\theoremstyle{plain}
\newtheorem{Thm}{Theorem}[section]
\newtheorem{Cor}[Thm]{Corollary}
\newtheorem{Lemma}[Thm]{Lemma}
\newtheorem{Obs}[Thm]{Observation}
\newtheorem{Prop}[Thm]{Proposition}
\theoremstyle{definition}
\newtheorem{Def}[Thm]{Definition}
\newtheorem{Rmk}[Thm]{Remark}
\newtheorem{Question}[Thm]{Question}
\theoremstyle{remark}
\newtheorem{Claim}[Thm]{Claim}
\numberwithin{equation}{section}
\newcommand\xleftrightarrow[2][]{%
  \ext@arrow 9999{\longleftrightarrowfill@}{#1}{#2}}
\newcommand\longleftrightarrowfill@{%
  \arrowfill@\leftarrow\relbar\rightarrow}
\newcommand{\R}{{\mathbb{R}}}
\newcommand{\N}{{\mathbb{N}}}
\newcommand{\Z}{{\mathbb{Z}}}
\newcommand{\C}{{\mathbb{C}}}
\newcommand{\PP}{{\mathcal{P}}}
\newcommand{\csalg}{$C^*$-algebra\xspace}
\newcommand{\csalgs}{$C^*$-algebras\xspace}
\newcommand{\vnalg}{von Neumann algebra\xspace}
\newcommand{\vnalgs}{von Neumann algebras\xspace}
\newcommand{\Ad}{\mathrm{Ad}}
\newcommand{\Matrix}{\mathbf{M}}
\newcommand{\Mn}{\Matrix_n}
\newcommand{\MnMn}{\Mn*\Mn}
\newcommand{\BH}{\mathcal{B}(\mathcal{H})}
\newcommand{\Hil}{\mathcal{H}}
\newcommand{\twoone}{\mathrm{II}_1}
\newcommand{\tr}{\mathrm{tr}}
\newcommand{\eps}{\varepsilon}
\newcommand{\restrict}[1]{\vert_{#1}}
\newcommand{\fromto}[2]{\colon #1 \rightarrow #2}
\newcommand{\Traces}[1]{\mathrm{T}\left(#1\right)}
\newcommand{\TracesP}[2]{\mathrm{T}_{#2}\left(#1\right)}
\newcommand{\Tr}[1]{\mathrm{Tr}\left(#1\right)}
\newcommand{\Ch}[1]{\mathrm{Ch}\left(#1\right)}
\newcommand{\relCh}[2]{\mathrm{Ch}_{#1}\left(#2\right)}
\newcommand{\relTr}[2]{\mathrm{Tr}_{#1}\left(#2\right)}
\title[The Space of Traces of  Free Products ]{The Space of Traces of the Free Group and Free Products of Matrix Algebras}
\author{Joav Orovitz, Raz Slutsky, Itamar Vigdorovich}
\date{}
\begin{document}

\maketitle

\begin{abstract}
We show that the space of traces of the free group $F_d$ on $2\leq d \leq \infty $ generators is a Poulsen simplex, i.e., every trace is a pointwise limit of extreme traces. This fails for many virtually free groups. The same result holds for free products of the form $C(X_1)*C(X_2)$ where $X_1$ and $X_2$ are compact metrizable spaces without isolated points. 
Using a similar strategy, we show that the space of traces of  the free product of matrix algebras $\Mn(\C) * \Mn(\C)$ is a Poulsen simplex as well, answering a question of Musat and R\o rdam for $n \geq 4$. Similar results are shown for certain faces of the simplices above, such as the face of finite-dimensional traces or amenable traces. 
\end{abstract}

\section{Introduction}
A \emph{trace} on a group $G$ is a 
 function $\varphi: G \rightarrow \C$ which is positive definite, conjugation invariant, and normalized so that $\varphi(e) = 1$. Traces which cannot be written as a proper convex combination of other traces are called \emph{characters}. 
The space of traces of $G$ is a compact convex set with the point-wise convergence topology.

Traces on groups have been widely studied in different names and contexts. For example, the space of characters of $G$ is equal to the  Pontryagin dual when $G$ is abelian and with the set of irreducible representations (up to equivalence) when $G$ is finite.
It is often referred to as the Thoma dual of $G$,
\cite{bekka2020unitary}. Notably, characters served as a fundamental tool in Thoma's celebrated classification of type I discrete groups \cite{thoma1964unitare}. It is thus of general interest to describe the Thoma dual of groups, a task which was achieved for several interesting classes of groups \cite{thoma1964unzerlegbaren,pukanszky1974characters,howe1977representations,bekka2007operator,bekka2020characters}.

The importance of characters goes beyond harmonic analysis, and in recent years their study has found remarkable applications in the structure of normal subgroups, unitary representations, hyperlinearity, stability, invariant random subgroups, uniform recurrent subgroups, as well as $C^*$-simplicity and the unique trace property\cite{bekka2007operator,boutonnet2021stationary,bader2022charmenability,hadwin2018stability,levit2022characters,hartman2017stationary,breuillard2017c}.

More generally, for  a separable unital \csalg $A$, we have the set  $\Traces{A}$ of all tracial states on $A$ with the weak-* topology. When $A=C^*(G)$ is the maximal \csalg of a group $G$, the tracial states on $A$ are in one-to-one correspondence with traces on the group $G$.
 The space $\Traces{A}$ carries a lot of information about the algebra and has long been known to be a very useful invariant of it. For example, it plays an important role in the attempt to classify \csalgs as part of Elliott's program \cite{rordam2002classification,winter2018structure,strung2021introduction},  and the recently settled Connes' Embedding Problem \cite{brown06, ozawa2013tsirelson, mip=re}.\\
 
One important property of the space of traces of a countable group, or of a separable unital \csalg, is that it is a metrizable Choquet simplex \cite[3.1.18]{sakai2012c}, that is, a compact convex set such that every point in it is the barycenter of a unique Borel probability measure supported on the set of  extreme points. We refer to \cite{phelps2001lectures} for an overview of Choquet theory.

In \cite{Poulsen},  Poulsen constructed a simplex with the counter-intuitive property that its set of extreme points is dense. A simplex with this property is called a \emph{Poulsen simplex}. Later, it was shown in \cite{los_78} that there is a unique Poulsen simplex; namely,  all  infinite metrizable Poulsen simplices are affinely homeomorphic. This simplex is universal in the sense that it admits any metrizable Choquet simplex as a face. Together with a homogeneity condition (see \cite{los_78}), this universal property characterizes the Poulsen simplex. Moreover, its set of extreme points is homeomorphic to a separable Banach space, e.g. $\ell^2$.

\subsection*{The free group}
The first result of this paper concerns free products of abelian groups
\begin{Thm}
\label{Main Theorem}
Let $G_1$ and $G_2$ be infinite countable abelian groups, and let $G_0$ be any countable group. Then the space of traces on $G=G_0*G_1*G_2$ is a Poulsen simplex. 
\end{Thm}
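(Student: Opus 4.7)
The plan is, given a trace $\varphi$ on $G$ along with a finite set $F \subset G$ and $\eps > 0$, to produce a character $\psi$ satisfying $|\psi(g) - \varphi(g)| < \eps$ for all $g \in F$. The strategy is to realize $\varphi$ via its GNS representation, embed the ambient algebra into a much larger tracial factor, and then perturb the pieces of the representation corresponding to $G_1$ and $G_2$ by conjugation with unitaries that are close to $1$ but free from the original data. Such perturbations move the trace very little on any fixed finite set, yet should eliminate central projections and thereby yield a factorial representation.

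Concretely, write $\varphi(g) = \tau(\pi(g))$ via GNS, where $\pi\fromto{G}{U(M)}$ and $(M,\tau)$ is a tracial von Neumann algebra; recall that $\varphi$ is a character iff $\pi(G)''$ is a factor. Form the reduced free product $(N, \tau_N) = (M, \tau) * (L(F_\infty), \tau_{\mathrm{reg}})$, which by Dykema-type results is a $\twoone$ factor trace-extending $M$. Pick bounded self-adjoints $h_1, h_2 \in L(F_\infty) \subset N$ that are free from $M$ and from each other, and for $t > 0$ set $u_i(t) := \exp(i t h_i)$. Define $\tilde\pi_t \fromto{G}{U(N)}$ by $\tilde\pi_t|_{G_0} = \pi|_{G_0}$ and $\tilde\pi_t(g) = u_i(t) \pi(g) u_i(t)^*$ for $g \in G_i$, $i = 1, 2$. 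Since $g \mapsto u \pi(g) u^*$ is a homomorphism whenever $\pi$ is, this gives a homomorphism on each free factor and hence extends to $G$ by universality. The associated trace $\tilde\varphi_t := \tau_N \circ \tilde\pi_t$ agrees with $\varphi$ on each of $G_0, G_1, G_2$ by trace cyclicity, and on mixed words $g$ the element $\tilde\pi_t(g)$ is norm-continuous in $t$, so $\tilde\varphi_t(g) \to \varphi(g)$ as $t \to 0^+$. Hence $|\tilde\varphi_t - \varphi| < \eps$ on $F$ for $t$ sufficiently small.

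The main obstacle is to prove that for some small $t > 0$ the representation $\tilde\pi_t$ is factorial. The heuristic is that, since $h_i$ is free from $M$, the perturbation $u_i(t)$ fails to commute with any nonscalar element of $M$ for $t > 0$, so conjugation by $u_i(t)$ displaces $A_i := \pi(G_i)''$ transversally inside $N$ and should obstruct any central projection of $\pi(G)''$ from remaining central in $\tilde\pi_t(G)''$. To make this rigorous, I would first reduce to the case that $A_1, A_2$ are diffuse abelian; this is where the hypothesis that $G_1, G_2$ are infinite abelian is crucial, since it permits regularizing the spectral measures of $\varphi|_{G_i}$ on $\widehat{G_i}$ to be atomless at the cost of a small perturbation of $\varphi$ on $F$. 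With $A_1, A_2$ diffuse abelian, a Voiculescu-style free-probabilistic argument—modeled on the theorem that the free product of two diffuse abelian tracial algebras is $L(F_2)$—should show that the join $A_0 \vee u_1(t) A_1 u_1(t)^* \vee u_2(t) A_2 u_2(t)^*$ has trivial center in $N$ for every $t > 0$. Carrying out this factoriality analysis uniformly in the perturbation parameter, and in particular in the limit $t \to 0^+$, is the hard technical step of the proof.
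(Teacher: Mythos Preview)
You correctly flag factoriality of $\tilde\pi_t(G)''$ as the crux, but the proposed mechanism has a structural gap, not a merely technical one. The Voiculescu-type argument you invoke needs $u_1(t) A_1 u_1(t)^*$ and $u_2(t) A_2 u_2(t)^*$ to be free (or at least in suitably generic position). The standard way to arrange this is conjugation by a \emph{Haar} unitary free from $M$, but your $u_i(t)=\exp(ith_i)$ satisfy $\tau_N(u_i(t))\to 1$ as $t\to 0^+$; they are far from Haar, and as $t\to 0^+$ the conjugated algebras collapse back to $A_1,A_2\subset M$, which are typically not free. No argument is given for factoriality at small $t>0$, and the heuristic about ``displacing central projections of $\pi(G)''$'' misses the point: the center of $\tilde\pi_t(G)''$ is a different object, and you would need to rule out \emph{its} nontrivial elements. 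The free factor $G_0$ compounds the issue: $\pi(G_0)''$ sits unchanged inside $\tilde\pi_t(G)''$, and the conjugated copies of $A_1,A_2$, being isomorphic to the originals, have no additional generating power to kill central elements coming from that side.

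The paper proceeds quite differently. It perturbs $\pi|_{G_i}$ so as to strictly \emph{enlarge} its WOT closure---precisely what conjugation can never do. Writing $C^*(G_i)\cong C(\widehat{G_i})$ and using that $\widehat{G_i}$ has no isolated points (this is where infinitude of $G_i$ enters), Lemma~\ref{lemma:unitary_packing} produces, via a Borel-isomorphism trick, a nearby representation of $C(\widehat{G_i})$ whose image generates any prescribed commutative von Neumann algebra containing the original one. The paper first reduces (Lemma~\ref{lem:approximating-sums-of-two-extreme-points}) to approximating $\tfrac12(\varphi_1+\varphi_2)$ with $\varphi_i$ extreme, works inside the concrete factor $\Mn(N_1\otimes N_2)$, and chooses the enlarged abelian algebras to contain explicit permutation matrices and a rank-one projection; an elementary argument (Lemma~\ref{generating_matrices}) then shows the perturbed image generates the whole factor.
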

Theorem \ref{Main Theorem} is an immediate consequence of  Theorem \ref{Thm:free products of commutative algebras} below. 
The most notable example is the free group $F_d$.
\begin{Cor} \label{Cor Free Group}
    The space of traces of a free group $F_d$ on $2\leq d\leq \infty $ generators is a Pouslen simplex.
\end{Cor}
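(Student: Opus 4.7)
The plan is to derive the corollary directly from Theorem \ref{Main Theorem} by writing the free group as a free product that exhibits two infinite cyclic factors. The key observation is that for every $2 \leq d \leq \infty$, the free group $F_d$ admits a decomposition of the form $G_0 * G_1 * G_2$ in which $G_1$ and $G_2$ are both infinite countable abelian groups, so that the hypotheses of Theorem \ref{Main Theorem} are met verbatim.

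Concretely, for $2 \leq d < \infty$, I would set $G_1 = G_2 = \Z$ and $G_0 = F_{d-2}$, with the convention that $F_0$ is the trivial group. Since $F_d$ is by definition the $d$-fold free product of $\Z$, we have the isomorphism $F_d \cong F_{d-2} * \Z * \Z$, which is exactly the form required by Theorem \ref{Main Theorem}. Because $\Z$ is infinite, countable, and abelian, the theorem applies and yields that $\Traces{F_d}$ is a Poulsen simplex. For $d = \infty$, I would use the analogous identification $F_\infty \cong F_\infty * \Z * \Z$, taking $G_0 = F_\infty$, $G_1 = G_2 = \Z$, and invoking Theorem \ref{Main Theorem} in the same way.

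There is no additional obstacle at the level of the corollary: all the genuine content lies in Theorem \ref{Main Theorem}, where the approximation of an arbitrary trace on $G_0 * G_1 * G_2$ by extreme traces has to be carried out. From the point of view of the corollary one only needs to recognize that $F_d$ already contains an explicit free product structure with two infinite abelian factors as soon as $d \geq 2$, and that this observation is insensitive to whether $d$ is finite or infinite.
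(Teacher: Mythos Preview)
Your proposal is correct and matches the paper's approach exactly: the paper simply notes that $F_d$ is the most notable instance of Theorem~\ref{Main Theorem}, and your explicit decomposition $F_d \cong F_{d-2} * \Z * \Z$ (with $G_0 = F_\infty$ in the infinite case) is precisely the way to see this.
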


Groups for which such results are far from being true are property (T) groups; indeed, the space of traces of such groups is a Bauer simplex, meaning that the set of extreme points is closed \cite{kennedy2022noncommutative,levit2023characterlimits}.  Moreover, the group $\operatorname{SL}_n(\Z)$ for $n\geq 3$, and more generally, lattices in simple Lie groups of rank at least $2$ are character-rigid, in the sense that aside from the Dirac character $\delta_e$ (assuming trivial centre for simplicity), any other character is of the form $\frac{1}{\dim\pi}\tr\circ \pi $ for some finite-dimensional representation $\pi$ \cite{bekka2007operator,peterson,boutonnet2021stationary} (see \cite{peterson2016character, bader2021charmenability, bader2022charmenability, bader-iti,lavi2023characters} for generalizations). This rigidity generalizes, for example, the Margulis normal subgroup theorem. 

When the real rank is $1$, character rigidity fails. Since any free group $F_d$ for $2\leq d <\infty$ is a lattice inside $\mathrm{SL}_2(\mathbb{R})$, Corollary \ref{Cor Free Group} may be viewed as a very strong negation of character rigidity. It moreover provides some description of the space of characters (topologically, at least) in a realm where, as pointed out in \cite[Remark 4]{bekka2007operator}, a true classification cannot be expected. 
More generally, many lattices in rank $1$ semi-simple Lie group surject onto the free group (see \cite{lubotzky1996free}) and therefore admit a closed face which is the Poulsen simplex. We get, in particular, the following corollary for fundamental groups of hyperbolic surfaces:
\begin{Cor}\label{cor: hyperbolic surfaces admit poulsen face}
  If $G$ is the fundamental group of a hyperbolic surface of finite volume, then the space of traces on $G$ admits a face which is a Poulsen simplex. 
\end{Cor}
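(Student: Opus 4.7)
The plan is to reduce Corollary \ref{cor: hyperbolic surfaces admit poulsen face} to the free-group case (Corollary \ref{Cor Free Group}) by exhibiting a surjection $\pi : G \twoheadrightarrow F_d$ for some $2 \le d < \infty$, and then realizing the pullback of $\Traces{F_d}$ as a closed face of $\Traces{G}$.

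Both regimes of the statement supply such a surjection. If the surface has at least one cusp, then it deformation retracts onto a finite graph of negative Euler characteristic, so $G$ is itself a non-abelian free group of finite rank and one may take $\pi = \Id$. If the surface is closed of genus $g \ge 2$, then $G$ has the standard presentation $\langle a_1, b_1, \ldots, a_g, b_g \mid \prod_{i=1}^{g} [a_i, b_i] \rangle$; killing $a_1, \ldots, a_g$ trivializes the defining relator and produces the quotient $F_g$ on the remaining $b_i$'s, with $g \ge 2$ (this is consistent with, and a special case of, \cite{lubotzky1996free}).

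Given such a $\pi$, I would define the pullback $\pi^{*} : \Traces{F_d} \to \Traces{G}$ by $\tau \mapsto \tau \circ \pi$. This is a continuous affine injection between compact metrizable convex sets, so it is an affine homeomorphism onto its image
\[
F = \{\varphi \in \Traces{G} : \varphi(k) = 1 \text{ for every } k \in \ker \pi \}.
\]
(The containment of the image in $F$ is obvious, while the reverse inclusion follows from a standard GNS argument: $\varphi(k) = 1 = \varphi(e)$ forces the GNS vector $\xi_{\varphi}$ to be fixed by $k$, whence $\varphi(gk) = \varphi(g)$ for all $g \in G$, so $\varphi$ descends to $G/\ker\pi = F_d$.)

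The crux of the argument is to verify that $F$ is a closed face of $\Traces{G}$. Closedness is immediate. For the face property, suppose $\varphi = \alpha\varphi_1 + (1-\alpha)\varphi_2 \in F$ with $\alpha \in (0,1)$ and $\varphi_i \in \Traces{G}$. For every $k \in \ker\pi$ the bound $|\varphi_i(k)| \le 1$, a consequence of positive-definiteness, combined with $1 = \alpha\varphi_1(k) + (1-\alpha)\varphi_2(k)$, forces $\varphi_i(k) = 1$, hence $\varphi_i \in F$. Since $\Traces{F_d}$ is a Poulsen simplex by Corollary \ref{Cor Free Group}, the affine homeomorphism $\pi^{*}$ transports the Poulsen property onto $F$, yielding a Poulsen face inside $\Traces{G}$ as required. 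No serious obstacle is foreseen; the only mildly subtle step is the extremality argument forcing $\varphi_i(k)=1$, which is essentially the statement that the extreme points of the closed unit disc in $\C$ are isolated from $1$ unless they equal $1$.
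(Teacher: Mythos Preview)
Your proposal is correct and follows essentially the same approach as the paper: the paper simply observes (in the sentence preceding the corollary) that surface groups surject onto a non-abelian free group and that this yields a closed face affinely homeomorphic to $\Traces{F_d}$, which is Poulsen by Corollary~\ref{Cor Free Group}. You have supplied the details the paper leaves implicit---namely the explicit surjections in the cusped and closed cases, and the verification that the pullback of $\Traces{F_d}$ under a surjective group homomorphism is a closed face---all of which are standard and correctly argued.
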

We leave open the question of whether the space of traces of a surface group is in itself a Poulsen simplex. However,  we do show that the space of traces of many virtually free groups is not a Poulsen simplex. 
\begin{Thm}[See Theorem \ref{thm:free product of ET is not Poulsen} for a generalization] \label{thm:intro amalgam not poulsen}
    The space of traces of an amalgamated free product of finite non-trivial groups is not a Poulsen simplex.
\end{Thm}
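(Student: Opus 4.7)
The strategy is to find a trace on $G$ that is not a pointwise limit of characters, by producing an evaluation map under which the image of the character set lies in a closed nowhere-dense subset of the image of the trace set.

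First, in detail for $G = \Z/2 * \Z/2 \cong D_\infty$, write $a, b$ for the generators of the two copies of $\Z/2$. The characters of $D_\infty$ comprise: (i) the four one-dimensional characters $\chi_{\pm\pm}$ factoring through the abelianization $(\Z/2)^2$, with $\chi_{\epsilon_1 \epsilon_2}(a) = \epsilon_1$ and $\chi_{\epsilon_1 \epsilon_2}(b) = \epsilon_2$; (ii) the family $\{\varphi_\theta\}_{\theta \in (0,\pi)}$ of two-dimensional characters arising from the irreducible unitary representations $\pi_\theta$ induced from the character $n \mapsto e^{in\theta}$ of the index-two cyclic subgroup $\langle ab \rangle$, in each of which both $\pi_\theta(a)$ and $\pi_\theta(b)$ are $2 \times 2$ reflections of trace $0$, so $\varphi_\theta(a) = \varphi_\theta(b) = 0$; and (iii) the regular trace $\delta_e$, which is a character because $D_\infty$ is ICC (and is the unique non-finite-dimensional one, since $L(D_\infty)$ is the hyperfinite $\twoone$ factor and has a unique trace). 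Consequently the joint evaluation $\varphi \mapsto (\varphi(a), \varphi(b))$ sends the set of characters into the finite set $S = \{(\pm 1, \pm 1),\, (0,0)\} \subset \R^2$. On the other hand, the trace $\tau := \tfrac{1}{2}\chi_{++} + \tfrac{1}{2}\chi_{+-}$ satisfies $(\tau(a), \tau(b)) = (1, 0) \notin S$. Since $S$ is closed and the evaluation map is continuous in the pointwise topology, $\tau$ is not a pointwise limit of characters.

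For the general case $G = A *_C B$ with $A, B$ finite and nontrivial, I would mimic the above by considering the joint restriction $\varphi \mapsto (\varphi|_A, \varphi|_B) \in \Traces{A} \times \Traces{B}$ and showing that its image on characters is a closed nowhere-dense subset of its image on traces. Since $A$ and $B$ are finite, $\Traces{A}$ and $\Traces{B}$ are finite-dimensional simplices, and the natural target is the subset of compatible pairs (agreeing on $C$); isolating a compatible pair that lies in the convex hull but not in the closure of character restrictions would yield the desired trace by convex combination of two characters.

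The main obstacle is verifying the nowhere-dense property in this generality. For $D_\infty$ the characters can be enumerated directly, but for richer groups such as $\mathrm{PSL}_2(\Z) = \Z/2 * \Z/3$, characters arising from the many finite quotients of $G$ produce many different restriction pairs, and ruling out density requires a structural argument. I expect this to hinge on the Bass-Serre tree action of $G$ (on a tree with finite vertex stabilizers, so $G$ is virtually free), providing intrinsic constraints on how a single character can distribute between the two factors.
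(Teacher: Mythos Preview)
Your $D_\infty$ computation contains a factual slip: $D_\infty$ is \emph{not} ICC (the conjugacy class of $(ab)^n$ is $\{(ab)^{\pm n}\}$), so $L(D_\infty)$ is not a factor and $\delta_e$ is not extreme. This is harmless, since $D_\infty$ is virtually abelian and hence type~I, so its characters are precisely the normalized traces of the irreducibles you already listed, and the evaluation argument goes through unchanged.

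The genuine gap is the general case, which is the actual content of the theorem. You acknowledge that verifying nowhere-density of the restriction image is the obstacle and gesture toward Bass--Serre theory, but give no mechanism; for groups like $\mathrm{PSL}_2(\Z)$ the restriction pairs coming from finite quotients are genuinely abundant, and there is no obvious reason your proposed target set should be nowhere dense. The paper bypasses this entirely by proving something simpler and sharper: the trivial character $1_G$ is \emph{isolated} in $\Ch{G}$. Suppose $\varphi_n \in \Ch{G}$ with $\varphi_n \to 1_G$. Restricting to each finite factor $G_i$ and decomposing over $\Ch{G_i}$, the mass at $1_{G_i}$ must tend to $1$ (as $1_{G_i}$ is isolated among the finitely many characters of $G_i$). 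For large $n$, in the GNS factor representation of $\varphi_n$, the projection $p_i$ onto $G_i$-fixed vectors therefore has trace exceeding $\tfrac12$; Kaplansky's formula gives $\tau(p_1 \wedge p_2) > 0$, and $p_1 \wedge p_2$, being the projection onto $G$-fixed vectors, is central --- so in a factor it equals $1$, forcing $\varphi_n = 1_G$. Once $1_G$ is isolated, $\Ch{G}$ is disconnected; since the extreme boundary of a metrizable Poulsen simplex is homeomorphic to $\ell^2$ and in particular connected, $\Tr{G}$ cannot be Poulsen. This argument needs no classification of characters and passes to amalgams $G_1 *_H G_2$ via the quotient map from $G_1 * G_2$.
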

This shows for example that the trace space of the groups $\mathrm{SL}_2(\Z)$ and $\mathrm{PSL}_2(\Z)$ is not a Poulsen simplex.\footnote{
This is because  $\mathrm{SL}_2(\Z) \cong \Z / 4\Z *_{\Z / 2 \Z} \Z / 6\Z$ and $\mathrm{PSL}_2(\Z) \cong \Z / 2 \Z * \Z / 3\Z$.
}
The statement of Theorem \ref{thm:intro amalgam not poulsen} holds more generally for amalgamated products of groups with Kazhdan's Property (T). In \S\ref{sec:obstructions} we define a property we call Property (ET), which  is a natural generalization of Property (T), but in contrast, it \emph{is} preserved when taking arbitrary amalgamated products, and in addition, it is \emph{not} preserved when taking finite index subgroups. This property is employed to formulate obstructions preventing groups from having a Poulsen trace space, such as the obstruction given in Theorem \ref{thm:intro amalgam not poulsen}.\\

Certain similar results to Corollary \ref{Cor Free Group} were established previously.
In \cite{Yoshizawa}, the compact convex set of normalized positive definite functions on the free group $F_d$ is studied, and it is shown that the extreme points are dense. Note that this set coincides with the set of states on $C^*(F_d)$, which is \emph{not} a simplex. See also \cite{choi80, lubotzky_shalom}.
Another related result concerns invariant random subgroups (IRSs) coined in \cite{abert2014kesten}; see \cite{gelander2015lecture} for an introduction. In \cite{bowen15}, it is shown that the space of IRSs of the free group which have no finite index subgroups in their support is a Poulsen simplex.
Every IRS on a discrete group gives rise to a trace on it in a natural way, and so Corollary \ref{Cor Free Group} shows that some  properties of the space of IRSs are retained by the space of traces on the free group.

Finally, we note that Corollary \ref{Cor Free Group} implies the same result for the simplex of traces of $C^*(F_n)$. It is known that the maximal $C^*$-algebras of free groups on a different number of generators are non-isomorphic, as they have different $K$-theories \cite{cuntz1982k}; Corollary \ref{Cor Free Group} shows on the other hand that they cannot be distinguished by their trace simplex invariant. 

\subsection*{Free products of commutative \csalgs} For any two unital \csalgs $A_1$ and $A_2$ one can consider their unital universal  free product denoted\footnote{This is often denoted in the literature by $A_1*_\C A_2$} by $A=A_1*A_2$. This is the co-product in the category of unital \csalgs, as defined in Chapter 1 of \cite{voiculescu1992free}. 
\begin{Thm} \label{Thm:free products of commutative algebras}
    Let $X_1$ and $X_2$ be two compact metrizable spaces with no isolated points and let $A_1 = C(X_1), A_2 = C(X_2)$ be the corresponding \csalgs of continuous functions. Consider the free product $A=A_0*A_1*A_2$ where $A_0$ is any unital separable \csalg. Then the space of traces of $A$ is a Poulsen simplex.
\end{Thm}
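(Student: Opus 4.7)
The plan is to show density of the extreme points of $\Traces{A}$, using the characterization that $\tau$ is extreme if and only if its GNS representation generates a factor. Concretely, the main technical goal is: given $\tau \in \Traces{A}$, a finite set $F \subseteq A$, and $\eps > 0$, construct an extreme $\psi \in \Traces{A}$ with $|\psi(a) - \tau(a)| < \eps$ for all $a \in F$. Every element of $F$ will be expanded as a finite linear combination of reduced alternating words in $A_0$, $A_1$, $A_2$, so the approximation is really a constraint on finitely many mixed moments.

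The first step is to realize $\tau$ via its GNS tracial von Neumann algebra $(M,\tau)$, so that $A \hookrightarrow M$ via $\pi_\tau$, with subalgebras $M_i = \pi_\tau(A_i)''$. The hypothesis that $X_1, X_2$ have no isolated points enters in two related ways: non-atomic Borel probability measures on $X_i$ are weak-$*$ dense among all probability measures, so $\tau|_{A_i}$ can be approximated by diffuse traces on $A_i$; and each $A_i$ thereby contains, after such an approximation, unitaries of diffuse spectrum whose distribution under the approximating trace can be made Haar-like on the unit circle.

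The construction of $\psi$ then proceeds by free-probability amplification. Enlarge $M$ to $\tilde M = M * L(F_\infty)$, the tracial free product with a free group factor, and inside $\tilde M$ deform the embedding of (say) the $A_2$-component by conjugating with a one-parameter family of unitaries $u_t \in \tilde M$ joining $u_0 = 1$ to $u_1$ a free Haar unitary in $L(F_\infty)$. This yields $*$-homomorphisms $\rho_t \colon A \to \tilde M$ and traces $\psi_t := \tilde\tau \circ \rho_t \in \Traces{A}$ with $\psi_0 = \tau$, varying continuously in $t$. For $t$ small, closeness to $\tau$ on $F$ is automatic; for $t$ positive and suitably chosen, the free twist injects enough free-independent randomness into the embedding that $\rho_t(A)''$ becomes a factor, via standard free-product factoriality results of Voiculescu--Dykema--Popa type, and hence $\psi_t$ is extreme.

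The main obstacle lies in guaranteeing factoriality of $\rho_t(A)''$ simultaneously with the closeness condition on $F$ for the \emph{same} value of $t$: a small perturbation must already suffice to trivialize the centre, which is precisely where the no-isolated-points hypothesis does real work, by supplying enough diffuse spectral structure inside $A_1, A_2$ so that a small free twist genuinely propagates into a nontrivial perturbation of all relative commutants. An alternative, potentially cleaner route is to bypass the interpolation entirely and directly build $\psi$ as an amalgamated free product trace of the form $\tau|_{A_0} * \tau_1' * \tau_2'$, with $\tau_i'$ chosen diffuse on $A_i$ and matching $\tau$ moment-by-moment on a sufficiently large finite family of words; factoriality of free products of diffuse tracial algebras then secures extremality, and the difficulty shifts to achieving the moment matching on the mixed alternating words appearing in $F$, which is exactly what the density of diffuse measures on $X_1, X_2$ affords.
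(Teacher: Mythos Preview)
Both routes you sketch have a genuine gap at the decisive step.

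In the deformation approach, you correctly isolate the obstacle: you need a single $t$ that is simultaneously small enough for the moment estimate on $F$ and such that $\rho_t(A)''$ is a factor. Conjugating $\pi(A_2)$ by a unitary $u_t$ close to $1$ does not put $\pi(A_1)$ and $u_t\pi(A_2)u_t^*$ anywhere near free position, so the Voiculescu--Dykema--Popa factoriality results do not apply; and there is no mechanism offered for why a \emph{small} perturbation trivializes the centre. The appeal to ``diffuse spectral structure'' coming from the no-isolated-points hypothesis is suggestive but does not constitute an argument: the centre of $\pi_\tau(A)''$ can be large (e.g.\ when $\tau$ is a nontrivial convex combination), and a trace-norm-small perturbation of the generators need not kill it.

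The alternative route fails for a more basic reason. A free product trace $\tau|_{A_0} * \tau_1' * \tau_2'$ has its mixed moments \emph{completely determined} by the marginals and freeness: alternating centred words have trace zero. You therefore have no freedom whatsoever to match the mixed moments appearing in $F$ unless $\tau$ was already (close to) a free product trace. Density of diffuse measures on $X_i$ only lets you adjust the marginals $\tau_i'$, not the cross-moments.

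The paper proceeds quite differently. It first reduces, via a convexity lemma, to approximating $\tfrac12(\varphi_1+\varphi_2)$ with $\varphi_1,\varphi_2$ extreme; this is a substantial simplification you are not using. It then represents $\varphi$ concretely inside the finite factor $\Mn(N_1\otimes N_2)$, where $N_i=\pi_{\varphi_i}(A)''$, as a block-diagonal representation. The no-isolated-points hypothesis is exploited not via free probability but through a Borel-isomorphism perturbation lemma: any representation of $C(X)$ into a commutative von Neumann algebra $M$ can be approximated in operator norm by one whose image generates $M$. This lets one slip permutation matrices and a rank-one projection into the image of $C(X_1)$ and $C(X_2)$ with arbitrarily small error, after which a direct computation shows the perturbed image generates all of $\Mn(N_1\otimes N_2)$.
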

The proof of Theorem \ref{Thm:free products of commutative algebras} is given in \S\ref{sec:free group}. We note that for groups $G_1$ and $G_2$, it holds that $C^*(G_1)*C^*(G_2)=C^*(G_1*G_2)$ \cite[Proposition 1.4.3]{voiculescu1992free}.
Theorem \ref{Main Theorem} therefore follows from Theorem \ref{Thm:free products of commutative algebras} since the \csalg of an abelian group is the algebra of continuous functions on its Pontryagin dual. In the case where $G_1$ and $G_2$ are infinite discrete abelian, then $A_1=C(\hat{G}_1)$, $A_2=C(\widehat{G}_2)$ both satisfy the assumptions in Theorem \ref{Thm:free products of commutative algebras}.

We note that contrary to group \csalgs or to commutative \csalgs, $A_0$ may admit no traces at all. In such case the space of traces of $A$ is empty, and may be regarded as a Poulsen simplex in a vacuous sense. In any other case, this simplex is infinite.

\subsection*{Free products of matrix algebras}
It is interesting to consider also free products of non-commutative \csalgs, such as the algebra of $n$-by-$n$ complex matrices $\Mn=\Mn(\C)$. Note that while a commutative \csalgs admits an abundance of traces, $\Mn$ has a unique trace.  

The second part of this paper, therefore, deals with the trace simplex of the free product $C^*$-algebra $\MnMn$. This algebra was studied extensively. It was shown to be semi-projective in \cite{blackadar1985shape} and residually finite-dimensional in \cite{exel1992finite}. 
Most recently, Musat and R\o rdam \cite{musat2021factorizable} studied this algebra and related its trace simplex to factorizable quantum channels on $\Mn$. In that paper, they relate questions regarding approximation of traces and Connes' embedding problem to factorizable channels, in the spirit of Tsirelson's conjecture and quantum correlation matrices. In addition, they show that the Poulsen simplex is a face of the trace space of $\MnMn$ for $n\geq 3$, and ask whether this trace space is in itself a Poulsen simplex. Our second theorem answers this question positively for $n\geq 4$. 
\begin{Thm}\label{Second Theorem}
The trace simplex of the free product $\Mn(\C) * \Mn(\C)$ is a Poulsen simplex for $n\geq 4$. 
\end{Thm}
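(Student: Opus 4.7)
The plan is to adapt the approach used for Theorem \ref{Thm:free products of commutative algebras} to the matrix setting. Via the GNS construction, a trace $\tau$ on $A := \Mn(\C) * \Mn(\C)$ corresponds to a pair of unital $*$-homomorphisms $\phi_1, \phi_2 \colon \Mn(\C) \to (M, \tau_M)$ jointly generating a tracial von Neumann algebra $(M, \tau_M)$, and $\tau$ is a character (extreme trace) precisely when $M$ is a factor. Two such data yield the same trace exactly when they are intertwined by a trace-preserving isomorphism of $M$, so we can freely replace $(M, \phi_1, \phi_2)$ by any equivalent realization.

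Given such $\tau$, a finite set $F$ of alternating words in $A$, and $\eps > 0$, the goal is to produce a character $\tau'$ with $|\tau(w) - \tau'(w)| < \eps$ for every $w \in F$. I would amplify $M$ by a free copy of $L(\Z)$: set $\widetilde{M} := M * L(\Z)$ (the tracial free product) and let $v \in L(\Z) \hookrightarrow \widetilde{M}$ be a Haar unitary, which is automatically free from $M$. Using the spectral picture $L(\Z) \cong L^\infty(\T)$, choose a continuous path $u(t)$ for $t \in [0,1]$ of unitaries in $L(\Z)$ with $u(0) = 1$ and $u(1) = v$ (say $u(t) := \exp(it \log v)$ for a measurable branch of the logarithm). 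Consider the perturbed pair $(\phi_1, \Ad(u(t)) \circ \phi_2)$ and let $\tau_t$ be the trace on $A$ it induces in $\widetilde{M}$. Then $\tau_0 = \tau$ and $t \mapsto \tau_t(w)$ is continuous for every word $w$, so it suffices to show $\tau_t$ is a character for a dense set of parameters $t \in (0, 1]$.

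This reduces the theorem to a factoriality statement for $N_t := \phi_1(\Mn(\C)) \vee u(t)\phi_2(\Mn(\C))u(t)^* \subseteq \widetilde{M}$. At the endpoint $t = 1$, $v\phi_2(\Mn(\C))v^*$ is free from $\phi_1(\Mn(\C))$, so $N_1$ is the reduced free product of two copies of $\Mn(\C)$, which is a factor by Dykema--R\u adulescu. The main obstacle is to propagate factoriality to a dense set of small parameters $t$, which would make $\tau_t$ a nearby character. This is where the hypothesis $n \geq 4$ is expected to enter: a combinatorial matrix-unit argument should show that the relative commutant $\phi_1(\Mn(\C))' \cap N_t$ collapses to $\C$ for generic $t$, refining the "Poulsen as a face" construction of Musat and R\o rdam (valid from $n \geq 3$) by using the extra combinatorial room that opens up once $n \geq 4$. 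Once factoriality is secured on a dense set of $t$, continuity of $t \mapsto \tau_t$ in the pointwise topology completes the approximation of $\tau$ by characters, establishing that $\Traces{A}$ is a Poulsen simplex.
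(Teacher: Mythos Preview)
Your outline leaves the decisive step unproved. Everything hinges on showing that $N_t = W^*\bigl(\phi_1(\Mn),\,u(t)\phi_2(\Mn)u(t)^*\bigr)$ is a factor for parameters $t$ arbitrarily close to $0$, yet you only assert that ``a combinatorial matrix-unit argument should show'' it, without indicating what that argument is or how $n\ge 4$ enters. Factoriality at the endpoint $t=1$ is irrelevant to the approximation problem: $\tau_1$ is the single free-product character $\tr_n*\tr_n$, not the trace $\tau=\tau_0$ you are trying to approximate, so what you actually need is factoriality as $t\to 0^+$. There the unitary $u(t)=\exp(it\log v)$ has spectrum a proper arc and is \emph{not} centered (one computes $\tau(u(t))=(e^{2\pi i t}-1)/(2\pi i t)\neq 0$ for $0<t<1$), hence $u(t)\phi_2(\Mn)u(t)^*$ is no longer free from $\phi_1(\Mn)$ and the structure of $N_t$ is opaque. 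When the original $M$ has nontrivial center---precisely the case one must handle---nothing in your sketch rules out a nontrivial central element in $N_t$ for small $t$, nor do you explain why $N_t$ should even contain $u(t)$ or $M$. As written, the proposal is a strategy whose hard part is deferred to an unspecified argument.

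The paper proceeds along a completely different, constructive route. It first reduces via Lemma~\ref{lem:approximating-sums-of-two-extreme-points} to midpoints $\tfrac12(\varphi_1+\varphi_2)$ of extreme traces, embeds the direct-sum representation into the explicit tensor factor $\tilde M=\Mn\otimes\tilde N_1\otimes\tilde N_2\otimes\Matrix_2$ (after a dimension-amplification step for the finite-dimensional case, handled in \S\ref{section:findim}), and then perturbs the images of the generators by hand: $\tilde\pi(e_{12})$, $\tilde\pi(e_{13})$, $\tilde\pi(e_{14})$ each carry a different small-trace modification (partial isometries linking $N_i$ to its amplification, a flip between the two $\Matrix_2$-blocks, and distinguishing eigenvalues $\lambda_1,\lambda_2$, respectively). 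The hypothesis $n\ge 4$ appears concretely as the need for three independent off-diagonal generators $e_{12},e_{13},e_{14}$ to host these three perturbations. A chain of explicit claims then verifies $\tilde\pi(A)''=\tilde M$. There is no deformation or genericity argument; factoriality is obtained by direct computation.
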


The same theorem holds, with essentially the same proof, for the free product of more than $2$ copies of $\Mn$. While in the case $n=3$ some technical difficulties arise in our method, we believe that the result remains true and that a similar strategy could be used. The proof of Theorem \ref{Second Theorem} is given in \S\ref{sec:matrix algebras}. 
The amalgamated product $\Mn*_D\Mn$, where $D\subseteq \Mn$ is the subalgebra of diagonal matrices, plays a special role in the proof. This is inspired by ideas outlined in \cite{musat2021factorizable}. In fact, we prove that the trace space of $\Mn*_D\Mn$ is also a Poulsen simplex; interestingly, this is a consequence of Corollary \ref{Cor Free Group} on the free group - see Corollary \ref{cor:amalgamated poulsen}.\\

Let us now quickly recall the connection Musat and R\o rdam drew between traces on $\MnMn$ and quantum information theory. 
A linear map $T: \Mn(\C) \rightarrow \Mn(\C)$ is called a \textit{quantum channel} if it is unital, completely-positive and trace-preserving. Such maps model communication in quantum information. A \emph{factorization} of $T$ is a tracial von Neumann algebra $(M, \tau)$, along with two unital $*$-homomorphisms $\alpha, \beta: \Mn(\C) \rightarrow M$ such that $T = \beta^* \circ \alpha$; here $\beta^*: M \rightarrow \Mn(\C)$ is the adjoint of $\beta$ defined by $\langle \beta(x), y \rangle_{\tau} = \langle x, \beta^*(y) \rangle_{\operatorname{tr}_n}$ for all $x \in M, y \in \Mn(\C)$. See \cite{haagerup2011factorization} for a thorough introduction.
A quantum channel admitting a factorization is called \emph{factorizable}. These maps were extensively studied by Haagerup and Musat in \cite{haagerup2011factorization}, where they show that there exist non-factorizable channels for every $n \geq 3$, and deduce that the so-called \textit{asymptotic quantum Birkhoff conjecture} is false. In \cite{musat2020non}, factorizable maps which cannot factor through a finite-dimensional algebra are constructed for $n \geq 11$. 

The compact convex space $\mathcal{FM}(n)$ of all factorizable channels on $\Mn(\C)$ is studied in \cite{musat2021factorizable}, and is shown to be parameterized by traces on $\MnMn$.
 In this light, it is interesting to study the implications of Theorem \ref{Second Theorem} on factorizable channels. Let us illustrate one straightforward consequence. Suppose that a quantum channel $T$ admits a factorization through a tracial von Neumann algebra $M$ via maps $\alpha,\beta:\Mn\to M$. Call this factorization   \emph{surjective} if $\left(\alpha(\Mn)\cup \beta(\Mn)\right)''=M$.   
We deduce the following corollary of Theorem \ref{Second Theorem} (see \S\ref{subsec:quantum} for details).

\begin{Cor}\label{cor:quantum-channels}
The space of quantum channels which factorize surjectively through a finite factor is dense in $\mathcal{FM}(n)$. 
\end{Cor}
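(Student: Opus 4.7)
The plan is to run the Poulsen density of extreme traces on $\Mn*\Mn$ (Theorem \ref{Second Theorem}) through the parameterization of $\mathcal{FM}(n)$ by $\Traces{\Mn*\Mn}$ established in \cite{musat2021factorizable}, and to identify surjective factorizations through finite factors with extreme traces via the GNS construction.

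\textbf{Step 1: Recall the parameterization.} A trace $\tau\in\Traces{\Mn*\Mn}$ gives, via GNS, a tracial von Neumann algebra $(M_\tau,\bar\tau)$ with $M_\tau=\pi_\tau(\Mn*\Mn)''$ and two unital $*$-homomorphisms $\alpha_\tau,\beta_\tau\fromto{\Mn}{M_\tau}$ obtained by restricting $\pi_\tau$ to the two free factors. The associated factorizable channel is $T_\tau=\beta_\tau^*\circ\alpha_\tau$. By \cite{musat2021factorizable}, the assignment $\tau\mapsto T_\tau$ is an affine continuous surjection from $\Traces{\Mn*\Mn}$ onto $\mathcal{FM}(n)$, where the latter is equipped with its natural compact convex topology (pointwise convergence in the Hilbert--Schmidt norm on $\Mn$, say). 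I would spend one sentence noting that continuity of this map follows from the fact that $\Tr(y^*T_\tau(x))$ is determined by $\tau$ evaluated on fixed words of length two in $\Mn*\Mn$, and hence depends weak-$*$ continuously on $\tau$.

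\textbf{Step 2: Extreme traces correspond to surjective factorizations through finite factors.} If $\tau$ is extreme in $\Traces{\Mn*\Mn}$, then the GNS von Neumann algebra $M_\tau$ is a factor and carries the faithful trace $\bar\tau$, so it is a \emph{finite} factor (type $\mathrm{I}_k$ or $\mathrm{II}_1$). Moreover by construction $(\alpha_\tau(\Mn)\cup\beta_\tau(\Mn))''=\pi_\tau(\Mn*\Mn)''=M_\tau$, so the factorization of $T_\tau$ through $(M_\tau,\bar\tau)$ is surjective in the sense of the statement. I would spell out the converse direction too: any surjective factorization of $T$ through a finite factor $(M,\tau_M)$ yields a trace on $\Mn*\Mn$ (the pullback of $\tau_M$ under $\alpha*\beta$) which is extreme, because factoriality of $M$ combined with surjectivity forces the GNS representation to have trivial centre.

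\textbf{Step 3: Apply the Poulsen property.} By Theorem \ref{Second Theorem}, the extreme points $\partial_e\Traces{\Mn*\Mn}$ are dense in $\Traces{\Mn*\Mn}$ for $n\geq 4$. Since $\tau\mapsto T_\tau$ is continuous and surjective onto $\mathcal{FM}(n)$, the image of $\partial_e\Traces{\Mn*\Mn}$ is dense in $\mathcal{FM}(n)$. By Step 2, this image consists precisely of those channels factorizing surjectively through a finite factor, proving the corollary.

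\textbf{Expected obstacle.} The only nontrivial point is Step 2: verifying that the GNS prescription automatically produces a \emph{surjective} factorization and that surjectivity together with extremality of the trace are equivalent to factoriality of the ambient tracial von Neumann algebra. This is conceptually standard but must be stated carefully because the Musat--R\o rdam parameterization does not, a priori, require surjective factorizations, and the topology on $\mathcal{FM}(n)$ (as opposed to the simplex of traces) does not see the ambient algebra; one should therefore be explicit that a single channel $T$ may arise from many different traces, and that the density assertion concerns the existence of \emph{some} extreme-trace preimage converging to a given trace preimage of $T$.
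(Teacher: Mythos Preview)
Your proposal is correct and follows essentially the same approach as the paper: pick a preimage of $T$ under the continuous surjection $\Phi\colon\Traces{\Mn*\Mn}\to\mathcal{FM}(n)$, approximate it by extreme traces using Theorem~\ref{Second Theorem}, and note that the GNS data of an extreme trace gives a surjective factorization through a finite factor. The paper's proof is slightly terser and omits the converse direction of your Step~2, which is not needed for the density statement.
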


\subsection*{Finite-dimensional traces and other faces}
Theorems \ref{Thm:free products of commutative algebras} and \ref{Second Theorem} concern the approximation of traces by extreme traces. A close look at the proofs shows that the constructed approximating extreme traces share many properties with the trace that they are meant to approximate. In \S\ref{sec:faces} we consider certain faces of the space of traces and show that they are Poulsen simplices as well. In particular, we obtain the following:
\begin{Thm}\label{thm:intro-faces-poulsen}
    Let $A=A_1*A_2$ where either $A_1$ and $A_2$ are \csalgs of continuous functions on a compact metrizable space without isolated points, or $A_1=A_2=\Mn$ with $n\geq 4$. Consider the (Poulsen) simplex of traces $\Traces{A}$. Then the closure of the set of all finite-dimensional traces is a face of $\Traces{A}$ and is a Poulsen simplex as well.  The same is true if `finite-dimensional' is replaced with `amenable', `uniformly amenable' or `Property (T)'. 
\end{Thm}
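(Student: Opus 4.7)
Denote by $\mathcal F_P$ the closure in $\Traces{A}$ of the set of traces with property $P\in\{\text{FD, amenable, uniformly amenable, Property (T)}\}$. My plan is to establish two separate facts: (1) $\mathcal F_P$ is a face of $\Traces{A}$, and (2) the extreme traces of $\Traces{A}$ lying in $\mathcal F_P$ are dense in $\mathcal F_P$. Since a closed face of a metrizable Choquet simplex is again a metrizable Choquet simplex, (1) together with (2) yields the Poulsen property for $\mathcal F_P$.

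\emph{Face property.} For $P=$ amenable and $P=$ uniformly amenable, that $\mathcal F_P$ is a closed face of the trace simplex of any separable unital \csalg is standard and goes back to work of Ozawa and to Brown's memoir on amenable traces and quasidiagonality. The set of finite-dimensional traces is contained in the uniformly amenable face and is convex; combining this with Choquet-uniqueness on the ambient simplex and the observation that every FD trace decomposes as an integral of extreme FD traces of the form $\tr_n\circ\pi$ with $\pi$ irreducible, one deduces that $\mathcal F_{\mathrm{FD}}$ is a face. For Property (T), the face property is obtained by disintegrating a (T)-trace along its unique Choquet measure and verifying that Property (T) of the ambient GNS \vnalg\ passes to almost every extreme fibre in the disintegration.

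\emph{Poulsen density.} Here the plan is to reopen the proofs of Theorems \ref{Thm:free products of commutative algebras} and \ref{Second Theorem} and verify that the approximating extreme traces produced there can be chosen inside $\mathcal F_P$. In both proofs, given $\tau\in\Traces{A}$ with GNS \vnalg\ $M=\pi_\tau(A)''$, the perturbed extreme trace $\tau'$ is obtained by modifying the embeddings $\pi_\tau|_{A_i}\colon A_i\to M$ of the free factors \emph{within $M$ itself} — via generic inner unitaries in the commutative case, and via a free-Haar-type perturbation over the diagonal subalgebra $D$ in the matrix case (the auxiliary algebra $\Mn*_D\Mn$, whose trace simplex is Poulsen by Corollary \ref{cor:amalgamated poulsen}, intervenes in the latter). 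Consequently, the GNS \vnalg\ of $\tau'$ sits as a tracial subalgebra of $M$ (or of a mild amplification that still inherits $P$). Since each $P$ passes from $M$ to such subalgebras — trivially for FD and (uniform) amenability, and via the trace-preserving conditional expectation in the (T) case — we obtain $\tau'\in\mathcal F_P$ whenever $\tau\in\mathcal F_P$, which is the required density.

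\emph{Main obstacle.} The delicate case is Property (T): unlike amenability, Connes--Jones Property (T) for a \twoone\ factor does not automatically descend to arbitrary subfactors. Closing the argument requires verifying that a Kazhdan pair $(F,\varepsilon)$ for $M$ restricts to a Kazhdan pair for the subalgebra generated by the perturbed embeddings of $A_1$ and $A_2$, using the trace-preserving expectation available in the tracial setting. A secondary care point, again in Theorem \ref{Second Theorem}, is ensuring that the amalgamated free product $\Mn*_D\Mn$ appearing in the construction preserves $P$: this is automatic for (uniform) amenability and FD (since $D$ and $\Mn$ are finite-dimensional), while for (T) it requires a separate verification that the $D$-amalgamation does not destroy the Kazhdan property of $M$.
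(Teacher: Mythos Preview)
Your proposal has a genuine gap in the \emph{Poulsen density} step, stemming from a misreading of what the constructions in Theorems \ref{Thm:free products of commutative algebras} and \ref{Second Theorem} actually produce. You write that the GNS \vnalg\ of the approximating extreme trace $\tau'$ ``sits as a tracial subalgebra of $M$'', and then attempt to pass property $P$ \emph{downward} from $M$ to this subalgebra. But the constructions do the opposite: starting from two \emph{extreme} traces $\varphi_1,\varphi_2$ with factors $N_1,N_2$, one first builds the ambient factor $\tilde M$ (an amplification of $N_1\otimes N_2$, tensored with a matrix algebra) and then perturbs so that $\tilde\pi(A)''=\tilde M$ \emph{exactly}. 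The von Neumann algebra of the approximant is thus \emph{equal} to an amplification of $N_1\otimes N_2$, not a subalgebra of anything. Consequently the relevant closure conditions on $P$ are stability under \emph{tensor products} and \emph{amplifications}, not under passage to subalgebras. This is precisely why Property~(T) goes through with no extra work --- it is preserved under both operations --- whereas your subalgebra argument runs into the well-known obstacle you yourself flag.

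A second, related issue is that you attempt to approximate an \emph{arbitrary} $\tau\in\mathcal F_P$ by perturbing within its own GNS algebra. The paper never does this; it only knows how to approximate midpoints $\frac12(\varphi_1+\varphi_2)$ of extreme traces (Lemma~\ref{lem:approximating-sums-of-two-extreme-points}), and then applies this lemma \emph{inside the face} $\overline{\TracesP{A}{\PP}}$. For that one must check that the produced approximant lies back in the face, which follows immediately once one knows its factor is an amplification of $N_1\otimes N_2$ and $P$ is stable under these operations. Finally, your face argument is unnecessarily intricate: the paper defines $\TracesP{A}{\PP}$ as the convex hull of a set of extreme points of $\Traces{A}$, and the closed convex hull of any collection of extreme points in a metrizable simplex is automatically a closed face. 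No disintegration or property-specific analysis is required.
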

Other properties which are covered are McDuff, Haagerup, Full, Properly proximal, Gamma, and more. We refer to \S\ref{sec:faces} for precise definitions and statements including Corollary \ref{cor:Poulsen-faces} which is a generalization of Theorem \ref{thm:intro-faces-poulsen}.

Specifically for the closure of finite-dimensional traces a more algebro-geometric approach can be adopted, and certain stronger statements for this face can be obtained, see Proposition \ref{prop:finite-dimensional amplification}. It in particular follows that the closure of the set of finite-dimensional traces of $\Matrix_3(\C)*\Matrix_3(\C)$ is a Poulsen simplex as well. Such analysis is performed in \S\ref{section:findim}, and includes an amplification of dimension argument which is crucial for the proof of Theorem \ref{Second Theorem}. 

Questions regarding finite-dimensional states and traces have been studied extensively. For example, it was shown in \cite{choi80, exel1992finite} that the finite-dimensional states on the algebras $C^*(F_d)$ and $\MnMn$ are dense in the space of all states.
This remains true for the free group even when restricting to positive definite functions that factor through a finite quotient \cite{lubotzky_shalom}. 
In light of these results, and Theorem \ref{thm:intro-faces-poulsen}, one might wonder if our main results can be improved by showing that the finite-dimensional traces are dense in the space of all traces. The answer to this question is quite interestingly no. For certain algebras, among them are $C^*(F_d)$ and $\MnMn$, the density of the finite-dimensional traces is equivalent to the Connes embedding problem
\cite[Sec. 4.3]{brown06},\cite[Sec. 4.1]{musat2021factorizable}, which was recently refuted in \cite{mip=re}. See also \cite{hadwin2018stability,hadwin2018tracial,musat2020non} for other interesting aspects of the face of finite-dimensional traces and its closure.\\

Going back to the connection between IRSs and traces, the analogous question for IRSs is whether every IRS of the free group is \emph{co-sofic}, that is, whether the IRSs supported on finite index subgroups are weak-* dense in the simplex of all IRSs. This problem is referred to as the Aldous–Lyons conjecture \cite{aldous_lyons}, and it remains open. An affirmative answer for it would imply that all groups are sofic. 

\subsection*{Concluding remarks}
Our results make use of a simple strategy, and use the well known fact that a trace $\varphi$ is extreme if and only if the associated \vnalg $\pi_\varphi (A)^{\prime\prime}$ obtained by the GNS construction is a factor (cf. Theorem \ref{thm:extremal-factor}).
Given a trace $\varphi$ on $A$ which is a convex combination of extreme traces, we first define an embedding of $\pi_\varphi (A)^{\prime\prime}$ into a finite factor $M$ such that $\tau_M \circ \pi_\varphi = \varphi$, where $\tau_M$ is the unique trace on $M$.
We then make small perturbations to the representation $\pi_\varphi$ of $A$ to obtain a representation $\tilde{\pi}:A\to M$, which is close to $\pi_\varphi$ in the trace norm, and such that $\tilde{\pi}(A)^{\prime\prime} = M$.
Most of the work is done towards making the required perturbations to $\pi_\varphi$. 

We note that this strategy is naturally geared toward \csalgs which can be described via some universal property, as this allows a relatively convenient way to define new representations. This leads us to raise the following natural question:

\begin{Question}
Is there a condition on a \csalg $A$ which allows the strategy above to be carried out? 
\end{Question}
Specific examples of interest are free products of finite groups. As a start, we would like to mention the following observation. 
\begin{Obs}\label{cor:free-products-Poulsen-face}
    Consider a free product of groups $G=G_1*G_2$. Assume that both $G_1$ and $G_2$ admit an irreducible   $n$-dimensional 
  unitary representation, for some $n\geq 3$. Then the space of traces of $G$ admits a face which is a Poulsen simplex.
\end{Obs}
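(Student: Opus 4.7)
The plan is to pull back a Poulsen face from $\Traces{\MnMn}$ through a surjection $C^*(G)\twoheadrightarrow\MnMn$ coming from the two given irreducible representations.

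By Burnside's theorem, each irreducible representation $\rho_i\colon G_i\to U(n)$ extends to a \emph{surjective} unital $*$-homomorphism $\tilde\rho_i\colon C^*(G_i)\twoheadrightarrow \Mn(\C)$. Using the universal property of the unital free product of \csalgs, together with the identification $C^*(G_1*G_2)=C^*(G_1)*C^*(G_2)$ recalled after Theorem \ref{Thm:free products of commutative algebras}, I would combine the $\tilde\rho_i$ into a single surjective unital $*$-homomorphism
$$\pi\colon C^*(G)\twoheadrightarrow \MnMn.$$

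Next I would establish the soft principle that if $\pi\colon A\twoheadrightarrow B$ is any surjective unital $*$-homomorphism between separable unital \csalgs, then the pullback map $\pi^*\colon\Traces{B}\to\Traces{A}$, $\psi\mapsto\psi\circ\pi$, is an affine homeomorphism onto a closed face, and takes faces to faces. Injectivity, affinity, and continuity are immediate; for the face property, if $\pi^*(\psi)=t\varphi_1+(1-t)\varphi_2$ with $\varphi_j\in\Traces{A}$ and $t\in(0,1)$, then each $\varphi_j$ vanishes on the positive part of $\ker\pi$ by positivity, and therefore on all of $\ker\pi$ (which, as a closed two-sided ideal of $A$, is the closed linear span of its positive cone). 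Hence each $\varphi_j$ descends to a trace on $B$, so $\pi^*\Traces{B}$ is a face of $\Traces{A}$, and the same argument applied inside $\Traces{B}$ shows that the image of any face of $\Traces{B}$ under $\pi^*$ is a face of $\Traces{A}$.

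It then suffices to produce a Poulsen face inside $\Traces{\MnMn}$ and pull it back through $\pi^*$. When $n\geq 4$, Theorem \ref{Second Theorem} gives that $\Traces{\MnMn}$ is itself a Poulsen simplex. When $n=3$, the results of \S\ref{section:findim}, and in particular the $n=3$ consequence recorded immediately after Proposition \ref{prop:finite-dimensional amplification}, provide a Poulsen face of $\Traces{\Matrix_3(\C)*\Matrix_3(\C)}$, namely the closure of the finite-dimensional traces. In either case, the pullback through $\pi^*$ is the desired Poulsen face of $\Traces{C^*(G)}$. The argument is a purely functorial reduction to already-established results of the preceding sections; there is no substantive obstacle, and the only step needing care is the standard face-preservation check for $\pi^*$.
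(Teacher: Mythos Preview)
Your proof is correct and follows essentially the same approach as the paper: obtain a surjection $C^*(G)\twoheadrightarrow\MnMn$ from the two irreducible representations (the paper phrases this as Schur's Lemma rather than Burnside) and pull back a Poulsen face of $\Traces{\MnMn}$ through it. The only minor difference is that the paper avoids your case split on $n$ by citing \cite[Theorem 3.9]{musat2021factorizable}, or equivalently its own Corollary~\ref{cor:amalgamated poulsen}, which gives a Poulsen face of $\Traces{\MnMn}$ uniformly for all $n\geq 3$ via the quotient $\MnMn\twoheadrightarrow \Mn*_D\Mn$.
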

Indeed, the unitary representation of each $G_i$ extends to a $*$-representation of $C^*(G_i)$, which surjects onto  $\Mn(\C)$ by Schur's Lemma.
We thus have a surjective $*$-homomorphism $C^*(G) \to \MnMn$, which according to  \cite[Theorem 3.9]{musat2021factorizable} (see also Corollary \ref{cor:amalgamated poulsen}), has a face which is affinely homeomorphic to the Poulsen simplex.

We recall that the trace space of a free product of finite groups is never a Poulsen simplex (Theorem \ref{thm:intro amalgam not poulsen}). We suspect, however, that the reason for this is finite-dimensional in nature, namely, we believe that the following question might have a positive answer:
\begin{Question}
    Let $G=G_1*...*G_d$ be a free product of finite simple groups. Assume either that $d\geq 3$, or that $d\geq 2$ and  the order of $G_1$ is at least $3$. Let $C$ be the  closed convex hull of all extreme infinite-dimensional traces on $G$. Is $C$ a Poulsen simplex?
\end{Question}
A similar question may be asked for the trace simplex of $C(X_1)*C(X_2)$ where $X_1$ and $X_2$ are compact spaces which admit isolated points.

Finally, we raise the following dynamical aspect of our results. Since the group of automorphisms of a group acts on the space of traces, we get,  by Corollary \ref{Cor Free Group},
 a natural action of the automorphism group $\operatorname{Aut}(F_d)$ (or $\operatorname{Out}(F_d)$) on the Poulsen simplex. 
This group has gathered significant attention in the last years and was only recently proven to have property $(T)$, when $d \geq 4$ \cite{kaluba2021property, nitsche2020computer}. 
We note that while the trace space of $F_d$ is a Poulsen simplex (for $d\geq 2$), the space of $\operatorname{Aut}(F_d)$-invariant traces is a Bauer simplex (for $d\geq 4$), namely, the set of extreme points is compact \cite{levit2023characterlimits}.
$\operatorname{Aut}(F_d)$-invariant traces on $F_d$  have gathered interest in recent years with connection to the Wiegold Conjecture and certain questions regarding word measures on groups; see \cite{collins2019automorphism} for a survey. 
The realization of an action of $\operatorname{Aut}(F_d)$ on this universal simplex is therefore interesting towards further understanding the dynamics of $\operatorname{Aut}(F_d)$. We thus raise the following potential extension of Corollary \ref{Cor Free Group} as a starting point:

\begin{Question}
Does there exist an extreme trace on $F_d$ whose $\operatorname{Aut}(F_d)$-orbit is dense in the space of traces?
\end{Question}

\subsection*{Structure of the paper}
Basic facts on traces and von Neumann algebras are surveyed in \S\ref{sec:general statements}. The proof of Theorem \ref{Thm:free products of commutative algebras} on free products of commutative \csalgs is given in \S\ref{sec:free group}, and the proof of Theorem \ref{Second Theorem} on free products of matrix algebras is given in \S\ref{sec:matrix algebras}. The face of finite-dimensional traces is discussed in \S\ref{section:findim} using representation varieties. Other faces are discussed in \S\ref{sec:faces}, and in particular, this section contains the proof of Theorem \ref{thm:intro-faces-poulsen}. We finish with property (ET) and obstructions for having a Poulsen trace simplex in \S\ref{sec:obstructions}, including the proof of Theorem \ref{thm:intro amalgam not poulsen}.

\subsection*{Acknowledgements} 
The authors extend their special gratitude to Adam Dor-On for his invaluable assistance with numerous suggestions, as well as for being a source of great inspiration throughout this work.  The authors also thank Ilan Hirshberg, Mehdi Moradi and Mikael Rørdam for very helpful remarks on the manuscript.  Our appreciation goes to Nir Avni and Elyasheev Leibtag for their useful insights concerning \S\ref{section:findim}, to Arie Levit for engaging discussions related to \S\ref{sec:obstructions}, and to Guy Salomon for advice and guidance. The second and third authors also wish to acknowledge Uri Bader and Tsachik Gelander for their unwavering support.





\section{Preliminaries}\label{sec:general statements}
Throughout the paper, if $a$ is an operator on a Hilbert space, the notation $\|a\|$ will always mean the standard (operator) norm of $a$. Other norms will be indicated with suitable subscripts.
\subsection{Tracial representations}
The proofs of the main results depend crucially on the intimate relation between traces and tracial representations. 

Let $\Hil$ be a Hilbert space, and  denote by $\BH$ the algebra of all bounded operators on $\Hil$.
Let $M\subseteq\BH$ be a \vnalg containing the identity $1_\Hil$. $M$ is called a \emph{factor} if its center is trivial, i.e consists only of the scalar operators  $\C\cdot 1_\Hil$. A map between von Neumann algebras is called \emph{normal} if it is continuous with respect to the ultraweak topology. Morphisms between von Neumann algebras are, by definition, normal $*$-homomorphisms. 
For a subset $S \subseteq M$, $W^*(S)$ denotes the von Neumann subalgebra generated by $S$. 
By the double commutant theorem $W^*(S)=S''\subseteq \BH$.

  By a \emph{tracial von Neumann algebra} we mean a pair $(M,\tau)$ where $M$ is a von Neumann algebra and $\tau$ is a normal and faithful trace on $M$. The trace norm on $M$ is defined by $\|x\|_{\tau}=\tau(x^*x)^{\frac{1}{2}}$ for $x\in M$. A tracial von Neumann algebra which is a factor is called a \emph{finite factor}. A classical result by Murray and von Neumann states that a finite factor admits a unique trace $\tau_M$, and this trace is  normal and faithful \cite{murray1937rings}. 
The most basic example of a finite factor is the algebra of $n$-by-$n$ matrices $\Mn=\Mn(\C)$ -- the unique trace on it is the normalized trace which we denote by $\tr_n$.  More generally, for a von Neumann algebra $N$ we denote by $\Mn(N)$ the von Neumann algebra of $n$-by-$n$ matrices with entries in $N$. This algebra is canonically isomorphic to the tensor product $\Mn\otimes N$, and if $\tau$ is a normal and faithful trace on $N$ then $\tr_n\otimes \tau$ is a normal and faithful trace on $\Mn(N)$. Moreover $\Mn(N)$ is a factor if and only if $N$ is a factor.

Projections in tracial von Neumann algebras have a rich structure. If $p$ is a non-zero projection in a von Neumann algebra $M$ then $pMp$ is also a von Neumann algebra, often referred to as a corner of $M$. $pMp$ is a factor provided that $M$  is a factor \cite[\S I.2]{dixmier2011neumann}.   If $\tau$ is a (normal) trace on $M$ then $\frac{1}{\tau(p)}\tau\restrict{pMp}$ is a (normal) trace on $pMp$.   Suppose $(M,\tau)$ is a finite factor.  If $p_1,p_2$ are projections with $\tau(p_1)=\tau(p_2)$, then there exists a partial isometry $v\in M$ for which $v^*v=p_1$ and $vv^*=p_2$. If $M$ is moreover infinite-dimensional, then for any two projections $p_1\leq p_2$ and for any $r\in [0,1]$ with $\tau(p_1)\leq r \leq \tau(p_2)$ there exists a projection $p_1\leq q\leq p_2$ with $\tau(q)=r$. These facts may be found in  \cite{anantharaman2017introduction}. 

Let $A$ be a \csalg and $\pi:A\to \mathcal{B}(\mathcal{H})$ a unital $*$-representation. The von Neumann algebra $\pi(A)''\subseteq \BH$ is called \emph{the von Neumann algebra of $\pi$}. 
A \emph{tracial representation} of $A$ is a pair $(\pi,\tau)$ where $\pi$ is a unital $*$-representation and $\tau$ is a normal and faithful trace on $\pi(A)''$. 
Two tracial representations $(\pi_1,\tau_1)$ and $(\pi_2,\tau_2)$ are said to be \emph{quasi-equivalent} if there exists a $*$-isomorphism between the von Neumann algebras, $\Phi:\pi_1(A)''\to\pi_2(A)''$  such that $\tau_2 \circ\Phi=\tau_1$. We recall the standard fact that traces parameterize tracial representations as follows:
\begin{Thm}
[See \cite{dixmier1977c,bekka_harp}]
\label{thm:extremal-factor}
    Let $A$ be a separable unital  \csalg. Then there exists a one-to-one correspondence between traces on $A$ and quasi-equivalence classes of tracial representations of $A$. The trace corresponding to a tracial representation $(\pi,\tau)$ is $\varphi=\tau\circ \pi$. Moreover, $\varphi$ is extreme if and only if $\pi(A)''$ is a factor.  
\end{Thm}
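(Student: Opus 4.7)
The plan is to use the GNS construction to pass between traces and tracial representations, and to derive the extremality criterion from a Radon--Nikodym-type theorem for tracial functionals.

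For the correspondence itself, I would first build the tracial representation associated to a trace $\varphi$: apply GNS to produce $(\pi_\varphi, \mathcal{H}_\varphi, \xi_\varphi)$, and observe that conjugation invariance $\varphi(ab)=\varphi(ba)$ promotes $\xi_\varphi$ to a tracial cyclic vector for $M_\varphi := \pi_\varphi(A)''$, so that $\tau(x) := \langle x\xi_\varphi,\xi_\varphi\rangle$ extends by normality to a trace on $M_\varphi$. This trace is faithful because tracial cyclic vectors are automatically separating. Conversely, $\tau\circ\pi$ is visibly a trace on $A$ for any tracial representation $(\pi,\tau)$. For the quasi-equivalence statement, given two tracial representations $(\pi_i,\tau_i)$ yielding the same $\varphi$, I would pass to the standard form of each $(M_i,\tau_i)$, note that in both cases the canonical cyclic separating vector induces the trace $\varphi$, and apply the uniqueness of the GNS construction to obtain a spatial $*$-isomorphism $M_1\to M_2$ intertwining $\pi_1$ with $\pi_2$ and $\tau_1$ with $\tau_2$.

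The factor--extreme equivalence is the heart of the argument and hinges on the Radon--Nikodym theorem for tracial positive functionals: every positive tracial $\psi\le\varphi$ is uniquely of the form $\psi(a)=\tau(h\,\pi_\varphi(a))$ with $h\in Z(M_\varphi)$ and $0\le h\le 1$; the centrality of $h$, rather than merely its positivity, is exactly what the tracial hypothesis on $\psi$ delivers, and is the main technical input I would cite from \cite{dixmier1977c}. If $M_\varphi$ is a factor, then $h$ is scalar and so $\psi$ is a scalar multiple of $\varphi$, which forbids any non-trivial convex decomposition of $\varphi$. Conversely, if $M_\varphi$ has a non-trivial central projection $p$ with $0<\tau(p)<1$, I set $\varphi_1(a)=\tau(p)^{-1}\tau(p\pi_\varphi(a))$ and $\varphi_2(a)=(1-\tau(p))^{-1}\tau((1-p)\pi_\varphi(a))$; these are traces on $A$ by the centrality of $p$, and satisfy $\varphi=\tau(p)\varphi_1+(1-\tau(p))\varphi_2$. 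To see $\varphi_1\ne\varphi_2$, note that equality would force $\tau((p-\tau(p)\cdot 1)\pi_\varphi(a))=0$ for every $a\in A$; ultraweak density of $\pi_\varphi(A)$ in $M_\varphi$ combined with faithfulness of $\tau$ then forces $p=\tau(p)\cdot 1$, contradicting that $p$ is a non-trivial projection.

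The main obstacle is the Radon--Nikodym step, and in particular extracting the centrality of $h$ from the tracial property of $\psi$; once this is in hand, everything else is bookkeeping with GNS and the standard form of a tracial von Neumann algebra.
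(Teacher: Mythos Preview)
The paper does not supply its own proof of this theorem: it is stated with the citation ``[See \cite{dixmier1977c,bekka_harp}]'' and then used as a black box throughout. Your proposal is correct and is essentially the standard argument one finds in those references---GNS to pass from traces to tracial representations, uniqueness of GNS (or equivalently the standard form) for the quasi-equivalence part, and the tracial Radon--Nikodym theorem for the extreme-iff-factor equivalence---so there is nothing to compare against.
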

A trace on $A$ is said to be \emph{finite-dimensional} if the von Neumann algebra corresponding to it is finite-dimensional.

\subsection{Approximation of traces}
The following two simple lemmas will be used often.  The first one shows that in order to ensure that a trace simplex is Poulsen, it suffices to approximate traces of the form $\frac{1}{2}(\varphi_1 +\varphi_2)$ where $\varphi_1, \varphi_2$ are extreme.

\begin{Lemma}\label{lem:approximating-sums-of-two-extreme-points}

Let $C$ be a metrizable compact convex set. Suppose that for any two extreme points $x_1, x_2 \in \partial_e C$, the mid-point $\frac{1}{2}(x_1+x_2)$ lies in the closure of $\partial_e C$. Then $\partial_e C$ is dense in $C$.
\end{Lemma}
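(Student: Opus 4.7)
The plan is to bootstrap the midpoint hypothesis into a statement about arbitrary finite convex combinations of extreme points, and then appeal to the Krein--Milman theorem. Since $\overline{\partial_e C}$ is by hypothesis closed under taking midpoints of pairs of extreme points, a simple iteration should show it is closed under taking midpoints of pairs of its own points, and hence under dyadic convex combinations. From there, a density argument against the convex hull of $\partial_e C$ finishes things off.

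More concretely, the first step is to prove by induction on $n\geq 1$ the claim that every average of the form $\frac{1}{2^n}(x_1+\cdots+x_{2^n})$ with $x_i\in\partial_e C$ lies in $\overline{\partial_e C}$. The base case $n=1$ is the hypothesis. For the inductive step, write such an average as $\frac{1}{2}(y+z)$ where $y,z$ are averages of $2^{n-1}$ extreme points; by induction $y=\lim_k y_k$ and $z=\lim_k z_k$ for some $y_k,z_k\in\partial_e C$, and by hypothesis $\frac{1}{2}(y_k+z_k)\in\overline{\partial_e C}$, so passing to the limit (which is legal since $C$ is metrizable and compact, in particular, the convex combination map is continuous) yields $\frac{1}{2}(y+z)\in\overline{\partial_e C}$. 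Allowing repetitions among the $x_i$, this already gives every dyadic convex combination $\sum_{i=1}^{k}\frac{m_i}{2^n}\,x_i$ of extreme points.

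Next, an arbitrary convex combination $\sum_{i=1}^{k}\lambda_i x_i$ of extreme points can be approximated in $C$ by dyadic ones: choose dyadic rationals $\lambda_i^{(n)}\to\lambda_i$ with $\sum_i\lambda_i^{(n)}=1$ (for instance, take $\lambda_i^{(n)}=\lfloor 2^n\lambda_i\rfloor/2^n$ for $i<k$ and let $\lambda_k^{(n)}$ absorb the defect). Since $C$ sits inside a locally convex space and the $x_i$ are fixed, $\sum_i\lambda_i^{(n)}x_i\to\sum_i\lambda_i x_i$, and the closedness of $\overline{\partial_e C}$ then places the limit in $\overline{\partial_e C}$. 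Hence $\mathrm{conv}(\partial_e C)\subseteq \overline{\partial_e C}$. Finally, metrizable compact convex sets satisfy Krein--Milman, so $C=\overline{\mathrm{conv}(\partial_e C)}\subseteq\overline{\partial_e C}\subseteq C$, giving $\overline{\partial_e C}=C$.

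There is no real obstacle here; the argument is entirely soft. The only points requiring mild care are the bookkeeping in the approximation of $(\lambda_i)$ by dyadic rationals summing to $1$, and ensuring that the passage to the limit in the midpoint step uses joint continuity of the convex combination map, which is automatic in a (metrizable) topological vector space.
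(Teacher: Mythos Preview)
Your proof is correct and follows essentially the same route as the paper's: establish by induction that dyadic convex combinations of extreme points lie in $\overline{\partial_e C}$, extend to arbitrary coefficients by density, and conclude via Krein--Milman. You have simply supplied more detail than the paper's terse version.
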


\begin{proof}
By induction, any finite convex combination of extreme points with coefficients in $\mathbb{Z}\left[\frac{1}{2}\right]$ -- and hence with arbitrary coefficients -- can be approximated by an extreme point. Thus by Krein-Milman, any point in $C$ can be approximated by a convex combination of extreme points.
\end{proof}

The second lemma shows that in order to approximate a trace, it is enough to approximate the corresponding tracial representations on a generating set, in terms of the trace norm.

\begin{Lemma}\label{lem:approximate in norm on generators}
Let $A$ be a unital \csalg generated by a set $S$. Let $\pi$ be a $*$-representations of $A$ into a tracial von Neumann algebra $(M,\tau)$. Assume that for any $\eps>0$ there exists a $*$-representation $\pi_\eps:A\to M$ such that 
\begin{equation}\label{eq:approximate in norm on generators}
    \forall x\in S: \quad \|\pi_\eps(x) - \pi(x)\|_\tau <\eps  
\end{equation}
Then we have the following convergence of traces:
\[
    \forall x\in A: \quad \lim_{\eps\to 0} \frac{1}{\tau (\pi_\eps(1))}\tau \circ \pi_\eps(x)=
    \frac{1}{\tau(\pi(1))}\tau \circ \pi(x)
\]  
\end{Lemma}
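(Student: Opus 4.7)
The plan is a two-step density argument: first extend the trace-norm convergence from the generating set $S$ to all of $A$, and then pass from trace-norm convergence to trace convergence via Cauchy--Schwarz.

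For the first step, I would define
\[
B := \bigl\{x \in A \,:\, \|\pi_\eps(x) - \pi(x)\|_\tau \to 0 \text{ as } \eps\to 0\bigr\},
\]
which contains $S$ by hypothesis, and verify that $B$ is a norm-closed unital $*$-subalgebra of $A$. It is a linear subspace by the triangle inequality, it is $*$-closed because $\|a^*\|_\tau = \|a\|_\tau$, and it is closed under products by the standard bimodule decomposition
\[
\pi_\eps(xy) - \pi(xy) = \pi_\eps(x)\bigl(\pi_\eps(y) - \pi(y)\bigr) + \bigl(\pi_\eps(x) - \pi(x)\bigr)\pi(y),
\]
combined with the estimate $\|ab\|_\tau \leq \|a\|\cdot \|b\|_\tau$ on $M$ and the fact that $*$-homomorphisms are contractive, so $\|\pi_\eps(x)\|\leq \|x\|$ uniformly in $\eps$. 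Norm-closure of $B$ follows from the inequality $\|a\|_\tau \leq \tau(1_M)^{1/2}\|a\|$ (itself a Cauchy--Schwarz estimate for $\tau$), which propagates an approximation of $x$ by elements $x_n \in B$ into an approximation of $\pi_\eps(x)$ by $\pi_\eps(x_n)$ in trace norm. Since $S$ generates $A$ as a unital $C^*$-algebra, this forces $B = A$.

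For the second step, I would apply Cauchy--Schwarz once more in the form $|\tau(y)| \leq \tau(1_M)^{1/2}\|y\|_\tau$ for $y \in M$, giving
\[
\bigl|\tau\circ\pi_\eps(x) - \tau\circ\pi(x)\bigr| \leq \tau(1_M)^{1/2}\,\|\pi_\eps(x) - \pi(x)\|_\tau \xrightarrow[\eps\to 0]{} 0
\]
for every $x \in A$. Specializing to $x = 1$ yields $\tau(\pi_\eps(1)) \to \tau(\pi(1))$, which is strictly positive whenever $\pi \neq 0$, so for all sufficiently small $\eps$ the normalizations appearing in the statement are well-defined. The conclusion then follows by taking the quotient of the two limits.

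I do not anticipate any real obstacle. The only step that requires any care is the multiplicative closure of $B$, where one needs the $\eps$-uniform bound $\|\pi_\eps(x)\|\leq \|x\|$; this is automatic from contractivity of $*$-homomorphisms and so does not present a genuine difficulty.
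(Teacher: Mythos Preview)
Your proposal is correct and follows essentially the same approach as the paper: extend the trace-norm convergence from $S$ to the $*$-algebra it generates via the product decomposition and contractivity of $*$-homomorphisms, then pass to $\tau$-convergence by Cauchy--Schwarz. The only organizational difference is that the paper first passes to $\tau$-convergence on the dense $*$-subalgebra and then extends to all of $A$ by uniform boundedness (phrased as weak-$*$ compactness), whereas you first show the set $B$ of good elements is norm-closed and only afterwards apply Cauchy--Schwarz; both routes rest on the same estimates.
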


We remark that the $*$-representations in this lemma are not assumed to be unital.

\begin{proof}
The limit $\lim_{\eps\to 0}\|\pi_\eps(x)-\pi(x)\|_\tau =0$ holds true for $x\in S$, and thus for all $x$  in the dense $*$-algebra $A_0$ generated by $S$. $\tau$ is normal and in particular continuous in the trace norm. The convergence  $\tau\circ \pi_\eps(x)\to \tau\circ \pi (x)$ therefore holds for any $x\in A_0$. The weak-* compactness of the unit ball in $A^*$ implies that $\tau\circ \pi_\eps(x)\to \tau\circ \pi(x)$ holds true for all $x\in A$ and in particular for $1\in A$. The statement thus follows. 
\end{proof}

\section{Free products of commutative \csalgs} \label{sec:free group}
This section is devoted to proving Theorem \ref{Thm:free products of commutative algebras}. We start by presenting a general method for perturbing representations. 
\subsection{Perturbations of representations}
We recall that any two uncountable Polish spaces are Borel isomorphic
     \cite[Thm 3.3.13]{srivastava2008course}. The following lemma is a refinement of this fact.
\begin{Lemma}\label{lemma:Polish-magic}
Let $(X,d)$ be a compact metric space with no isolated points. Let $Z$ be a Polish space and $p:Z\to X$ a continuous  surjective map. 
Then for any $\eps>0$, there exists a Borel isomorphism $\tilde{p}: Z \to X$ such that $d(\tilde{p}(z), p(z))<\eps$ for all $z \in Z$.
\end{Lemma}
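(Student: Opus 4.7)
The plan is to reduce the problem to the classical Borel isomorphism theorem by cutting $X$ into finitely many small Borel pieces that are each uncountable. Concretely, I aim to produce a finite Borel partition $X = X_1 \sqcup \cdots \sqcup X_N$ with $\operatorname{diam}(X_j) < \eps$ and $X_j$ uncountable for every $j$. Granted such a partition, I set $Z_j := p^{-1}(X_j)$, obtaining a Borel partition of $Z$. Since $p$ is surjective onto $X_j$, the set $Z_j$ surjects onto an uncountable set and is therefore itself uncountable; it is also Borel in $Z$, hence standard Borel. Because any two uncountable standard Borel spaces are Borel isomorphic, there exist Borel isomorphisms $\tilde p_j : Z_j \to X_j$, and their disjoint union gives a Borel isomorphism $\tilde p : Z \to X$ satisfying $d(\tilde p(z), p(z)) \leq \operatorname{diam}(X_j) < \eps$ for $z \in Z_j$.

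First I would construct a preliminary partition by compactness. Fix $r$ with $0 < r < \eps/6$ and choose a finite cover of $X$ by open balls $B_i := B(x_i, r)$, $1 \leq i \leq N$. Disjointify to a Borel partition by setting $V_i := B_i \setminus \bigcup_{k<i} B_k$, so that $V_i \subseteq B_i$ and in particular $\operatorname{diam}(V_i) < 2r$. Let $J := \{i : V_i \text{ is uncountable}\}$. Since $X$ is a nonempty compact metric space with no isolated points, it is perfect and hence uncountable, so $J \neq \emptyset$.

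The heart of the argument is to absorb the (at most countable) pieces $V_i$ for $i \notin J$ into the good pieces $V_j$ for $j \in J$ without destroying the diameter bound. For each $i \notin J$, the ball $B(x_i, r)$ is uncountable, because $X$ has no isolated points, so it must meet some $V_{j(i)}$ with $j(i) \in J$. Pick one such $j(i)$; then $d(x_i, x_{j(i)}) < 2r$ and hence $V_i \subseteq B(x_i, r) \subseteq B(x_{j(i)}, 3r)$. Define, for $j \in J$,
\[
    X_j := V_j \cup \bigcup_{\substack{i \notin J \\ j(i) = j}} V_i.
\]
By construction $X_j \subseteq B(x_j, 3r)$, so $\operatorname{diam}(X_j) \leq 6r < \eps$; and $X_j \supseteq V_j$ is uncountable. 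The family $\{X_j\}_{j \in J}$ is then the required partition, and the lemma follows via the argument of the first paragraph.

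I expect the main obstacle to be precisely the redistribution step in the last paragraph: one has to exploit the no-isolated-points hypothesis to guarantee that each countable residual piece lies geometrically close to some uncountable piece, which forces the initial radius $r$ to be chosen well below $\eps$ (here $r < \eps/6$). Everything else is the standard Borel isomorphism theorem for uncountable standard Borel spaces.
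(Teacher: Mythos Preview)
Your proof is correct and follows the same overall strategy as the paper: partition $X$ into finitely many uncountable Borel pieces of small diameter, pull them back to $Z$, and apply the Borel isomorphism theorem piecewise. The only difference is in the construction of the partition. The paper chooses a maximal $\eps$-separated set $\{x_1,\dots,x_N\}$, forms the closed Voronoi cells $C_j=\{x:d(x,x_j)\le d(x,x_k)\text{ for all }k\}$, and disjointifies via $X_j=C_j\setminus\bigcup_{i<j}C_i$. Each $X_j$ then automatically contains an open ball around $x_j$ and is therefore uncountable, so no redistribution step is needed and one can work directly with radius~$\eps$. Your disjointified-balls construction works as well, but the step you singled out as the main obstacle---absorbing countable leftover pieces into nearby uncountable ones and paying a factor in the radius---is precisely what the Voronoi approach sidesteps.
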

\begin{proof}
    Fix $\eps>0$. The assumptions on $X$  guarantee the existence of  a finite partition 
    \[
        X=X_1\cup...\cup X_N, \qquad \text{for some }N\in \N,
    \] 
    where each $X_j$ is an uncountable $G_\delta$-set  of diameter at most $\eps$. Indeed, here is one possible way to construct such a partition. Choose a maximal  $\eps$-separated set $F=\{x_1,...,x_N\}\subseteq X$.  For each $j=1,...,N$ consider the corresponding closed Voronoi cell
     \[
    C_j= \{ x \in X \: |\;\forall k: d(x, x_j) \leq d(x, x_k) \:  \}.
    \]
    Then, to ensure that the sets are disjoint, define
    \[
    X_j= C_j \setminus \bigcup_{i<j} C_i.
    \]
    The property of the set $F$ ensures that $X_1,...,X_N$ form a partition of $X$, and that each of them has diameter at most $\eps$. Moreover, since $X$ has no isolated points then any ball inside it is uncountable (because any complete metric space without isolated points is uncountable). In particular, each $X_j$ is uncountable. 

    Having fixed such a partition of $X$ with the aforementioned properties,  we proceed by noting that the pre-images  $Z_j=p^{-1}(X_j)$ also form  a partition of $Z$ into finitely many uncountable $G_\delta$-sets. 
    Recall that $G_\delta$-sets in a Polish space are Polish as well. 
    We may therefore fix for each $j$ some Borel isomorphism $\tilde{p}_j: Z_j \to X_j$.
    Then define $\tilde{p}$ by $\tilde{p}(z) = \tilde{p}_j(z)$ for the unique $j$ for which $z\in Z_j$. 
    Clearly, $\tilde{p}$ is a Borel isomorphism from $Z$ to $X$. The remaining requirement is satisfied, since for any $z \in Z$, both $p(z)$ and $\tilde{p}(z)$ belong to the same $X_j$ which has diameter at most $\eps$.
\end{proof}

The lemma presented below is crucial for the proof of Theorem \ref{Thm:free products of commutative algebras}.
\begin{Lemma}\label{lemma:unitary_packing} 
    Let $X$ be a compact metrizable space with no isolated points, and consider a non-degenerate unital $*$-representation  $\pi: C(X) \to \BH$ where $\Hil$ is a separable Hilbert space. Suppose that $M$ is a commutative von Neumann subalgebra of $\BH$ containing $\pi(C(X))$. Then $\pi$ can be approximated arbitrarily well in the compact-open topology by a unital $*$-representation $\tilde\pi: C(X) \rightarrow \BH$ such that $W^*(\tilde{\pi}(C(X)))= M$.
\end{Lemma}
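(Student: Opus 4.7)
The plan is to pass to the Gelfand spectrum of $M$ and carry out the perturbation measure-theoretically. Since $M$ is a commutative von Neumann algebra acting on the separable Hilbert space $\Hil$, spectral theory provides a normal $*$-isomorphism $\Phi\colon M\to L^\infty(Y,\mu)$ for some standard Borel probability space $(Y,\mu)$. Composing with $\pi$ gives a unital $*$-homomorphism $C(X)\to L^\infty(Y,\mu)$, and a standard argument---evaluate a countable dense subset of $C(X)$ pointwise and use that every character of $C(X)$ is a point evaluation on $X$---produces a Borel map $p\colon Y\to X$, defined $\mu$-almost everywhere, such that $\Phi(\pi(f))=f\circ p$ in $L^\infty(Y,\mu)$ for every $f\in C(X)$. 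The task then is to perturb $p$ into a Borel injection, staying pointwise $\eps$-close.

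The next step is to mimic the proof of Lemma \ref{lemma:Polish-magic}. Fix $\eps>0$ and partition $X=X_1\sqcup\cdots\sqcup X_N$ into uncountable $G_\delta$ pieces each of diameter at most $\eps$, which is possible because $X$ has no isolated points. Setting $Y_j:=p^{-1}(X_j)$ gives a Borel partition of a conull subset of $Y$. Since each $X_j$ is an uncountable standard Borel space and each $Y_j$ is standard Borel of cardinality at most the continuum, Kuratowski's isomorphism theorem (with a trivial choice when $Y_j$ is countable) yields a Borel injection $\tilde{p}_j\colon Y_j\to X_j$. Gluing produces a Borel injection $\tilde{p}\colon Y\to X$ with $d(\tilde{p}(y),p(y))<\eps$ pointwise, as both $p(y)$ and $\tilde{p}(y)$ lie in the same piece $X_j$. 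I then define $\tilde{\pi}(f):=\Phi^{-1}(f\circ\tilde{p})$, a unital $*$-representation of $C(X)$ into $M\subseteq\BH$.

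Two verifications remain. For approximation, uniform continuity of each $f\in C(X)$ on the compact space $X$ yields the operator-norm bound $\|\tilde{\pi}(f)-\pi(f)\|\leq\omega_f(\eps)$, where $\omega_f$ is the modulus of continuity of $f$; this vanishes as $\eps\to 0$ and is much stronger than convergence in the compact-open topology on $\BH$. For generation, the Lusin--Suslin theorem guarantees that a Borel injection sends Borel sets to Borel sets, so the pullback $\sigma$-algebra $\tilde{p}^{-1}(\mathrm{Bor}(X))$ equals $\mathrm{Bor}(Y)$; consequently $\tilde{p}^*$ is a normal $*$-isomorphism $L^\infty(X,\tilde{p}_*\mu)\xrightarrow{\sim}L^\infty(Y,\mu)$, and since $C(X)$ is weak-$*$ dense in $L^\infty(X,\tilde{p}_*\mu)$ (by Lusin's theorem and Tietze extension), the image $\tilde{\pi}(C(X))$ weak-$*$ generates all of $M\cong L^\infty(Y,\mu)$, as required.

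I expect the most delicate step to be the bookkeeping between the abstract identification $M\cong L^\infty(Y,\mu)$ and the concrete operators on $\Hil$, together with uniform handling of atoms of $\mu$ in the injection construction. Both concerns turn out to be benign: the spectral isomorphism is an isometry for the operator and essential-supremum norms, so $L^\infty$-convergence transfers to operator-norm convergence on $\Hil$; and atomic pieces $Y_j$ can always be trivially injected into the uncountable $X_j$ by sending distinct atoms to distinct points, after which the argument proceeds without change.
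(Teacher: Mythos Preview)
Your argument is correct and follows essentially the same strategy as the paper: model $M$ as an $L^\infty$ space, realize $\pi$ as precomposition with a map $p$ into $X$, and perturb $p$ within pieces of a small-diameter partition of $X$. The execution differs in two noteworthy ways. First, the paper passes through a separable $C^*$-subalgebra $A$ of $M$ so that the Gelfand spectrum $Z$ of $A$ is Polish and the induced map $p\colon Z\to X$ is \emph{continuous and surjective}, which lets them invoke Lemma~\ref{lemma:Polish-magic} directly to obtain a Borel \emph{isomorphism} $\tilde p\colon Z\to X$. You instead work with the abstract measure-theoretic model $(Y,\mu)$ and produce only a Borel \emph{injection} $\tilde p$; your Lusin--Suslin argument then shows that injection already suffices for $\tilde p^*\colon L^\infty(X,\tilde p_*\mu)\to L^\infty(Y,\mu)$ to be a $*$-isomorphism. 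This is a genuine simplification: it removes the need for the auxiliary $C^*$-algebra $A$ and the surjectivity hypothesis in Lemma~\ref{lemma:Polish-magic}, and it handles the cases where some $Y_j$ are countable (or where $p$ is not essentially surjective) without extra work.

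One small point of presentation: the phrase ``compact-open topology on $\BH$'' is not what the lemma means. The approximation is in the compact-open topology on the space of $*$-homomorphisms $C(X)\to\BH$, i.e.\ uniform operator-norm convergence over compact subsets $K\subset C(X)$. Your estimate $\|\tilde\pi(f)-\pi(f)\|\le\omega_f(\eps)$ is exactly right, but to conclude you should note, as the paper does via Arzel\`a--Ascoli, that compact subsets of $C(X)$ are equicontinuous, so $\sup_{f\in K}\omega_f(\eps)\to 0$ as $\eps\to 0$.
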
 
To clarify, the lemma states that for any $\eps>0$ and compact set $K\subseteq C(X)$ there exists $\tilde{\pi}:C(X)\to M$ such that $\sup_{g\in K}\|\pi(g)-\tilde{\pi}(g)\|<\eps$, and at the same time $\tilde{\pi}(C(X))$ is $WOT$-dense in $M$. 
\begin{proof}
$M$ is a von Neumann algebra on a separable Hilbert space and as such, it admits a WOT-dense norm-separable $C^*$-subalgebra $A$. Since $\pi(C(X))$ is separable as well, we may assume that $A$ contains $\pi(C(X))$.  The inclusion $\pi(C(X))\subset A$ induces a surjective continuous map $p:Z\to X$ where $Z$ is the Gelfand spectrum of $A$. By \cite[Thm II.2.5]{davidson1996c}, there is a regular Borel measure $\mu$ on $Z$ and an isomorphism $\rho \fromto{L^\infty(Z,\mu)}{M}$. We note that $\pi(g)=\rho(g\circ p)$ for all $g\in C(X)$.

Fix a metric $d$ on $X$ compatible with its topology. Let $\eps>0$ and $K\subset C(X)$ a compact subset. By Arzela-Ascoli, $K$ is equicontinuous, namely, there exists $\delta>0$ such that if $d(x_1,x_2)<\delta$ then $d(g(x_1),g(x_2))<\eps$ for all $g\in K$ and $x_1,x_2\in X$.
We invoke Lemma \ref{lemma:Polish-magic} with $\delta$ in place of $\eps$ to obtain a Borel isomorphism $\tilde{p}\fromto{Z}{X}$ satisfying $d(\tilde{p}(z),p(z))<\delta$ for all $z\in Z$.
We define $\tilde \pi\fromto{C(X)}{M}$ by $\tilde \pi(g) = \rho(g\circ \tilde{p})$.

First we show that $W^*(\tilde\pi(C(X)))=M$. 
Letting $\nu = \tilde{p}_*(\mu)$ be the push-forward measure, $\tilde{p}$ becomes an isomorphism of measure spaces, i.e. a measure-preserving Borel isomorphism.
Thus, the map $\psi(g) := g\circ \tilde{p}$ defines an isomorphism of von Neumann algebras between $L^\infty(X, \nu)$ and  $L^\infty(Z, \mu)$ which is a homeomorphism w.r.t. the weak-$*$ topologies coming from the corresponding $L^1$-spaces.
Since $C(X)$ is weak-$*$ dense in $L^\infty(X,\nu)$, we have that $\psi(C(X))$ is weak-$*$ dense in $L^\infty(Z, \mu)$. Since $\rho$ is also a homeomorphism from the weak-$*$ topology on $L^\infty(Z,\mu)$ to the weak operator topology on $M$ (as provided by \cite[Thm II.2.5]{davidson1996c}), then $\tilde{\pi}(C(X))$ is WOT-dense in $M$ as claimed.

It remains to show that $\tilde\pi$ approximates $\pi$ up to $\eps$ on $K$. 
By the choice of $\delta$, we have that $\|g\circ p - g\circ \tilde{p}\|<\eps$ for all $g\in K$. Thus,
\begin{align*}
    \|\pi(g) - \tilde\pi(g)\| = \|\rho(g\circ p) - \rho(g\circ \tilde{p})\|
     = \|g\circ p - g\circ \tilde{p}\|
     < \eps.
\end{align*}
\end{proof}

\subsection{Generators for $\Mn$}\label{subsec:generators-of-Mn}
 We will need the following basic lemma regarding $\Mn$ in the proof. Let $I_n$ denote the identity $n$-by-$n$ matrix, $E_{ij}$ (for $1\leq i,j \leq n$) the $n$-by-$n$ matrix with $1$ at its $(i,j)$-entry and $0$ elsewhere, and $C_n$ the permutation matrix representing the cycle $(1 2 \dots n)$, that is\[ C_n =
  \begin{pmatrix}
0 & 0 & \hdots & 1 \\
1 & 0 & \hdots & 0\\

        \vdots &\ddots  &\ddots& \vdots \\
        0 & \hdots & 1 & 0 \rule[-1ex]{0pt}{2ex}
  \end{pmatrix}
\]
Define the following block matrices in $\Mn$: 
\[
U_{k,n} = \left(\begin{array}{@{}c|c@{}}
I_k
  & 0\\
\hline
  0 &
  C_{n-k}
\end{array}\right) ,  \; \; \;
V_{k,n} = 
\left(\begin{array}{@{}c|c@{}}
C_{k}
  & 0\\
\hline
  0 &
  I_{n-k}
\end{array}\right), \; \; \;
\]

\begin{Lemma}
\label{generating_matrices}
For $n \in \mathbb{N}$ and $0 \leq k < n$, the matrices $U_{k,n},V_{k+1,n},E_{11}$
generate $\Mn$ as an algebra.
\end{Lemma}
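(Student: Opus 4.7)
The plan is to show that the $*$-subalgebra $\mathcal{A} \subseteq \Mn$ generated by $\{U_{k,n}, V_{k+1,n}, E_{11}\}$ equals all of $\Mn$ by exhibiting every matrix unit $E_{ij}$ inside it. A preliminary observation is that $U_{k,n}$ and $V_{k+1,n}$ are real permutation matrices, hence unitaries of finite order whose inverses are polynomials in them, and $E_{11}$ is self-adjoint; therefore $\mathcal{A}$ is automatically $*$-closed, which means I may freely use adjoints throughout the construction.

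The first step is to obtain every diagonal matrix unit. Since $V_{k+1,n}$ acts on the standard basis by the cycle $(1, 2, \ldots, k+1)$ and fixes the remaining vectors, successive conjugation of $E_{11}$ by powers of $V_{k+1,n}$ produces $E_{ii}$ for $1 \leq i \leq k+1$. Then conjugating $E_{k+1, k+1}$ by powers of $U_{k,n}$, which cycles $e_{k+1} \to e_{k+2} \to \cdots \to e_n \to e_{k+1}$ and fixes $e_1, \ldots, e_k$, yields $E_{ii}$ for $k+1 \leq i \leq n$. Taken together, all diagonal units $E_{ii}$ lie in $\mathcal{A}$.

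The second step extracts two seed off-diagonal matrix units by direct multiplication: a quick calculation with the block forms gives $V_{k+1,n} \cdot E_{11} = E_{21}$ and $E_{11} \cdot V_{k+1,n} = E_{1,k+1}$. Conjugating $E_{21}$ by the powers of $V_{k+1,n}$ yields the adjacent units $E_{i+1,i}$ for $i = 1, \ldots, k$, while conjugating $E_{1,k+1}$ by powers of $U_{k,n}$ yields $E_{1,j}$ for $j = k+1, \ldots, n$. Combined with their adjoints, these off-diagonal units span a tree on $\{1, \ldots, n\}$ in which every vertex is connected to vertex $1$ either through the chain $1\text{--}2\text{--}\cdots\text{--}(k+1)$ or directly through an edge from the ``star'' centered at $1$ that covers $\{k+1, \ldots, n\}$.

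Finally, I telescope products to assemble every $E_{a,1}$: if $a \leq k+1$, the product $E_{a,a-1} E_{a-1,a-2} \cdots E_{2,1}$ equals $E_{a,1}$; if $a \geq k+1$, it is simply the adjoint of the already-constructed $E_{1,a}$. A symmetric argument gives every $E_{1,b}$, after which $E_{ab} = E_{a,1} E_{1,b}$ produces every matrix unit, proving $\mathcal{A} = \Mn$. The whole argument is essentially an index-bookkeeping exercise; the one point requiring care is that the overlap at the shared index $k+1$ between the two cycles is exactly what makes the combined graph of available off-diagonal units connected. The degenerate case $k=0$, where $V_{1,n} = I_n$, reduces to the classical fact that the full cycle $C_n$ together with $E_{11}$ already generates $\Mn$, and it fits seamlessly into the same framework.
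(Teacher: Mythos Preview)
Your proof is correct and follows essentially the same approach as the paper: both use that the cycles $(1\,2\,\cdots\,k{+}1)$ and $(k{+}1\,\cdots\,n)$ overlap at the index $k{+}1$ and hence generate a transitive permutation group, so that multiplying $E_{11}$ on the left and right by words in $U_{k,n},V_{k+1,n}$ reaches every matrix unit. The paper compresses this into a single sentence about the orbit of $E_{11}$ under row/column permutations, whereas you spell out the intermediate matrix units explicitly; your preliminary remark that the algebra is automatically $*$-closed is a nice touch that legitimizes the use of adjoints, though one could equally well avoid them by producing $E_{k+2,k+1}=U_{k,n}E_{k+1,k+1}$ directly.
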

\begin{proof}
Recall that multiplying a matrix by a permutation matrix from the left/right results in applying this permutation on its rows/columns, respectively. The permutations represented by  $U_{k,n},V_{k+1,n}$ are the cycles $\left(k+1...n\right)$ and $\left(1...k+1\right)$ and the two together act transitively on the set $\{1,..,n\}$. This means that the orbit of $E_{11}$ contains the element $E_{ij}$ for every $1 \leq i,j \leq n$, which generate $\Mn$.
\end{proof}

\subsection{Proof of Theorem \ref{Thm:free products of commutative algebras}}
Consider two compact metrizable spaces $Y$ and $Z$ with no isolated points. Then the corresponding \csalgs  $B=C(Y)$ and $C=C(Z)$ are unital and separable. Consider the free product \csalg $A=B*C$. It will be clear that the proof below works just as well for $A=B*C*D$ where $D$ is any unital separable \csalg, but for simplicity of notation, we ignore this third free factor. We shall show that the trace space of $A$ is a Poulsen simplex. By Lemma \ref{lem:approximating-sums-of-two-extreme-points} it is enough to approximate an element of the form $\varphi=\frac{1}{2}(\varphi_1+\varphi_2)$ where $\varphi_1$ and $\varphi_2$ are extreme traces. 

Fix a finite subset $F$ in $A$ and $0<\epsilon<1$.  Our goal is to find an extreme trace $\varphi '$ of $A$ such that $|\varphi(a)-\varphi '(a)|<\epsilon$ for all   $a \in F$. 

We shall assume without loss of generality that $F$ is contained in the algebraic free product, that is in the dense $*$-algebra generated by the components $B\cup C\subset A$. In particular, there are finite sets $F_B\subset B $  and $F_C\subset C$, and a positive integer $d$ such that
each of the  element of $F$ is a linear combination of at most $d$ summands, and  each such summand  is a  product of at most $d$ elements of $F_B\cup F_C$.  Upon normalizing, we may further assume that the coefficients of each of those linear combinations are of modulus at most $1$, and that $F_B$ and $F_C$ are contained the unit ball. 
\\

Let us briefly explain our strategy. In the first step, we construct a representation $\pi$ into a von Neumann algebra $M$ with trace $\tau$ such that $\tau\circ \pi=\varphi$. While $M$ is a factor, $\pi(A)''\subseteq M$ is not. In the second step, we define a representation $\pi_0$ which is an asymmetric modification of $\pi$. This symmetry-breaking is crucial for later enriching the algebra $\pi_0 (A)''$ via small perturbations. While $\tau\circ \pi_0$ may not agree with $\tau\circ \pi$, it can be chosen to be as close to it as we wish in trace. In the third step we apply Lemma \ref{lemma:unitary_packing} in order to perturb $\pi_0$ into a new representation $\tilde{\pi}$. In the remaining steps we show that $\tilde{\pi}(A)''=M$ and that $\tilde{\pi}(a)$ and $\pi(a)$ are close in norm for every $a\in F$. This will imply that $\tilde{\varphi}=\tau \circ \tilde{\pi}$ is an extreme trace which is close to $\varphi$. Let us now elaborate.\\

{\bf Step 1: defining a joint representation.}
For $i=1,2$, choose tracial representations $(\rho_i,\tau_i)$ so that $\varphi_i=\tau_i\circ \rho_i$. Since $\varphi_i$ is an extreme trace, $N_i:=\rho_i(A)''$ is a factor. We note that as $B$ and $C$ are separable the representations $\rho_1$ and $\rho_2$ are defined on a separable Hilbert space.  

One could associate with $\varphi$ a tracial representation as a direct sum of $\rho_1$ and $\rho_2$.
However, we shall choose a tracial representation into a larger von Neumann algebra so that there is room for perturbations in it.
To that end, fix an even integer $n>\frac{4}{\eps}$, and  consider the algebra $M:=\Mn(N_1 \otimes N_2)$ of $n$-by-$n$ matrices with values in $N_1\otimes N_2$. $M$ is isomorphic to $\Mn\otimes N_1 \otimes N_2$, which is a tensor product of factors and thus a factor itself.
The unique trace on it is $\tau:x\mapsto \frac{1}{n} \sum_{i=1}^{n} \tau_1 \otimes \tau_2 (x_{ii})$. 

Define the following representation $\pi:A\to M$ by:

\begin{equation*}
a \mapsto 
 \left(
    \begin{array}{r@{}c|c@{}l}
  &    \begin{smallmatrix}
        \rho_1(a) \otimes 1 & & 0 \\
          &\ddots&\\
        0 & & \rho_1(a) \otimes 1 \rule[-1ex]{0pt}{2ex}
      \end{smallmatrix} & \raisebox{-5pt}{\mbox{\huge0}}& \rlap{\kern6mm$\Big\updownarrow \frac{1}{2} n $}\\\hline
  &    \raisebox{-5pt}{\mbox{\huge0}} &  
       \begin{smallmatrix}\rule{0pt}{2ex}
         1 \otimes \rho_2(a) & &  0  \\
        
         & \ddots & \\
        0  & & 1 \otimes \rho_2(a)
      \end{smallmatrix}    &  \rlap{\kern6mm $\Big\updownarrow \frac{1}{2} n$}
    \end{array} 
\right)
\end{equation*}
Note that $\varphi = \tau \circ \pi$. \\

{\bf Step 2: desymmetrizing the representation}
Consider the representation $\pi_0:A\to M$ which agrees with $\pi$ on $B$, but on $C$ we slightly change the block structure:
\begin{equation*}
C \ni c
 \mapsto 
 \left(
    \begin{array}{r@{}c|c@{}l}
  &    \begin{smallmatrix}
        \rho_1(c) \otimes 1 & & 0 \\
          &\ddots&\\
        0 & & \rho_1(c) \otimes 1 \rule[-1ex]{0pt}{2ex}
      \end{smallmatrix} & \raisebox{-5pt}{\mbox{\huge0}}& \rlap{\kern6mm$\Big\updownarrow \frac{1}{2} n+1 $}\\\hline
  &    \raisebox{-5pt}{\mbox{\huge0}} &  
       \begin{smallmatrix}\rule{0pt}{2ex}
         1 \otimes \rho_2(c) & &  0  \\
        
         & \ddots & \\
        0  & & 1 \otimes \rho_2(c)
      \end{smallmatrix}    &  \rlap{\kern6mm $\Big\updownarrow \frac{1}{2}n-1$}
    \end{array} 
\right)
\end{equation*}

In particular, for any $c\in C$, $\pi(c)$ and $\pi_0(c)$ agree at all entries except for two. The trace $\varphi_0:=\tau\circ \pi_0$ satisfies that for all $a$ in the unit ball of $A$ (and in particular for any $a\in F$):

\begin{equation} \label{first approx}
    |\varphi(a)-\varphi_0(a)|=|\tau(\pi(a)-\pi_0(a))|\leq\frac{2}{n}<\frac{\eps}{2}.
\end{equation}\\

{\bf Step 3: perturbing the representation}
While $M$ is a factor, the von Neumann subalgebra generated by the image of $\pi_0$ is not. The next step is, therefore, to use Lemma \ref{lemma:unitary_packing} in order to perturb the representation slightly in a way that its image will generate $M$, and thus will be a factor representation. 
To that end, set $U:= U_{\frac{1}{2} n,n}$, $V:= V_{\frac{1}{2} n+1,n}$, $P:=E_{11}$ as defined in \S\ref{subsec:generators-of-Mn}, and think of them as elements of $M$ via the natural embedding $\Mn(\mathbb{C})\subseteq M$. The von Neumann algebra  $W^*(\pi_0(B),U,P)$ is commutative. We may therefore apply  Lemma \ref{lemma:unitary_packing} to the restricted representation  ${\pi_0}_{\mkern 1mu \vrule height 2ex\mkern2mu B}$, and as a result
obtain a new representation $\sigma_B: B \rightarrow M$ such that $W^*(\sigma_B(B)) = W^*(\pi_0(B), U,P)$ and moreover $\max_{b\in F_B}\| \sigma_B(b)-\pi_0(b)  \| < \frac{\eps}{2d^2}$.
We apply Lemma \ref{lemma:unitary_packing} once again, this time for the representation ${\pi_0}_{\mkern 1mu \vrule height 2ex\mkern2mu C}$ and the commutative von Neumann algebra $W^*(\pi_0(C),V)$. We obtain a representation $\sigma_C: C \rightarrow M$ such that  $W^*(\sigma_C(C)) = W^*(\pi_0(C), V)$ and $\max_{c\in F_C}\| \sigma_C(c) - \pi_0(c) \| < \frac{\eps}{2d^2}$. We set $\tilde{\pi}:=\sigma_B*\sigma_C$, that is, the unique representation of $A$ whose restriction to $B$ and $C$ is $\sigma_B$ and $\sigma_C$, respectively. Its corresponding trace is denoted by  $\tilde{\varphi}=\tau\circ \tilde{\pi}$.
\\

We have thus defined our final perturbed trace $\tilde{\varphi}$. The proof will be complete once we prove that $\tilde{\varphi}$ is extreme, and that it approximates $\varphi$.

\begin{Claim} $\tilde{\varphi}$ is extreme.
\end{Claim}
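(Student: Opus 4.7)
The plan is to invoke Theorem \ref{thm:extremal-factor}: since $M = \Mn(N_1 \otimes N_2) \cong \Mn \otimes N_1 \otimes N_2$ is a tensor product of factors, hence itself a factor, and $\tilde\varphi = \tau \circ \tilde\pi$ with $\tau$ the unique trace on $M$, it suffices to show that $\tilde\pi(A)'' = M$.

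First I would gather what sits inside $\tilde\pi(A)''$ by construction. This algebra contains $\sigma_B(B)'' = W^*(\pi_0(B), U, P)$ and $\sigma_C(C)'' = W^*(\pi_0(C), V)$, so in particular it contains the three matrices $U = U_{n/2,n}$, $V = V_{n/2+1,n}$, and $P = E_{11}$. By Lemma \ref{generating_matrices} these generate $\Mn$ as an algebra, so the full subalgebra $\Mn \otimes 1 \otimes 1 \subseteq M$ lies in $\tilde\pi(A)''$. In particular, all matrix units $E_{ij}$ are at our disposal.

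The key step is to identify the corner $E_{11} M E_{11} \cong N_1 \otimes N_2$ inside $\tilde\pi(A)''$. Using the block structure of $\pi_0$, a direct block calculation gives
\[
E_{11}\,\pi_0(b)\,E_{11} \;\longleftrightarrow\; \rho_1(b) \otimes 1 \quad \text{and} \quad E_{1n}\,\pi_0(b)\,E_{n1} \;\longleftrightarrow\; 1 \otimes \rho_2(b)
\]
in $N_1 \otimes N_2$; the first reads off the top block while the second transports the bottom block to the $(1,1)$-corner using matrix units. The same holds for $c \in C$. Since $\rho_1(A)'' = N_1$ and $\rho_2(A)'' = N_2$, the resulting elements generate $N_1 \otimes 1$ and $1 \otimes N_2$, which together generate the whole corner $N_1 \otimes N_2$.

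To conclude, once the corner $E_{11} M E_{11}$ is contained in $\tilde\pi(A)''$, any $a \in N_1 \otimes N_2$ can be spread across the diagonal via
\[
\sum_{i=1}^n E_{i1}\,(a E_{11})\,E_{1i} \;=\; I_n \otimes a,
\]
which shows $1 \otimes N_1 \otimes N_2 \subseteq \tilde\pi(A)''$. Combined with $\Mn \otimes 1 \otimes 1$, these generate all of $M$. So $\tilde\pi(A)'' = M$, and extremality of $\tilde\varphi$ follows from Theorem \ref{thm:extremal-factor}. The only step requiring care is the corner computation, but it is a routine block manipulation with no analytic obstacle; the conceptual payoff of the asymmetric desymmetrization in Step~2 is precisely that it forced $U$, $V$ and $P$ (hence all of $\Mn$) to be available after the perturbation.
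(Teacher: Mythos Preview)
Your proof is correct and follows essentially the same approach as the paper's: both show $\tilde\pi(A)'' = M$ by first extracting $U,V,P$ (hence all of $\Mn$) from $W^*(\sigma_B(B))$ and $W^*(\sigma_C(C))$ via Lemma~\ref{generating_matrices}, then using the matrix units to isolate $\rho_1(a)\otimes 1$ and $1\otimes\rho_2(a)$ from the block-diagonal $\pi_0(a)$, and finally invoking $\rho_i(A)'' = N_i$. Your corner-and-spread formulation ($E_{11}ME_{11}$ then $\sum_i E_{i1}\,\cdot\,E_{1i}$) is just a slight rephrasing of the paper's ``multiply by $E_{11}$ or $E_{nn}$, then by permutation matrices''.
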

\begin{proof}
This is equivalent to showing that $\tilde{\pi}$ is a factor representation. Denote 
$M_0:=\tilde{\pi}(A)'' \subseteq M\cong \Mn\otimes N_1\otimes N_2$. 
Then  $M_0$ contains $\sigma_B(B)$ and $\sigma_C(C)$, and therefore it contains also $W^*(\pi_0(A), U,V,P)$. By Lemma \ref{generating_matrices}, $U$, $V$ and $P$ together generate $\Mn$, so in particular, each $E_{ij}$ is in $M_0$ as well as every permutation matrix. We conclude that $\rho_1(A)\otimes 1 \otimes E_{ij}$ and  $1 \otimes \rho_2(A) \otimes E_{ij}$ belong to $M_0$ for any $1\leq i,j\leq n$. Indeed, by multiplying $\pi_0(a)$ and $\pi_0(a)$ by $E_{11}$ or by $E_{nn}$ we see that the above operators are in $M_0$ for particular $i,j$, and by further multiplying them by permutation matrices, we see that all of the above are in $M_0$. Upon taking the WOT-closures we get that  $N_1$ and $N_2$ are contained in $M_0$. Hence $M_0=M$.
\end{proof}
 
\begin{Claim}
$|\tilde{\varphi}(a)-\varphi(a)|<\eps$ for any $a\in F$.
\end{Claim}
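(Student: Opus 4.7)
My plan is to use the triangle inequality
\[
|\varphi(a) - \tilde\varphi(a)| \leq |\varphi(a) - \varphi_0(a)| + |\varphi_0(a) - \tilde\varphi(a)|
\]
and bound each summand by $\eps/2$. The first term is already controlled by \eqref{first approx}. So the task reduces to showing $|\varphi_0(a) - \tilde\varphi(a)| < \eps/2$, which I will do by exploiting the particular algebraic structure of elements in $F$ together with the sharp norm approximation of $\sigma_B, \sigma_C$ on the generating sets $F_B, F_C$.

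Recall that each $a\in F$ is a sum of at most $d$ terms, each of the form $\lambda\, x_1 \cdots x_r$ with $r\leq d$, $|\lambda|\leq 1$ and each $x_j\in F_B\cup F_C$ of norm at most $1$. Since $\tau$ is a tracial state, $|\tau(y)| \leq \|y\|$ for every $y\in M$, so it suffices to estimate $\|\pi_0(m) - \tilde\pi(m)\|$ for each monomial $m = x_1\cdots x_r$. Note that $\tilde\pi(x_j)$ equals $\sigma_B(x_j)$ or $\sigma_C(x_j)$ depending on which factor $x_j$ lies in, and in either case the key input from Step 3 is the norm bound $\|\tilde\pi(x_j) - \pi_0(x_j)\| < \eps/(2d^2)$.

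Next I would apply the standard telescoping identity
\[
\pi_0(x_1)\cdots\pi_0(x_r) - \tilde\pi(x_1)\cdots\tilde\pi(x_r)
= \sum_{j=1}^{r} \tilde\pi(x_1)\cdots\tilde\pi(x_{j-1})\bigl(\pi_0(x_j)-\tilde\pi(x_j)\bigr)\pi_0(x_{j+1})\cdots\pi_0(x_r),
\]
using that $*$-homomorphisms are contractive so every $\|\pi_0(x_i)\|, \|\tilde\pi(x_i)\|\leq 1$. Each of the $r\leq d$ terms on the right has norm at most $\eps/(2d^2)$, so $\|\pi_0(m) - \tilde\pi(m)\| \leq d \cdot \eps/(2d^2) = \eps/(2d)$. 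Summing over the at most $d$ monomials with coefficients of modulus at most $1$ yields $|\varphi_0(a) - \tilde\varphi(a)| \leq d \cdot \eps/(2d) = \eps/2$, as desired.

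There is no real obstacle here; this is just a bookkeeping computation that justifies the specific choice of the constants $\eps/(2d^2)$ and $n > 4/\eps$ made earlier. In fact, those constants were chosen precisely so that after the telescoping (which costs a factor of $d$) and the sum over monomials (another factor of $d$), the total error stays below $\eps/2$, matching the $\eps/2$ error already incurred in the desymmetrization Step 2. Combining the two bounds via the triangle inequality closes the proof of the claim, and together with the previous claim that $\tilde\varphi$ is extreme, establishes that $\tilde\varphi$ is an extreme trace approximating $\varphi$ on $F$ to within $\eps$, completing the proof of Theorem \ref{Thm:free products of commutative algebras}.
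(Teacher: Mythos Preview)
Your proof is correct and follows essentially the same approach as the paper. The paper's own argument is simply more terse---it asserts that $\|\pi_0(a)-\tilde{\pi}(a)\|<\eps/2$ for $a\in F$ without writing out the telescoping identity, whereas you have made that step explicit---and both then combine this bound with (\ref{first approx}) via the triangle inequality.
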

\begin{proof}
We have that $\|\pi_0(a)-\tilde{\pi}(a)\|<\frac{\eps}{2d^2}$ for all $ a \in F_B\cup F_C$. We recall that each element of $F$ is a linear combination of $d$ summands with coefficients of modulus at most $1$, and that each of those summands is a product of at most $d$ elements of $F_B\cup F_C$. Thus $\|\pi_0(a)-\tilde{\pi}(a)\|<\frac{\eps}{2}$ holds for all $a\in F$. As a result
\begin{equation}\label{second approx}
    | \tilde{\varphi}(a)-\varphi_0(a)|=| \tau(\tilde{\pi}(a)-\pi_0(a))|\leq \|\pi_0(a)-\tilde{\pi}(a)\|<\frac{\eps}{2}, \qquad \forall a\in F. 
\end{equation}
Combining (\ref{second approx}) with (\ref{first approx}) we get:
\begin{equation}
    \sup_{a\in F}| \tilde{\varphi}(a)-\varphi(a)|<\eps 
\end{equation}
The proof is complete.
\end{proof}

\section{Free products of matrix algebra} \label{sec:matrix algebras}
In this section, we focus our attention on traces on the free product algebra $\MnMn$ for $n\geq 4$ and prove  Theorem \ref{Second Theorem}. 

\subsection{Generating sets in von Neumann algebras}

We will often need to argue that certain sets of operators generate a given von Neumann algebra.

\begin{Lemma}\label{lem:generating-factors-from-corners}
	Let $M$ be a von Neumann algebra,  and let $q_1,q_2\in M$ be projections with $q_1+q_2=1$.  Assume $v \in M$ is a partial isometry with $v^*v=q_1$ and such that $vv^*$ is a sub-projection of $q_2$. Then the corner $q_2Mq_2$ together with $v$ generate $M$ as a von Neumann algebra.
\end{Lemma}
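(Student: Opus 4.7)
Let $N = W^*(q_2 M q_2, v)$; I want to show $N = M$. The key preliminary observation is that the hypotheses force $v = q_2 v q_1$. Indeed, $v^*v = q_1$ gives $v = v q_1$, while $vv^* \le q_2$ gives $q_2 v v^* = vv^*$, so $q_2 v = q_2 v v^* v = vv^*v = v$. Taking adjoints, $v^* = q_1 v^* q_2$. Also $q_2 \in N$ (as the unit of the corner $q_2 M q_2$) and hence $q_1 = 1 - q_2 \in N$.

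The plan is to use the Peirce decomposition $x = q_1 x q_1 + q_1 x q_2 + q_2 x q_1 + q_2 x q_2$ for an arbitrary $x \in M$, and show that each of the four summands lies in $N$. The corner $q_2 x q_2$ lies in $q_2 M q_2 \subseteq N$ trivially. For each of the other three summands, the strategy is to multiply by $v$ or $v^*$ on the appropriate side to ``conjugate'' the piece into the corner $q_2 M q_2$. Concretely:
\begin{itemize}
\item $q_1 x q_2 = v^*(v x q_2)$, and $v x q_2 = q_2 v x q_2 \in q_2 M q_2$ since $q_2 v = v$.
\item $q_2 x q_1 = (q_2 x v^*) v$, and $q_2 x v^* = q_2 x v^* q_2 \in q_2 M q_2$ since $v^* q_2 = v^*$.
\item $q_1 x q_1 = v^*(v x v^*) v$, and $v x v^* = q_2 v x v^* q_2 \in q_2 M q_2$.
\end{itemize}
Since $v, v^* \in N$ and $q_2 M q_2 \subseteq N$, each summand is a product of elements of $N$ and therefore lies in $N$. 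This yields $x \in N$, so $N = M$.

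I do not foresee a real obstacle here; the lemma is essentially the observation that a partial isometry whose source projection is the complement of a large-enough target projection lets one reconstruct all Peirce components from a single corner. The only thing to be careful about is the identity $v = q_2 v q_1$, which is what makes each of the three ``translated'' pieces actually land in the corner $q_2 M q_2$ rather than merely in $M$.
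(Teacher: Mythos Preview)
Your proof is correct. The key identities $q_1 x q_2 = v^*(q_2 v x q_2)$, $q_2 x q_1 = (q_2 x v^* q_2)v$, and $q_1 x q_1 = v^*(q_2 v x v^* q_2)v$ are exactly right, and together with the trivial inclusion $q_2 x q_2 \in q_2 M q_2$ they exhibit every $x \in M$ as a finite sum of products of elements from $\{v, v^*\} \cup q_2 M q_2$. This in fact shows that $M$ equals the \emph{$*$-algebra} generated by $q_2 M q_2$ and $v$, which is slightly stronger than the von Neumann algebra statement.

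The paper takes a different route: rather than writing each $x \in M$ explicitly in terms of the generators, it shows $M \subseteq S''$ (where $S$ is the $*$-algebra generated by $\{v\} \cup q_2 M q_2$) via the double commutant theorem, by checking that any $x \in M$ commutes with any $y \in S'$. The verification there uses the very same identities you found, e.g.\ $q_1 x q_2 = v^* q_2 v x q_2$, but applies them to move $y$ past $x$ block by block. Your approach is more direct and avoids the commutant argument altogether; the paper's approach has the virtue of making the role of the von Neumann closure explicit, though in this case (as your argument shows) no closure is actually needed.
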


\begin{proof}
    Let $S$ be the $*$-algebra generated by $\{v\}\cup q_2Mq_2$. Then $S$ contains, in particular, the elements $q_1,q_2,v,v^*$. We will show that $M \subset S''$. 
    Consider arbitrary $x\in M$ and $y\in S'$. Then a direct computation shows that $q_ixyq_j=q_iyxq_j$ for $i,j\in \{0,1\}$. 
    For example, the fact that $v^*$, $q_1$ and $q_2vxq_2$ are in $S$ implies that
    \[
        q_1xyq_2=q_1xq_2y=v^*q_2vxq_2y=yv^*q_2vxq_2=yq_1xq_2=q_1yxq_2 
    \]
    The other cases are obtained in a similar way. 
    Hence: 
    \begin{align*}
        xy=(q_1+q_2)xy(q_1+q_2)=
        (q_1+q_2)yx(q_1+q_2)=yx
    \end{align*}
\end{proof}

\begin{Lemma}\label{lem:generate tensors}
Let $M_1, M_2$ be \vnalgs, with $p \in M_1$ a non-zero projection, and assume $M_1$ is a factor. Then the algebra $p \otimes  M_2 $, together with $M_1 \otimes 1_{M_2}$ generate $M_1 \otimes M_2$.
\end{Lemma}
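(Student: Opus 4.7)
The plan is to show that $1_{M_1} \otimes M_2$ lies in the von Neumann algebra $N := W^*\bigl(p \otimes M_2,\; M_1 \otimes 1_{M_2}\bigr)$. Once this is established, the conclusion is immediate, since $N$ then contains both $M_1 \otimes 1_{M_2}$ and $1_{M_1} \otimes M_2$, and these together generate $M_1 \otimes M_2$.

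To prove this inclusion, I would fix $y \in M_2$ and trace what can be built from $p \otimes y$. This element lies in $N$, and multiplying on the left and right by elements of $M_1 \otimes 1_{M_2}$ yields
\[
(a \otimes 1_{M_2})(p \otimes y)(b \otimes 1_{M_2}) = apb \otimes y \in N
\]
for all $a,b \in M_1$. Taking linear combinations and passing to weak-$*$ limits, we conclude that $x \otimes y \in N$ for every $x$ in the weak-$*$ closure of the two-sided ideal $M_1 p M_1 \subseteq M_1$.

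The only substantive step is to argue that this weak-$*$ closure is all of $M_1$. Since $M_1$ is a factor with non-zero projection $p$, this follows from a standard observation: the weak-$*$ closure $I$ of $M_1 p M_1$ is a weak-$*$-closed two-sided ideal of $M_1$, hence of the form $zM_1$ for some central projection $z \in Z(M_1)$; being a factor, $Z(M_1) = \mathbb{C}\cdot 1_{M_1}$, so $z \in \{0,1_{M_1}\}$; and $p \in I$ with $p \neq 0$ forces $z = 1_{M_1}$. Equivalently, the central support of any non-zero projection in a factor is the unit. Applying this back, we obtain $1_{M_1} \otimes y \in N$ for every $y \in M_2$, which is exactly what was needed. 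I do not anticipate any serious obstacle here — the argument is essentially the factor-ideal fact combined with one bilinear manipulation of tensors.
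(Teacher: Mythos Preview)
Your proof is correct and follows essentially the same route as the paper's: reduce to showing $1_{M_1}\otimes M_2\subseteq N$, use the identity $(a\otimes 1)(p\otimes y)(b\otimes 1)=apb\otimes y$, and invoke that a factor has no non-trivial weak-$*$-closed two-sided ideals to conclude $1_{M_1}$ lies in the closure of $M_1pM_1$. The only cosmetic difference is that you spell out the central-projection justification for the ideal fact, whereas the paper simply cites it.
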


\begin{proof}
Since $M_1\otimes M_2$ is generated by $M_1\otimes 1_{M_2}$ and $1_{M_1}\otimes M_2$, it suffices to show that we can generate $1_{M_1}\otimes M_2$.

Since $M_1$ is a factor, it has no nontrivial WOT-closed ideals, so $1_{M_1}$ can be approximated (in the WOT) by elements of the form $\sum_{i=1}^n x_ipy_i$ where $x_i, y_i\in M$. Therefore, for any $z\in M_2$, we can approximate $1\otimes z$ by elements of the form
$$\left(\sum_{i=1}^n x_{i}^*py_i\right) \otimes z = \sum_{i=1}^n (x_i\otimes 1_{M_2})(p\otimes z)(y_i\otimes 1_{M_2}).$$
\end{proof}

\subsection{An amplification lemma}
We will need the following standard construction, which allows one to embed a given factor as a corner in a slightly larger one:

\begin{Lemma}\label{lem:embedding N as a corner}
Let $(N,\tau)$ be a finite factor and let $r\in N$ be a projection. Then there exists a finite factor $(\tilde{N},\tilde{\tau})$ and a projection $q\in \tilde{N}$ such that $N\cong q\tilde{N}q$ and $\tilde{\tau}(1_{\tilde{N}}-q)=\tilde{\tau}(r)$.
Moreover, the finite factor $\tilde{N}$ may be chosen to be a corner of a tracial von Neumann algebra $M$ which does not depend on $r$.
\end{Lemma}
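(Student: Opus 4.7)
Set $t = \tau(r) \in [0,1]$. The plan is to take $M := \Matrix_2(N) \cong \Matrix_2(\mathbb{C}) \otimes N$ with its product trace $\tau_M := \tr_2 \otimes \tau$, and to realize $\tilde N$ as a suitable corner of $M$. Since the tensor product of factors is a factor (as recalled in \S\ref{sec:general statements}), $M$ is itself a finite factor, and crucially the data of $M$ does not depend on $r$.

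The key step is to exhibit nested projections $q \leq p$ in $M$ satisfying $\tau_M(q) = \tfrac{1}{2}$ and $\tau_M(p) = \tfrac{1+t}{2}$. If $N$ is II$_1$, then so is $M$, and such $q, p$ exist by the interpolation property for projections in a II$_1$ factor recalled in \S\ref{sec:general statements}. If $N = \Matrix_n(\mathbb{C})$ is finite-dimensional, then $t = k/n$ with $k = \mathrm{rank}(r)$, so in $M \cong \Matrix_{2n}(\mathbb{C})$ we may take $q, p$ of ranks $n$ and $n+k$ with $q \leq p$; the corresponding $\tau_M$-values work out. Set $\tilde N := pMp$. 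This is a finite factor since corners of factors are factors, and its unique normalized trace is $\tilde\tau = \tau_M(p)^{-1} \tau_M\restrict{\tilde N}$.

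To see $N \cong q\tilde N q$, note that $q \leq p$ gives $q\tilde N q = qMq$. Let $e_0 := E_{11} \otimes 1_N \in \Matrix_2(\mathbb{C}) \otimes N$, where $E_{11}$ is the rank-one projection onto the first coordinate of $\mathbb{C}^2$. Then $\tau_M(e_0) = \tfrac{1}{2} = \tau_M(q)$ and $e_0 M e_0 = \mathbb{C}E_{11} \otimes N \cong N$. By the equivalence of projections of the same trace in a finite factor, there is a partial isometry $v \in M$ with $v^*v = q$ and $vv^* = e_0$, and then $x \mapsto v^* x v$ is a $*$-isomorphism $e_0 M e_0 \to qMq$, which composed with $N \cong e_0 M e_0$ yields $\psi \fromto{N}{qMq}$. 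Since $\psi$ is a $*$-isomorphism of finite factors, it intertwines their unique normalized traces; hence the image of $r$ in $\tilde N$ has trace
\[
    \tilde\tau(\psi(r)) = \tilde\tau(q)\cdot \tau(r) = \tfrac{1}{1+t}\cdot t = \tfrac{t}{1+t}.
\]
This matches the direct computation
\[
    \tilde\tau(1_{\tilde N} - q) = \tilde\tau(p - q) = \frac{\tau_M(p) - \tau_M(q)}{\tau_M(p)} = \frac{t/2}{(1+t)/2} = \frac{t}{1+t},
\]
completing the verification. No step constitutes a serious obstacle; the only delicate point is keeping careful bookkeeping of the three traces $\tau$, $\tilde\tau$, and $\tau_M$ and of the normalization factors arising when one passes between $N$, the corner $\tilde N \subseteq M$, and the ambient $M$.
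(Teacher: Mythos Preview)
Your proof is correct and follows the same overall strategy as the paper: take $M = \Matrix_2(N)$ and realize $\tilde N$ as a corner $pMp$ with $q \leq p$ of the right traces. The paper, however, simply writes down explicit choices $q = E_{11}^{(2)} \otimes 1_N$ and $p = q + E_{22}^{(2)} \otimes r$, so that $N = q\tilde N q$ is literally the upper-left corner (no auxiliary partial isometry needed) and $1_{\tilde N} - q = E_{22}^{(2)} \otimes r$ is manifestly equivalent to $E_{11}^{(2)}\otimes r$ via the partial isometry $E_{12}^{(2)} \otimes r$. This sidesteps your case split (II$_1$ versus finite-dimensional) and the appeals to interpolation and equivalence of equal-trace projections, yielding a shorter and more concrete argument; your route is more structural but reaches the same conclusion.
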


\begin{proof}
Embed $N$ as the upper left corner of $M=\Matrix_2(N)$, and define $q=E_{11}^{(2)}\otimes 1_{N}\in M$.  Let $q':=q+ E_{22}^{(2)}\otimes r$ and set $\tilde{N} = q'Mq'$. Then $\tilde{N}$ is a factor since it is a corner of a factor, and let $\tilde{\tau}$ be the unique trace on it. The fact that $N\cong q\tilde{N}q$ is clear from the definitions. Moreover, $1_{\tilde{N}} - q$ is equivalent to $r$ (as witnessed by the partial isometry $E_{12}^{(2)}\otimes r$). It follows that $\tilde{\tau}(1_{\tilde{N}}-q)=\tilde{\tau}(r)$.  
\end{proof}
\begin{Rmk} 
The construction of $\tilde{N}$ above is called the amplification of $N$ by $t=1+\tau(r)$ often denoted by $N^t$. See \cite[\S4.2]{anantharaman2017introduction}.
\end{Rmk}

\subsection{Representations of $\Mn$}
The following facts are standard and may be found in greater detail in \cite{kadison1986fundamentals}.
Let  $\pi: \Mn  \to \BH$ be a unital $*$-representation. Then $\pi(E_{11}),...,\pi(E_{nn})$ are mutually orthogonal projections that sum up to $1_\Hil$. These projections therefore define a direct sum decomposition $\Hil=\Hil_1 \oplus...\oplus \Hil_n$, and each $\pi(E_{ij})$ for $i\ne j$ must be a  partial isometry from $\Hil_j$ onto $\Hil_i$. 
Conversely, if $\Hil$ admits a direct sum decomposition $\Hil=\Hil_1\oplus...\oplus \Hil_n$ with corresponding projections $p_1,...,p_n$, and $v_2,\ldots, v_n$ are partial isometries from $\Hil_j$ to $\Hil_1$, then the mapping $\pi$ that takes $E_{ii}\mapsto p_i$ and $E_{ij}\mapsto v_{i}^*v_{j}$ (where $v_1 = p_1$) is a unital $*$-representation of $\Mn$ into $\BH$. 

Furthermore, assume that $\pi$ is a unital representation of $\Mn$ into some \vnalg $M$. Then there exists a von Neumann algebra $N$ and an isomorphism $M\cong \Mn(N)$ such that under this isomorphism, $\pi$ is simply the standard representation $\Mn\to \Mn(\C)\subseteq \Mn(N)$  \cite{kadison1986fundamentals}. $N$ is in fact taken to be the commutant of $\pi(\Mn)$.  Clearly, $M$ is tracial (resp. a factor) if and only if $N$ is. We add that the same conclusion is true for \csalgs by the same proof, or by \cite[Thm I.10.7]{davidson1996c}.

Finally, we note that $\Mn$ is generated by the set $\{E_{1j}\}_{j=2}^n$ and thus any unital $*$-representation is determined by its values on this set. Putting these observations altogether we obtain:

\begin{Lemma}\label{lem:reps of Mn, genearotrs and relations}
Let $N$ be a von Neumann algebra, and suppose $u_2,...,u_n \in N$ are unitaries. Then the assignment:
\[
    E_{1j}\mapsto u_j\otimes E_{1j} \in \Mn(N) 
\]
extends in a unique way to a unital *-representation $\Mn\to \Mn(N)$.
\end{Lemma}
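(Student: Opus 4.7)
The plan is to deduce this lemma directly from the general correspondence recalled in the paragraph preceding the statement, which parametrizes unital $*$-representations of $\Mn$ on a Hilbert space in terms of an orthogonal system of projections summing to the identity together with a family of partial isometries into the first summand. Since the same conclusion holds for representations into an arbitrary \vnalg (the cited results from \cite{kadison1986fundamentals} and \cite{davidson1996c} are formulated in that generality), I can apply it directly inside $\Mn(N)$.

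\textbf{Uniqueness.} As noted just before the statement, $\Mn$ is generated as a unital $*$-algebra by $\{E_{1j}\}_{j=2}^n$ (one recovers $E_{jj}=E_{1j}^*E_{1j}$, $E_{11}=E_{11}$, and $E_{ij}=E_{1i}^*E_{1j}$ for $i,j\geq 2$). Hence any two unital $*$-representations agreeing on $\{E_{1j}\}_{j\geq 2}$ must coincide.

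\textbf{Existence.} I would take the projections
\[
p_i := 1_N \otimes E_{ii} \in \Mn(N), \qquad i=1,\ldots,n,
\]
which are mutually orthogonal and sum to $1_{\Mn(N)}$, and set
\[
v_j := u_j \otimes E_{1j} \in \Mn(N), \qquad j=2,\ldots,n.
\]
A direct calculation using that each $u_j$ is unitary gives
\[
v_j^*v_j = u_j^*u_j \otimes E_{jj} = p_j \quad \text{and} \quad v_jv_j^* = u_ju_j^* \otimes E_{11} = p_1,
\]
so each $v_j$ is a partial isometry from the range of $p_j$ to the range of $p_1$. The construction recalled in the paragraph before the lemma then produces a unique unital $*$-representation $\pi:\Mn\to \Mn(N)$ with $\pi(E_{ii})=p_i$ and $\pi(E_{1j})=v_1^*v_j=p_1v_j=v_j=u_j\otimes E_{1j}$ (with the convention $v_1=p_1$). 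This is exactly the assignment prescribed in the statement.

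There is no real obstacle here; the only mild subtlety is confirming that the cited construction of $*$-representations of $\Mn$ from systems of matrix units works verbatim with a \vnalg replacing $\BH$, which is the content of the last remark in the paragraph preceding the lemma. Once that is accepted, the proof reduces to the two short verifications above.
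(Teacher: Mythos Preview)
Your proposal is correct and follows exactly the approach the paper intends: the lemma is stated in the paper without a formal proof, merely as the outcome of ``putting these observations altogether'' from the preceding paragraph, and you have simply written out those observations explicitly. One tiny slip: in your uniqueness paragraph you wrote ``$E_{11}=E_{11}$,'' which does not explain how $E_{11}$ is recovered from $\{E_{1j}\}_{j\ge 2}$; the intended relation is $E_{11}=E_{1j}E_{1j}^*$ for any $j\ge 2$ (or $E_{11}=1-\sum_{j\ge 2}E_{jj}$ using unitality), but this is cosmetic and does not affect the argument.
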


\subsection{Representations of $\MnMn$}\label{reps of Mn and MnMn}
Let us now turn our attention to representations of $\MnMn$.
This algebra is generated by two sets of matrix units, which we denote $e_{ij}$ and $f_{ij}$ (we continue to write $E_{ij}$ for the matrix units in $\Mn$ --- the new notation refers to specific elements in the algebra $\MnMn$).
In the proof of Proposition 3.7 (ii) of \cite{musat2021factorizable}, the authors construct unital homomorphisms with domain $\MnMn$ as follows:
let $A$ be a unital \csalg and let $u_2, \ldots, u_n \in A$ be unitaries.
Define a homomorphism $\pi:\MnMn\to \Mn(A)$ by $\pi(e_{ij}) = 1_A \otimes E_{ij}$ for $i,j=1,\ldots,n$ and $\pi(f_{1j}) = u_j\otimes E_{1j}$ for $j=2,\ldots,n$.
We note that $\pi$ is surjective iff $u_2,\ldots,u_n$ generate $A$.
We further note that $\pi$ satisfies $\pi(e_{ii}) = \pi(f_{ii})$ for $i=1,\ldots,n$. 

Conversely, let $\pi$ be any unital homomorphism with domain $\MnMn$ satisfying $\pi(e_{ii}) = \pi(f_{ii})$ for $i=1,\ldots,n$.
Then by the discussion preceding Lemma \ref{lem:reps of Mn, genearotrs and relations}, we may identify the image of $\pi$ with $\Mn(A)$ for some unital \csalg $A$, in such a way that $\pi(e_{ij}) = 1_A\otimes E_{ij}$.
By assumption, we also have $\pi(f_{ii}) = 1_A\otimes E_{ii}$, which implies that $\pi(f_{1j}) = u_j\otimes E_{1j}, j=2,\ldots,n$ for some unitaries $u_2,\ldots,u_n\in A$ which generate $A$.
Letting $D$ denote the subalgebra of $\Mn$ consisting of diagonal matrices, we have described how the construction of R\o rdam and Musat leads to the following:

\begin{Prop}[M. R\o rdam and M. Musat] \label{prop:amalgamated}
$\Mn *_D \Mn \cong \Mn(C^*(F_{n-1}))$.
\end{Prop}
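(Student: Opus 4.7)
The plan is to exhibit mutually inverse unital $*$-homomorphisms between $\Mn *_D \Mn$ and $\Mn(C^*(F_{n-1}))$ by invoking the universal properties of both sides.

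For the forward map $\Phi\colon \Mn *_D \Mn \to \Mn(C^*(F_{n-1}))$, I take the construction of Musat and R\o rdam recalled just above with $A = C^*(F_{n-1})$ and $u_2,\ldots,u_n$ its canonical unitary generators. The resulting unital $*$-homomorphism $\Mn * \Mn \to \Mn(C^*(F_{n-1}))$ sends $e_{ij}\mapsto 1\otimes E_{ij}$ and $f_{1j}\mapsto u_j\otimes E_{1j}$; since both $e_{ii}$ and $f_{ii}$ map to $1\otimes E_{ii}$, it descends through the amalgamation over $D$ to give $\Phi$. Surjectivity is immediate, as the image contains all matrix units $1\otimes E_{ij}$ together with $(u_j \otimes E_{1j})(1 \otimes E_{j1}) = u_j \otimes E_{11}$, and these plainly generate $\Mn(C^*(F_{n-1}))$ as a \csalg.

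For the inverse $\Psi\colon \Mn(C^*(F_{n-1})) \to \Mn *_D \Mn$, I will use the following universal property of $\Mn(A)$, which is a restatement of the discussion preceding Lemma \ref{lem:reps of Mn, genearotrs and relations}: a unital $*$-homomorphism $\Mn(A)\to R$ is equivalent to the data of a system of matrix units $\{p_{ij}\}_{i,j=1}^n$ in $R$ with $\sum_i p_{ii}=1_R$ together with a unital $*$-homomorphism $A \to p_{11}Rp_{11}$. Taking $R = \Mn *_D \Mn$ and $p_{ij}=e_{ij}$, it suffices to produce a unital $*$-homomorphism from $C^*(F_{n-1})$ to $B := e_{11}(\Mn *_D \Mn)e_{11}$. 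For this I define
\[
    v_j := f_{1j}e_{j1} \in B, \qquad j=2,\ldots,n,
\]
and verify that each $v_j$ is a unitary of $B$: the amalgamation relations $e_{ii}=f_{ii}$ force
\[
    v_j v_j^* = f_{1j}\,e_{jj}\,f_{j1} = f_{1j}f_{jj}f_{j1} = f_{11} = e_{11},
\]
and a symmetric calculation gives $v_j^* v_j = e_{11}$. The universal property of $C^*(F_{n-1})$ then produces the desired map, hence $\Psi$.

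It remains to check that $\Psi \circ \Phi = \Id$ on the generating set $\{e_{ij}\}\cup \{f_{1j}\}$. On the matrix units $e_{ij}$ this is immediate from the construction. For $f_{1j}$, one writes $\Phi(f_{1j}) = (1\otimes u_j)(1\otimes E_{1j})$ and expands $\Psi(1\otimes u_j)$ in the ``diagonal'' form $\sum_i e_{i1}v_je_{1i}$ arising from the universal property above; multiplying by $\Psi(1\otimes E_{1j}) = e_{1j}$ and repeatedly applying $e_{jj}=f_{jj}$ and $e_{11}=f_{11}$ collapses the whole sum to $f_{11}f_{1j} = f_{1j}$. This gives injectivity of $\Phi$, and combined with surjectivity, $\Phi$ is a $*$-isomorphism. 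The only genuinely delicate step in the plan is the choice of $v_j \in B$ implementing the universal unitary for $C^*(F_{n-1})$; once that element is picked, the amalgamation relations $e_{ii}=f_{ii}$ do all the remaining work.
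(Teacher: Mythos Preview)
Your proof is correct and follows essentially the same route the paper sketches in the paragraphs immediately preceding the proposition: the forward map is the Musat--R\o rdam construction with $A=C^*(F_{n-1})$, and the inverse arises from recognizing $v_j=f_{1j}e_{j1}$ as unitaries in the corner $e_{11}(\Mn*_D\Mn)e_{11}$---you have simply made the inverse and the check $\Psi\circ\Phi=\Id$ explicit where the paper leaves this implicit in its bijection-of-representations description. One small notational slip: in your final paragraph $(1\otimes u_j)$ should read $(u_j\otimes 1_{\Mn})$ in the paper's convention $\Mn(A)\cong A\otimes\Mn$, but the computation is unaffected.
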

As $\Mn$ admits a unique trace, we get an isomorphism between the simplex of traces of the free group $F_{n-1}$ and the simplex of traces of $\Mn *_D \Mn$.
Combining this with Corollary \ref{Cor Free Group}, we deduce the following:

\begin{Cor}\label{cor:amalgamated poulsen}
The trace space of $\Mn *_D \Mn$  is a Poulsen simplex, for $n\geq 3$.
\end{Cor}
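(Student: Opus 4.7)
The plan is to combine Proposition \ref{prop:amalgamated} with Corollary \ref{Cor Free Group} via the standard fact that traces on $\Mn(B)$ correspond bijectively to traces on $B$. No new perturbation argument is required; all the heavy lifting has been done earlier.

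First, I would invoke Proposition \ref{prop:amalgamated} to identify $\Mn *_D \Mn$ with $\Mn(C^*(F_{n-1}))$. Since the hypothesis is $n\geq 3$, we have $n-1\geq 2$, so Corollary \ref{Cor Free Group} applies to $F_{n-1}$: the trace space $\Traces{C^*(F_{n-1})}$ is a Poulsen simplex.

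Next, I would establish the affine homeomorphism between $\Traces{\Mn(B)}$ and $\Traces{B}$ for any unital separable \csalg $B$. Concretely, given a trace $\tau$ on $B$, the map $\tr_n\otimes \tau$ is a trace on $\Mn(B)\cong \Mn\otimes B$; conversely, given a trace $\psi$ on $\Mn(B)$, restricting to the corner $E_{11}\otimes B\cong B$ and normalizing by $n$ recovers a trace on $B$. These maps are mutually inverse because any two minimal projections in $\Mn$ are unitarily conjugate, which forces the trace value of $E_{ii}\otimes b$ to be independent of $i$ and to determine $\psi$ completely. Both maps are clearly affine and continuous in the weak-$*$ topology, so they constitute an affine homeomorphism of compact convex sets (in particular of Choquet simplices), and hence preserve the property of being Poulsen.

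Combining these, the trace simplex of $\Mn *_D \Mn$ is affinely homeomorphic to that of $C^*(F_{n-1})$, which is Poulsen. There is no real obstacle here; the entire content lies in the two inputs (Proposition \ref{prop:amalgamated} and Corollary \ref{Cor Free Group}) which have already been proved. The one point worth stating carefully, to make the argument self-contained, is the affine homeomorphism $\Traces{\Mn(B)}\cong \Traces{B}$, which is standard but deserves a sentence.
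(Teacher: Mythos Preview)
Your proposal is correct and matches the paper's approach exactly: the paper also deduces the corollary by combining Proposition \ref{prop:amalgamated} with Corollary \ref{Cor Free Group}, noting that the uniqueness of the trace on $\Mn$ gives an affine isomorphism between $\Traces{\Mn *_D \Mn}$ and $\Traces{C^*(F_{n-1})}$. Your explicit description of the affine homeomorphism $\Traces{\Mn(B)}\cong\Traces{B}$ simply spells out what the paper compresses into the phrase ``As $\Mn$ admits a unique trace.''
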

The face of $\Traces{\MnMn}$ consisting of traces which factorize through $\Mn *_D \Mn$ is thus a Poulsen simplex. In the proof of Theorem \ref{Second Theorem} (as in the proof of Theorem \ref{Thm:free products of commutative algebras}), we will define a representation into a suitably constructed factor, and most of the work will consist of making ``tracially small" modifications in order to generate the factor. Our strategy will be to arrange for our representation to have a ``tracially small" sub-representation which factors through $\Mn *_D \Mn$, which we will use in the spirit of Proposition \ref{prop:amalgamated}. 

\subsection{Proof of Theorem \ref{Second Theorem}} \label{Mn main proof section}

We now proceed with the proof of the Theorem. We will sometimes abuse notation and ignore the ordering of tensor components (i.e. identify $x\otimes y$ with  $y\otimes x$). This will not cause any ambiguity, as we shall do so only when the tensor products  are composed of distinct algebras. We will also introduce a superscript to matrix units, e.g $E_{ij}^{(n)}$, to indicate the dimensions of the matrix.

 Denote $A=\MnMn$.
By Lemma \ref{lem:approximating-sums-of-two-extreme-points} it is enough to approximate traces of the form $$\varphi=\frac{1}{2}(\varphi_1+\varphi_2)$$ where $\varphi_1$ and $\varphi_2$ are extreme traces.

Choose tracial representations $\pi_i$ of $A$ corresponding to the traces $\varphi_i$ for $i=1,2$. According to the discussion preceding Lemma \ref{lem:reps of Mn, genearotrs and relations}, we may assume that the von Neumann algebras of the representations $\pi_i$ are of the form $\Mn\otimes N_i$ for some finite factors $N_i$, such that $\pi_i(e_{jk}) = E_{jk}^{(n)}\otimes 1_{N_i}$. Let $M=\Mn\otimes N_1\otimes N_2\otimes \Matrix_2$ and denote the tensor product trace by $\tau$. Then $\varphi=\tau\circ \pi$, where $\pi$ is the unital  $*$-representation $\pi:A\to M$ defined by:
\begin{equation}
    \pi(x)=\pi_1(x)\otimes 1_{N_2}\otimes E_{11}^{(2)}
          +\pi_2(x)\otimes 1_{N_1}\otimes E_{22}^{(2)}      
\end{equation}
Fix $\eps>0$. In \S\ref{section:findim}, we will introduce a dimension-amplification method for finite-dimensional traces. We show, in particular, that (extreme) finite-dimensional traces can be approximated by finite-dimensional extreme traces with arbitrarily high dimension, see Corollary \ref{cor: finite-dimensional reps of Mn are poulsen}.  It is therefore harmless to assume that the dimension of $N_1$ and $N_2$ is at least $1/{\eps^2}$ (or infinite-dimensional). Moreover, if both $N_1$ and $N_2$ are finite-dimensional, one can assume that they are of the same dimension, see Remark \ref{rmk:approximating a single extremal finite-dimensional}.

Let $ S = \{ e_{12},...,e_{1n}, f_{12},...,f_{1n} \}$  denote the set of generators of $A$. We shall now construct a finite factor $(\tilde{M},\tilde{\tau})$, and a unital *-representation $\tilde{\pi}$ of $A$ into $\tilde{M}$ such that the following holds:
\begin{enumerate}
    \item \label{three-conditions: M} There exists a finite factor $(\hat{M},\hat{\tau})$ independent of $\eps$ (and of $\tilde{M}$) which contains $\tilde{M}$ and $M$ as corners.  
    \item \label{three-conditions: approx} $\|\tilde{\pi}(x)-\pi(x)\|_{\hat{\tau}}\leq 4\eps$ for $x\in S$.
    \item \label{three-conditions: generating} $\tilde{\pi}(A)''=\tilde{M}$.
\end{enumerate}
Once this is done, the proof of the theorem is complete. Indeed,   (\ref{three-conditions: generating}) ensures that the trace $\tilde{\varphi} = \tilde{\tau} \circ \tilde{\pi}$ is extreme. By (\ref{three-conditions: M}) the representations $\tilde{\pi}$ and $\pi$ may be viewed as (not necessarily unital) $*$-representations into $\hat{M}$. Together with (\ref{three-conditions: approx}),
it follows from Lemma \ref{lem:approximate in norm on generators} that $\tilde{\varphi}$ approximates the trace $\varphi$ arbitrarily well,  as $\eps\to 0$.\\

We now turn to construct $\tilde{M}$ and $\tilde{\pi}$ that satisfy the above requirements.
First,  due to the assumption on the dimension of $N_i$, ($i=1,2)$, we can choose projections $r_i \in N_i$ with trace $\tau_{N_1}(r_1) = \tau_{N_2}(r_2) < \eps$. 
Apply Lemma \ref{lem:embedding N as a corner} to obtain finite factors $\tilde{N}_i$ containing $N_i$ as a corner, i.e., $N_i = 1_{N_i} \tilde{N}_i 1_{N_i}$.
Moreover, for the projections $p_i = 1_{\tilde{N}_i} - 1_{N_i}$, we have that $\tau_{\tilde{N}_i}(p_i) = \tau_{\tilde{N}_i}(r_i)$. Thus, there exist partial isometries $v_i\in \tilde{N}_i$ such that $v_iv_i^* = r_i$ and $v_i^*v_i = p_i$.
Denote $\tilde{N}=\tilde{N}_1\otimes \tilde{N}_2$. Define the factor $\tilde{M} = \Mn\otimes \tilde{N} \otimes \Matrix_2$ and denote the corresponding trace by $\tilde{\tau}=\tau_{\tilde{M}}$. 
 The finite factor $(\tilde{M}, \tilde{\tau})$ is the one we will show to satisfy the three requirements above.
 Note that $\tilde{M}$ contains $M$ as a corner and, as stated in Lemma \ref{lem:embedding N as a corner}, $\tilde{M}$  is contained in some finite factor $\hat{M}$ independent of $\eps$. Condition (1) is thus verified.\\

It will be convenient to introduce the non-unital embeddings $\sigma_i\fromto{\tilde{N}_i}{\tilde N \otimes \Matrix_2}$:
\begin{align}
\sigma_1 :\tilde{N}_1\to \tilde{N}\otimes \Matrix_2,\qquad x_1\mapsto x_1\otimes 1_{\tilde{N}_2}\otimes E_{11}^{(2)}\\
\sigma_2 :\tilde{N}_2\to \tilde{N}\otimes \Matrix_2,\qquad x_2\mapsto  1_{\tilde{N}_1}\otimes x_2 \otimes E_{22}^{(2)}
\end{align}
We will also make use of the following projection $p\in \tilde{N}$:
\begin{equation}\label{eq:def-of-p}
\begin{split}
    p = 1_{\tilde{N}}- 1_N &= p_1\otimes 1_{\tilde{N}_2}+1_{\tilde{N}_1}\otimes p_2 - p_1\otimes p_2\\
    &= p_1\otimes 1_{N_2}+1_{N_1}\otimes p_2 +p_1\otimes p_2.
\end{split}
\end{equation}

We are now ready to define the required representation.
Let $\lambda_1, \lambda_2$ denote complex numbers of modulus $1$, which we shall choose later. Using Lemma \ref{lem:reps of Mn, genearotrs and relations}, we define $\tilde{\pi}\fromto{A}{\tilde M}$ as follows:
\begin{align}
    \tilde{\pi}(f_{1j}) &= \left[E_{1j}^{(n)} \otimes p_1 + \pi_1(f_{1j})\right] \otimes 1_{\tilde{N}_2}\otimes E_{11}^{(2)} + \left[E_{1j}^{(n)}\otimes p_2 + \pi_2(f_{1j})\right]\otimes 1_{\tilde{N}_1} \otimes E_{22}^{(2)} \label{def:tilde pi 2} \\[2ex]
    \tilde{\pi}(e_{12}) &= E_{12}^{(n)}\otimes \left[ \sigma_1\left(  1_{\tilde{N}_1} - p_1 -r_1+v_1+v_1^* \right) + \sigma_2\left( 1_{\tilde{N}_2} - p_2 - r_2+v_2 + v_2^* \right)\right] \label{eq: e_12} \\[2ex]
    \tilde{\pi}(e_{13}) &= E_{13}^{(n)} \otimes\left[\left( 1_{\tilde{N}}-p \right) \otimes  1_{\Matrix_2} +  p \otimes  \left( E_{12}^{(2)}+  E_{21}^{(2)}\right)\right]\label{eq: e_13}\\[2ex]
    \tilde{\pi}(e_{14}) &= E_{14}^{(n)}\otimes\left[ \sigma_1(1_{N_1} +\lambda_1 p_1) +\sigma_2(1_{N_2} +\lambda_2 p_2)\right]\label{eq: e_14} \\[2ex]
    \tilde{\pi}(e_{1j}) &= E_{1j}^{(n)}\otimes 1_{\tilde{N}}\otimes 1_{M_2} \label{def:tilde pi 1}, \qquad j=5,\ldots,n.
\end{align}
It is easy to see that the operators $\tilde{\pi}(f_{1j})$ defined above for all $j=2,...,n$, are partial isometries possessing common range projection and mutually orthogonal source projections which sum to $1_{\tilde{M}}$, and thus define a unital representation of $\Mn$. 
Similarly, we see that $\tilde{\pi}(e_{1j})$ are of the form $E_{1j}^{(n)}\otimes v_j$ where $v_j\in \tilde{N}$ are unitary, as specified in Lemma \ref{lem:reps of Mn, genearotrs and relations}. Thus the equations above indeed determine a representation $\tilde{\pi}\fromto{A}{\tilde M}$.

Note that for each $x\in S$, the operator $\tilde{\pi}(x)-\pi(x)$ is supported on a projection which has trace at most $2\eps$. Since in addition $\|\tilde{\pi}(x)-\pi(x)\| \le 2$, we have  $$\|\tilde{\pi}(x)-\pi(x)\|_{\hat{\tau}}\leq \|\tilde{\pi}(x)-\pi(x)\|_{\tilde{\tau}}\leq 4\eps$$ 
and condition (2) is thus fulfilled.\\

Let us denote $M_0 = \tilde{\pi}(A)''$. The rest of the proof is devoted to proving that $M_0 = \tilde M$, and thus verifying condition (3). We shall do this in a series of claims. 

\begin{Claim}\label{claim:tilde-p is in M0}

The elements $E_{ij}^{(n)}\otimes \sigma_1(p_1)$ and $E_{ij}^{(n)} \otimes \sigma_2(p_2)$ are in $M_0$ for all $1\leq i,j\leq n$. 
\end{Claim}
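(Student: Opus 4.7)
Denote by $w_4$ the inner factor of $\tilde\pi(e_{14})=E_{14}^{(n)}\otimes w_4$. Because the ranges of $\sigma_1$ and $\sigma_2$ lie in the orthogonal $E_{11}^{(2)}$- and $E_{22}^{(2)}$-corners of $\tilde N\otimes \Matrix_2$, and because $1_{N_i}\perp p_i$ inside $\tilde N_i$, the spectral decomposition of $w_4$ reads
\[
w_4 = 1\cdot\bigl(\sigma_1(1_{N_1})+\sigma_2(1_{N_2})\bigr) + \lambda_1\sigma_1(p_1) + \lambda_2\sigma_2(p_2).
\]
I will choose $\lambda_1,\lambda_2$ to be distinct complex numbers of modulus one, both different from $1$, so that the spectrum of $w_4$ is the three-element set $\{1,\lambda_1,\lambda_2\}$ and the three displayed operators are its pairwise-orthogonal spectral projections. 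Lagrange interpolation then provides polynomials $P_1,P_2$ of degree at most two with $P_i(w_4)=\sigma_i(p_i)$.

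The plan is first to obtain $E_{11}^{(n)}\otimes w_4\in M_0$; once this is in hand, functional calculus within the corner $(E_{11}^{(n)}\otimes 1)\tilde M(E_{11}^{(n)}\otimes 1)\cong \tilde N\otimes \Matrix_2$ yields $E_{11}^{(n)}\otimes\sigma_i(p_i)=E_{11}^{(n)}\otimes P_i(w_4)\in M_0$. The obstacle is that products of the $\tilde\pi(e_{jk})$'s alone only recover the subalgebra $\tilde\pi(\Mn)=\mathrm{span}\{E_{jk}^{(n)}\otimes w_j^*w_k\}$, in which any occurrence of $w_4$ must be matched with a compensating $w_4^*$. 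One must therefore combine the $e$- and $f$-generators: the block form of $\tilde\pi(f_{1j})$ contains the ``enhancement'' $E_{1j}^{(n)}\otimes p_i$ sitting on a subspace orthogonal -- thanks to $p_i\perp 1_{N_i}$ in $\tilde N_i$ -- to the ``original'' $\pi_i(f_{1j})$, and a direct computation of $\tilde\pi(e_{14})\tilde\pi(f_{41})$ (using this orthogonality to kill cross-terms) produces an element of the shape
\[
\lambda_1 E_{11}^{(n)}\otimes\sigma_1(p_1) + \lambda_2 E_{11}^{(n)}\otimes\sigma_2(p_2) + Y,
\]
with $Y$ a controlled ``ambient'' contribution. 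A second independent expression with different spectral weights -- for instance $\tilde\pi(e_{14})^*\tilde\pi(f_{14})$, which gives coefficients $\bar\lambda_1,\bar\lambda_2$ -- followed by linear elimination then isolates $E_{11}^{(n)}\otimes\sigma_1(p_1)$ and $E_{11}^{(n)}\otimes\sigma_2(p_2)$, after checking that the $Y$-terms themselves lie in $M_0$.

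Finally, propagating from $E_{11}^{(n)}\otimes\sigma_i(p_i)$ to arbitrary $E_{jk}^{(n)}\otimes\sigma_i(p_i)$ is accomplished by conjugating with matrix units $E_{jk}^{(n)}\otimes 1\in M_0$, themselves obtained as by-products of the previous step. The main obstacle is thus the clean separation of the three $\sigma$-eigenspaces from the ambient ``$M$-noise'' in the crucial product $\tilde\pi(e_{14})\tilde\pi(f_{41})$; this is exactly where the spectral distinctness of $1,\lambda_1,\lambda_2$ together with the sharp orthogonality $p_i\perp 1_{N_i}$ inside $\tilde N_i$ are indispensable.
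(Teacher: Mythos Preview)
Your proposal has a genuine gap at its pivotal step. You correctly compute that $\tilde\pi(e_{14})\tilde\pi(f_{41})$ has the form $E_{11}^{(n)}\otimes[\lambda_1\sigma_1(p_1)+\lambda_2\sigma_2(p_2)]+Y$, but your plan to ``linearly eliminate'' after producing a second such expression, and then to ``check that the $Y$-terms themselves lie in $M_0$'', does not go through. The ambient term $Y$ is (up to a corner compression) $\pi_1(e_{14}f_{41})\otimes 1_{\tilde N_2}\otimes E_{11}^{(2)}+\pi_2(e_{14}f_{41})\otimes 1_{\tilde N_1}\otimes E_{22}^{(2)}$, and there is no reason at this stage for such elements to belong to $M_0$; establishing this is essentially the content of the later Claim on isolating the $N_i$'s, which itself relies on the present claim. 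Moreover, the $Y$-term coming from your second expression $\tilde\pi(e_{14})^*\tilde\pi(f_{14})$ involves a \emph{different} algebra element ($\pi_i(e_{41}f_{14})$, in a different matrix corner), so subtraction cannot cancel the ambient contributions. Your propagation step is also circular: you invoke $E_{jk}^{(n)}\otimes 1\in M_0$, but that is the next claim in the paper and its proof uses the present one.

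The paper avoids both problems with a spectral argument rather than linear elimination. One considers the full element $\tilde\pi(a)$ for $a=e_{14}f_{41}e_{11}$, which equals $E_{11}^{(n)}\otimes[\lambda_1\sigma_1(p_1)+\lambda_2\sigma_2(p_2)]$ plus the ambient term $Y$. Since $a$ is a contraction, each $\pi_i(a)$ has at most countably many eigenvalues of modulus one (by Jamison's theorem); one then \emph{chooses} $\lambda_1,\lambda_2$ to be distinct unimodular numbers outside this countable set. With this choice, $E_{11}^{(n)}\otimes\sigma_i(p_i)$ is precisely the spectral projection of $\tilde\pi(a)\in M_0$ onto its $\lambda_i$-eigenspace, hence lies in $M_0$ without ever needing $Y\in M_0$. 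Propagation to arbitrary $i,j$ is done with $\tilde\pi(f_{i1})$ and $\tilde\pi(f_{1j})$, which act as the standard matrix units on the $\sigma_k(p_k)$ pieces, so the standard $E_{ij}^{(n)}\otimes 1$ are not needed yet. Note that your proposed choice of $\lambda_i$ (merely distinct from $1$ and from each other) is insufficient for this: avoiding the unimodular point spectrum of $\pi_1(a)$ and $\pi_2(a)$ is exactly what makes the spectral projection isolate $\sigma_i(p_i)$.
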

\begin{proof}
First, we may rewrite (\ref{def:tilde pi 2}) as 
\begin{equation}
    \tilde \pi (f_{1j}) = E_{1j}^{(n)}\otimes \left [\sigma_1(p_1) + \sigma_2(p_2)\right] + b_j, 
\end{equation}
where $b_j = \pi_1(f_{1j})\otimes 1_{\tilde N_2}\otimes E_{11}^{(2)} + \pi_2(f_{1j})\otimes 1_{\tilde N_1}\otimes E_{22}^{(2)}$. 
Denote $a = e_{14}f_{41}e_{11} \in A$, and consider the element $\tilde\pi(a)\in M_0$:
\begin{align*}
    \tilde\pi(a) & = \tilde\pi(e_{14}) \tilde\pi(f_{14})^*\tilde\pi(e_{11}) \\
    & = \tilde\pi(e_{14})\left(E_{41}^{(n)}\otimes \left [\sigma_1(p_1) + \sigma_2(p_2)\right]\right)\tilde\pi(e_{11}) + \tilde\pi(e_{14})b_4^*\tilde\pi(e_{11})\notag\\
    &= E_{11}^{(n)}\otimes\left[\lambda_1 \sigma_1(p_1) + \lambda_2 \sigma_2(p_2)\right]  + \tilde\pi(e_{14})b_4^*\tilde\pi(e_{11})
\end{align*}
Note that $b_j$ is orthogonal to $1_{\Matrix_n}\otimes\sigma_i(p_i)$ for $i=1,2$. Thus we may develop the second summand above as:
\begin{align*}
    \tilde\pi(e_{14})b_4^*\tilde\pi(e_{11}) & = \left(E_{14}^{(n)}\otimes\left[ \sigma_1(1_{N_1} +\lambda_1 p_1) +\sigma_2(1_{N_2} +\lambda_2 p_2)\right]\right) b_4^* \left(E_{11}^{(n)}\otimes 1_{\tilde N}\otimes 1_{\Matrix_2}\right)\\
    & =\left(E_{14}^{(n)}\otimes 1_{\tilde N} \otimes 1_{\Matrix_2}\right) b_4^* \left(E_{11}^{(n)}\otimes 1_{\tilde N}\otimes 1_{\Matrix_2}\right)\\
    & = \pi_1(a)\otimes 1_{\tilde N_2}\otimes E_{11}^{(2)} + \pi_2(a)\otimes 1_{\tilde N_1}\otimes E_{22}^{(2)}. 
\end{align*}

This expression, together with the fact that $\pi_i(a) \perp p_i$, makes it clear that the eigenvalues of $\tilde{\pi}(a)$ are the union of $\{ \lambda_1, \lambda_2 \}$ and the eigenvalues of $\pi_1(a)$ and $\pi_2(a)$.
Since $a$ is a contraction, by \cite{jamison1965eigenvalues} there are at most countably many eigenvalues of modulus 1 for both $\pi_1(a)$ and $\pi_2(a)$.
We may now, therefore, fix  our choice of $\lambda_1$ and $\lambda_2$, requiring that they are distinct, and are not eigenvalues of $\pi_1(a)$ and $\pi_2(a)$.
By the choice of $\lambda_i$, we see that $E_{11}^{(n)} \otimes \sigma_i(p_i)$ is the projection onto the $\lambda_i$ eigenspace of $\tilde{\pi}(a)$. This projection is in $M_0$ because it can be expressed as the spectral projection onto $\{0\}$ of the normal element $(\tilde{\pi}(a)-\lambda_i)^*(\tilde{\pi}(a)-\lambda_i)$.
Finally, we note that for $k=1,2$:
\[  
    E_{ij}^{(n)}\otimes \sigma_k(p_k) =\tilde{\pi}(f_{i1}) \left ( E_{11}^{(n)} \otimes \sigma_1(p_k) \right) \tilde{\pi}(f_{1j}).
\]
The claim is thus proven.
\end{proof}

\begin{Claim}\label{claim:Mn is in M0}
$\Mn \otimes 1_{\tilde{N}} \otimes 1_{\Matrix_2} \subseteq M_0$.
\end{Claim}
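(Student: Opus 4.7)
I would begin with the easy observation that unitality of $\tilde{\pi}$ forces $E_{ii}^{(n)}\otimes 1_{\tilde N}\otimes 1_{\Matrix_2}=\tilde{\pi}(e_{ii})\in M_0$ for all $i$, so only the off-diagonal matrix units require work. By Lemma \ref{lem:reps of Mn, genearotrs and relations} applied to the first matrix factor of $A=\MnMn$, we may write $\tilde{\pi}(e_{1j})=E_{1j}^{(n)}\otimes s_j$ for unitaries $s_j\in\tilde{N}\otimes\Matrix_2$, with $s_j=1$ whenever $j\geq 5$ by construction. Hence $E_{1j}^{(n)}\otimes 1\in M_0$ is automatic for $j\geq 5$, and the substantive task is to untwist $s_2,s_3,s_4$ --- this is the entirety of the problem when $n=4$.

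The crucial first bootstrap exploits the spectral structure of $s_4=\sigma_1(1_{N_1}+\lambda_1 p_1)+\sigma_2(1_{N_2}+\lambda_2 p_2)$. Since $p_k\perp 1_{N_k}$, a direct check shows that $s_4$ acts as the identity on the projection $1-\sigma_1(p_1)-\sigma_2(p_2)=\sigma_1(1_{N_1})+\sigma_2(1_{N_2})$. This projection lies in $M_0$ by Claim \ref{claim:tilde-p is in M0}, so
\begin{align*}
\tilde{\pi}(e_{14})\cdot\bigl(E_{44}^{(n)}\otimes(1-\sigma_1(p_1)-\sigma_2(p_2))\bigr) = E_{14}^{(n)}\otimes(1-\sigma_1(p_1)-\sigma_2(p_2))\in M_0.
\end{align*}
Adding the pieces $E_{14}^{(n)}\otimes\sigma_k(p_k)$ for $k=1,2$ (again from Claim \ref{claim:tilde-p is in M0}) yields $E_{14}^{(n)}\otimes 1\in M_0$, and adjunction gives $E_{41}^{(n)}\otimes 1\in M_0$.

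I would then iterate this ``identity-region'' idea for $j=3$ and $j=2$. The unitary $s_3=(1-p)\otimes 1_{\Matrix_2}+p\otimes(E_{12}^{(2)}+E_{21}^{(2)})$ acts as the identity on $(1-p)\otimes 1_{\Matrix_2}$, and one can build this projection in each diagonal corner of $M_0$ by combining Claim \ref{claim:tilde-p is in M0} with the freshly-obtained $E_{14}^{(n)}\otimes 1$. Cutting $\tilde{\pi}(e_{13})$ by this projection isolates the trivial part $E_{13}^{(n)}\otimes(1-p)\otimes 1_{\Matrix_2}\in M_0$, while cutting it by the complementary projection $E_{33}^{(n)}\otimes p\otimes 1_{\Matrix_2}$ produces $E_{13}^{(n)}\otimes p\otimes(E_{12}^{(2)}+E_{21}^{(2)})\in M_0$; further products of these elements with their adjoints fill in the missing ``swap-to-identity'' piece and yield $E_{13}^{(n)}\otimes 1$. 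A parallel argument, based on the identity region $\sigma_1(1_{N_1}-r_1)+\sigma_2(1_{N_2}-r_2)$ of $s_2$, handles $j=2$. Matrix-multiplication with the diagonal units then delivers the entire $\Mn\otimes 1_{\tilde N}\otimes 1_{\Matrix_2}$.

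The main obstacle I foresee is the case $j=2$, because the defect region of $s_2$ involves the projections $\sigma_k(r_k)$, which are not directly supplied by Claim \ref{claim:tilde-p is in M0}. The plan is to exploit the Murray--von Neumann equivalences $v_k^*v_k=p_k$ and $v_kv_k^*=r_k$ hardwired into the construction of $\tilde{N}_k$: the partial isometries $v_k$ enter the formula for $s_2$, so suitable products involving $\tilde{\pi}(e_{12})$ and the corner projections already obtained should transport $\sigma_k(p_k)$ into $\sigma_k(r_k)\in M_0$, closing out the argument.
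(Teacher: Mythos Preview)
Your handling of $j=4$ is correct and is the paper's argument, and your plan for $j=2$ --- cutting $\tilde\pi(e_{12})$ against the projections $E_{jj}^{(n)}\otimes\sigma_k(p_k)$ to extract $\sigma_k(v_k)$, $\sigma_k(v_k^*)$ and then $\sigma_k(r_k)$ --- is exactly what the paper does.

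The gap is at $j=3$. You claim that $E_{33}^{(n)}\otimes(1-p)\otimes 1_{\Matrix_2}$ (equivalently $E_{33}^{(n)}\otimes p\otimes 1_{\Matrix_2}$) can be built from Claim~\ref{claim:tilde-p is in M0} together with $E_{14}^{(n)}\otimes 1$. It cannot. Write $p\otimes 1_{\Matrix_2}$ block by block in $\Matrix_2$: the $E_{11}^{(2)}$-block contains the summand $1_{\tilde N_1}\otimes p_2$, and the $E_{22}^{(2)}$-block contains $p_1\otimes 1_{\tilde N_2}$. Now observe that every ingredient you are invoking --- the $E_{ij}^{(n)}\otimes\sigma_k(p_k)$, the diagonal units $\tilde\pi(e_{ii})$, $E_{14}^{(n)}\otimes 1$, and indeed all of $\tilde\pi(e_{12}),\tilde\pi(e_{14}),\tilde\pi(f_{1j})$ --- lies in the von~Neumann subalgebra
\[
\mathcal{S}=\bigl(\Mn\otimes\tilde N_1\otimes 1_{\tilde N_2}\otimes E_{11}^{(2)}\bigr)\ \oplus\ \bigl(\Mn\otimes 1_{\tilde N_1}\otimes\tilde N_2\otimes E_{22}^{(2)}\bigr)\ \subseteq\ \tilde M,
\]
whereas $E_{33}^{(n)}\otimes p\otimes 1_{\Matrix_2}\notin\mathcal S$ because its $E_{11}^{(2)}$-block is not of the form $x\otimes 1_{\tilde N_2}$. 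The only generator that leaves $\mathcal S$ is $\tilde\pi(e_{13})$ itself, so your plan to first build the cutting projection and only then cut $\tilde\pi(e_{13})$ is circular. Relatedly, even if one grants the two pieces $E_{13}^{(n)}\otimes(1-p)\otimes 1$ and $E_{13}^{(n)}\otimes p\otimes C$, the products of these with their adjoints and with $\tilde\pi(e_{13})$ never leave the set $\{0,\ E_{ij}^{(n)}\otimes(1-p)\otimes 1,\ E_{ij}^{(n)}\otimes p\otimes C,\ E_{ii}^{(n)}\otimes p\otimes 1\}$, so the asserted ``swap-to-identity'' step does not follow from bare products.

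The paper breaks this circularity by using the swap inside $\tilde\pi(e_{13})$ at the outset rather than trying to first isolate it: since $\sigma_2(p_2)\le p\otimes 1_{\Matrix_2}$ one has
\[
\tilde\pi(e_{13})\bigl(E_{33}^{(n)}\otimes\sigma_2(p_2)\bigr)\tilde\pi(e_{31})
= E_{11}^{(n)}\otimes 1_{\tilde N_1}\otimes p_2\otimes E_{11}^{(2)},
\]
which transports $p_2$ into the $E_{11}^{(2)}$-block. This, together with the symmetric computation and the pieces from Claim~\ref{claim:tilde-p is in M0}, produces $E_{11}^{(n)}\otimes p\otimes E_{ii}^{(2)}$ for $i=1,2$ separately; the remaining bookkeeping (which also uses $\tilde\pi(f_{13})$) then recovers $E_{13}^{(n)}\otimes 1$.
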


\begin{proof}
Since $\tilde{\pi}(e_{1j})=E_{1j}^{(n)}\otimes 1_{\tilde{N}}\otimes 1_{\Matrix_2}$ for all $j\ne 2,3,4$, we only need to show the claim for $j=2,3,4$.

We start with $j=4$. By the definition of  $\tilde{\pi}(e_{14})$ we see that
\begin{align*}
    E_{14}^{(n)} \otimes  1_{\tilde{N}} \otimes  1_{\Matrix_2} = \tilde{\pi}(e_{14})  + E_{14}^{(n)}\otimes \left[(1 - \lambda_1)\sigma_1(p_1) +(1-\lambda_2) \sigma_2(p_2) \right].
\end{align*}
The second summand is in $M_0$ by the previous claim, therefore $E_{14}^{(n)}\otimes 1_{\tilde{N}}\otimes 1_{\Matrix_2} \in M_0$. 

In order to recover the standard $E_{12}^{(n)}$, we recall that:
\begin{align*}
    \tilde{\pi}(e_{12}) = & E_{12}^{(n)}\otimes 1_{\tilde{N}}\otimes 1_{\Matrix_2}\\ 
    + & E_{12}^{(n)}\otimes \sigma_1\left( v_1 + v_1^* - p_1 - r_1  \right) \\
     + & E_{12}^{(n)}\otimes \sigma_2 \left(v_2 + v_2^*- p_2 - r_2 \right).
\end{align*}
We would  therefore like to isolate elements of the form $E_{12}^{(n)} \otimes \sigma_1(v_1)$, for example,
so that we will be able to subtract them from $\tilde{\pi}(e_{12})$.
To that end, we note that $ v_ip_i = v_i$, and $p_i v_i^* = v_i^*$, thus: 
\begin{equation}\label{eq:v_1}
    E_{12}^{(n)} \otimes\sigma_1(v_1) = 
  \tilde{\pi}(e_{12}) \left ( E_{22}^{(n)}\otimes \sigma_1(p_1) \right )
\end{equation}
and 
\begin{equation}\label{eq:v_1 star}
    E_{12}^{(n)} \otimes \sigma_1(v_1^*) = 
    \left ( E_{11}^{(n)}\otimes \sigma_1(p_1) \right ) \tilde{\pi}(e_{12}).
\end{equation}
Since $E_{ij}^{n} \otimes \sigma_k(p_k)$ are in $M_0$ for every $i,j,k$ due to the previous claim, then so are the elements on the left-hand side.
Similarly, since $v_i p_i v_i^* = v_i v_i^* = r_i$ we deduce that:
\begin{equation}
E_{12}^{(n)} \otimes \sigma_k(r_k) = \tilde{\pi}(e_{12}) \left( E_{21}^{(n)} \otimes \sigma_k(p_k) \right) \tilde{\pi}(e_{12}),
\end{equation}
so $E_{12}^{(n)} \otimes \sigma_k(r_k)$ are in $M_0$ as well for $k=1,2$.
 Adding and subtracting the elements we have obtained so far from $\tilde{\pi}(e_{12})$, we deduce that $E_{12}^{(n)} \otimes 1_{\tilde{N}} \otimes  1_{\Matrix_2} \in M_0$.

We now turn to recover the standard element $E_{13}^{(n)}$. Towards that, denote  $C =E_{12}^{(2)}+  E_{21}^{(2)}$ and note that: 
\begin{align}\label{eq:recall-e13}
     E_{13}^{(n)} \otimes 1_{\tilde{N}}\otimes 1_{\Matrix_2}= \tilde{\pi}(e_{13})  + E_{13}^{(n)} \otimes p  \otimes 1_{\Matrix_2} -  E_{13}^{(n)} \otimes p  \otimes  C. 
\end{align}
We must therefore show that the last two summands are in $M_0$. 
To do so, we note that $\sigma_2(p_2)\leq p\otimes 1_{\Matrix_2}$, and as a result:
\begin{align}\label{eq: p_2 in E_1}
\tilde{\pi}(e_{13}) \left( E_{33}^{(n)} \otimes \sigma_2(p_2) \right) \tilde{\pi}(e_{31})  = E_{11}^{(n)} \otimes 1_{\tilde{N}_1} \otimes p_2 \otimes E_{11}^{(2)}.
\end{align}
This element is in $M_0$ by the previous claim, and thereby so is the element:
\begin{equation}\label{eq: D}
\left (E_{11}^{(n)} \otimes 1_{\tilde{N}_1} \otimes p_2 \otimes E_{11}^{(2)} \right ) \left ( E_{11}^{(n)} \otimes \sigma_1(p_1) \right ) = E_{11}^{(n)} \otimes p_1 \otimes p_2 \otimes E_{11}^{(2)}.
\end{equation}
Recalling the definition of $p$ in Eq. \ref{eq:def-of-p}, we can write: 
\begin{equation*}
    E_{11}^{(n)} \otimes p \otimes E_{11}^{(2)} = \left ( E_{11}^{(n)} \otimes 1_{\tilde{N}_1} \otimes p_2 \otimes E_{11}^{(2)} \right ) + \left ( E_{11}^{(n)} \otimes \sigma_1(p_1) \right ) - E_{11}^{(n)} \otimes p_1 \otimes p_2 \otimes E_{11}^{(2)}
\end{equation*}
and by the previous equations, \ref{eq: p_2 in E_1} and \ref{eq: D}, this element is in $M_0$ as well. In an analogous fashion, we  get that the elements $E_{11}^{(n)} \otimes p \otimes E_{22}^{(2)}$ and  $E_{11}^{(n)} \otimes p_1 \otimes p_2 \otimes E_{22}^{(2)}$ are in $M_0$; hence also $E_{11}^{(n)} \otimes p \otimes 1_{\Matrix_2}$  and $E_{11} \otimes p_1 \otimes p_2 \otimes 1_{\Matrix_2}$. We, therefore, get that the element
\[
\left( E_{11}^{(n)} \otimes p \otimes 1_{\Matrix_2} \right) \tilde{\pi}(e_{13}) = 
E_{13}^{(n)} \otimes p \otimes C
\]
 is in $M_0$. Looking at equation \ref{eq:recall-e13} again, we are only left to show that the element $E_{13}^{(n)} \otimes p \otimes 1_{\Matrix_2}$ is in $M_0$. 
 
 To that end, we note that $\sigma_1(p_1),\sigma_2(p_2)\leq p\otimes 1_{\Matrix_{2}}$ and therefore:
 \begin{equation}
\begin{split}\label{eq: p_2+p_1 in E_13}
    \tilde{\pi}(e_{13}) & \left[ E_{31}^{(n)} \otimes(\sigma_1(p_1)+\sigma_2(p_2))\right]\tilde{\pi}(e_{13})  \\ & = E_{13}^{(n)} \otimes \left(1_{\tilde{N}_1} \otimes p_2 \otimes E_{11}^{(2)} 
    + p_1 \otimes 1_{\tilde{N}_2} \otimes E_{22}^{(n)}\right).
\end{split}
\end{equation}
Second, using the definition of $\tilde{\pi}(f_{ij})$, we see that
\begin{equation}\label{eq: p1 times p2 in E_13}
    E_{13}^{(n)} \otimes p_1 \otimes p_2 \otimes 1_{\Matrix_2} = \tilde{\pi}(f_{13}) (E_{11} \otimes p_1 \otimes p_2 \otimes 1_{\Matrix_2})
\end{equation}
The elements in equations \ref{eq: p_2+p_1 in E_13} and \ref{eq: p1 times p2 in E_13}, together with  $E_{13}^{(n)} \otimes (\sigma_1(p_1) +\sigma_2(p_2))$ are sufficient to express $E_{13}^{(n)} \otimes p \otimes 1_{\Matrix_2}$, and we conclude that the latter is in $M_0$.
\end{proof}

\begin{Rmk} \label{rmk:p'v}
We note, for future use, that during the proof of Claim \ref{claim:Mn is in M0} we in fact showed that the elements $E_{12}^{(n)} \otimes \sigma_1(r_1)$, $E_{12}^{(n)} \otimes \sigma_1(v_1)$ and $E_{11}^{(n)} \otimes p \otimes E_{11}^{(2)}$ are in $M_0$, and by the conclusion of the same claim, we get the same elements where $E_{12}^{(n)}$ is replaced with any $E_{ij}^{(n)}$.
Symmetrically, the elements $E_{ij}^{(n)} \otimes \sigma_2(r_2)$, $E_{ij}^{(n)} \otimes \sigma_2(v_2)$ and $E_{ij}^{(n)} \otimes p \otimes E_{22}^{(2)}$ are also in $M_0$.
\end{Rmk}
Notice that due to Claim \ref{claim:Mn is in M0}, the goal of showing that $M_0 = \tilde{M}$ is reduced to showing that $E_{11}^{(n)} \otimes \tilde{N} \otimes  \Matrix_2 \subseteq M_0$. This allows us to focus from now on, mainly on the $(1,1)$-entry of $\Mn$.
We proceed by showing that  $\tilde{N}_1$ and $\tilde{N}_2$ can be generated separately on the diagonal blocks of the $2$ by $2$ matrices, that is:  
\begin{Claim}\label{claim:isolating N's}
$M_0$ contains $\Mn\otimes  \sigma_1(\tilde{N}_1)$ and $\Mn \otimes \sigma_2(\tilde{N}_2)$. 
\end{Claim}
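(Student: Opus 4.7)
The strategy is to reduce the claim to showing $\sigma_1(N_1)\subseteq M_0$ and then to produce an explicit coupled family of elements $\sigma_1(x_1)+\sigma_2(x_2)\in M_0$ which can be separated by using the projection $\sigma_1(r_1)\in M_0$. By symmetry it suffices to treat $\sigma_1$. Since $\Mn\otimes 1_{\tilde{N}}\otimes 1_{\Matrix_2}\subseteq M_0$ (Claim \ref{claim:Mn is in M0}), Lemma \ref{lem:generate tensors} reduces the claim to $1_{\Mn}\otimes \sigma_1(\tilde{N}_1)\subseteq M_0$; and since $\sigma_1(v_1)\in M_0$ (Remark \ref{rmk:p'v}) and $\tilde{N}_1$ is generated by $N_1$ together with $v_1$ by Lemma \ref{lem:generating-factors-from-corners} (applied with source/range projections $p_1$ and $r_1\leq 1_{N_1}$, so that $p_1+1_{N_1}=1_{\tilde{N}_1}$), it further suffices to establish $\sigma_1(N_1)\subseteq M_0$.

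Following \S\ref{reps of Mn and MnMn}, write $\pi_i(f_{1j})=E_{1j}^{(n)}\otimes u_{j,i}$ for unitaries $u_{j,i}\in N_i$ that generate $N_i$ as a von Neumann algebra. Then the definition of $\tilde{\pi}(f_{1j})$ in \eqref{def:tilde pi 2} reads $\tilde{\pi}(f_{1j})=E_{1j}^{(n)}\otimes\bigl[\sigma_1(p_1+u_{j,1})+\sigma_2(p_2+u_{j,2})\bigr]$. Computing products of the form $\tilde{\pi}(f_{1j})\tilde{\pi}(e_{jk})\tilde{\pi}(f_{k1})$ -- where for $n\geq 6$ one may choose distinct $j,k\in\{5,\ldots,n\}$ so that $\tilde{\pi}(e_{jk})=E_{jk}^{(n)}\otimes 1$ and the calculation is completely clean, and for $n\in\{4,5\}$ one folds in the dressings of $\tilde{\pi}(e_{12}),\tilde{\pi}(e_{13}),\tilde{\pi}(e_{14})$ and cancels the resulting extra terms against the elements already in $M_0$ by Claim \ref{claim:tilde-p is in M0} and Remark \ref{rmk:p'v} -- produces
\begin{equation*}
E_{11}^{(n)}\otimes\bigl[\sigma_1(u_{j,1}u_{k,1}^*)+\sigma_2(u_{j,2}u_{k,2}^*)\bigr]\in M_0.
\end{equation*}
Iterating with longer words in $A$ yields coupled elements for arbitrary $*$-polynomial words in the $u_{j,i}$; since such words are WOT-dense in $N_i$, taking closures gives coupled elements of the form $E_{11}^{(n)}\otimes\bigl[\sigma_1(x_1)+\sigma_2(x_2)\bigr]$ for every $x_i$ in a dense subset of $N_i$.

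Right-multiplying any such coupled element by $E_{11}^{(n)}\otimes \sigma_1(r_1)\in M_0$ annihilates its $\sigma_2$-part (since $E_{11}^{(2)}E_{22}^{(2)}=0$), giving $E_{11}^{(n)}\otimes\sigma_1(x_1 r_1)\in M_0$; WOT-closure upgrades this to $\sigma_1(N_1 r_1)\subseteq M_0$. To finish, one enlarges the right ideal $N_1 r_1$ to all of $N_1$ using the factor structure: since $\dim N_1\geq 1/\eps^2$ and $\tau(r_1)<\eps$, the identity $1_{N_1}$ may be written as a sum $q_1+\cdots+q_k$ of pairwise orthogonal projections in $N_1$, each of trace at most $\tau(r_1)$, hence each equivalent in the factor $N_1$ to a subprojection of $r_1$. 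For each $i$ pick a partial isometry $w_i\in N_1$ with $w_iw_i^*=q_i$ and $w_i^*w_i\leq r_1$; then $w_i=w_ir_1\in N_1 r_1$, so $\sigma_1(w_i)\in M_0$, and hence $\sigma_1(q_i)=\sigma_1(w_i)\sigma_1(w_i)^*\in M_0$. Summing over $i$ yields $\sigma_1(1_{N_1})\in M_0$.

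Finally, multiplying any coupled element by $\sigma_1(1_{N_1})$ isolates its $\sigma_1$-component, so $E_{11}^{(n)}\otimes\sigma_1(x_1)\in M_0$ on a dense set and, by WOT-closure, for all $x_1\in N_1$. Conjugating by the matrix units $E_{i1}^{(n)}\otimes 1\otimes 1\in M_0$ then spreads this over all of $\Mn$, giving $\Mn\otimes \sigma_1(N_1)\subseteq M_0$, which together with the reductions above completes the proof. The main obstacle is the coupled-element extraction in the second paragraph for small $n$, where the nontrivial dressings of $\tilde{\pi}(e_{12}),\tilde{\pi}(e_{13}),\tilde{\pi}(e_{14})$ force careful bookkeeping to ensure that all junk terms can be cancelled against elements in $M_0$.
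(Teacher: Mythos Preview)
There is a genuine gap at the outset of your second paragraph. You write $\pi_i(f_{1j})=E_{1j}^{(n)}\otimes u_{j,i}$ for unitaries $u_{j,i}\in N_i$, citing \S\ref{reps of Mn and MnMn}. But that section concerns exactly those representations of $\MnMn$ which satisfy $\pi(e_{ii})=\pi(f_{ii})$, i.e.\ which factor through $\Mn *_D \Mn$. The given representations $\pi_1,\pi_2$ are arbitrary factor representations of $\MnMn$; the setup of \S\ref{Mn main proof section} normalizes only the first copy of $\Mn$, so that $\pi_i(e_{jk})=E_{jk}^{(n)}\otimes 1_{N_i}$, and places no restriction on the second copy. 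In general $\pi_i(f_{1j})$ is a partial isometry in $\Mn\otimes N_i$ from $\pi_i(f_{jj})$ to $\pi_i(f_{11})$, and these projections need not equal the diagonal ones. Consequently your computation of $\tilde\pi(f_{1j})\tilde\pi(e_{jk})\tilde\pi(f_{k1})$ is incorrect: done carefully it yields (after stripping the $p_1,p_2$ terms) the element $\pi_1(f_{1j}e_{jk}f_{k1})\otimes 1_{\tilde N_2}\otimes E_{11}^{(2)}+\pi_2(f_{1j}e_{jk}f_{k1})\otimes 1_{\tilde N_1}\otimes E_{22}^{(2)}$, which lives in all of $\Mn\otimes(\sigma_1(N_1)\oplus\sigma_2(N_2))$ and is not of the form $E_{11}^{(n)}\otimes[\sigma_1(x_1)+\sigma_2(x_2)]$. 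The subsequent ``iterating with longer words'' step then does not produce what you claim, and in particular your assertion that the $u_{j,i}$ generate $N_i$ has no meaning.

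The paper's argument is quite different and avoids this issue entirely. It observes that the elements $\rho(e_{ij})$ and $\rho(f_{ij})$, where $\rho(x)=\pi_1(x)\otimes 1_{\tilde N_2}\otimes E_{11}^{(2)}+\pi_2(x)\otimes 1_{\tilde N_1}\otimes E_{22}^{(2)}$, differ from elements already in $M_0$ by $E_{ij}^{(n)}\otimes\sigma_k(p_k)$ terms, so $\rho(A)\subseteq M_0$. Then, since $\varphi_1\neq\varphi_2$, the factor representations $\pi_1$ and $\pi_2$ are disjoint, and a standard fact about direct sums of disjoint factor representations gives $\rho(A)''=\Mn\otimes\sigma_1(N_1)\oplus\Mn\otimes\sigma_2(N_2)$ directly. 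This disjointness step replaces your entire $r_1$-projection and partial-isometry machinery for separating the two summands.
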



\begin{proof}

First, note that it suffices to generate $\Mn\otimes \sigma_1(N_1)$ (and symmetrically $\Mn\otimes \sigma_2(N_2)$). Indeed, by Lemma \ref{lem:generating-factors-from-corners},  $N_1$ and $v_1$ generate $\tilde{N}_1$, or equivalently  $E_{11}^{(n)}\otimes \sigma_1(N_1)$ and $E_{11}^{(n)}\otimes \sigma_1(v_1)$ generate $E_{11}^{(n)}\otimes \sigma_1(\tilde{N}_1)$;  
by Remark \ref{rmk:p'v},
the element $E_{11}^{(n)} \otimes\sigma_1(v_1)$ is in $M_0$, and so together with Claim \ref{claim:Mn is in M0}, we get all of $\Mn\otimes \sigma_1(\tilde{N}_1)$.

In order to generate $\Mn\otimes \sigma_1(N_1)$, define the following non-unital *-representation
$$\rho:A \to  \tilde{M}:\quad  x\mapsto  \pi_1(x) \otimes 1_{\tilde{N}_2} \otimes E_{11}^{(2)} + \pi_2(x) \otimes 1_{\tilde{N}_1} \otimes E_{22}^{(2)} $$
Alternatively, $\rho$ may be viewed as a unital *-representation of $A$ into the von Neumann (non-unital) subalgebra $\Mn \otimes \left( \sigma_1(N_1) \oplus \sigma_2(N_2) \right) \subseteq \tilde{M}$; we will show that this subalgebra is generated by the image of $\rho$. 
Note that $\rho$ is a direct sum representation, naturally equivalent to the direct sum $\pi_1 \oplus \pi_2$. Moreover, the factor representations $\pi_1$ and  $\pi_2$  may be assumed to be  disjoint, for otherwise the corresponding extreme traces $\varphi_1$ and $\varphi_2$ would be identical, and the original trace $\varphi$ would have been extreme to begin with. Thus, by \cite[Proposition 5.2.4]{dixmier1977c} or by \cite[Proposition 6.B.4]{bekka2020unitary}, we have:
\begin{align*}
    \rho(A)''
    &=\left( \pi_1(A)'' \otimes 1_{\tilde{N}_2} \otimes E_{11}^{(2)} \right) \oplus \left( \pi_2(A)'' \otimes 1_{\tilde{N}_1} \otimes E_{22}^{(2)} \right) \\
    &=\left( \Mn\otimes N_1\otimes 1_{\tilde{N}_2}\otimes E_{11}^{(2)}\right) \oplus  \left( \Mn\otimes N_2\otimes 1_{\tilde{N}_1}\otimes E_{22}^{(2)} \right)\\
    &= \left(\Mn\otimes \sigma_1(N_1) \right) \oplus \left( \Mn\otimes \sigma_2(N_2) \right)
\end{align*}
It is therefore left to explain why $\rho(A)$ is contained in $M_0$. Write:
\begin{align*}
    \rho(e_{ij})+E_{ij}\otimes\sigma_1(p_1)+E_{ij}\otimes \sigma_2(p_2) &= E_{ij}\otimes 1_{\tilde{N}}\otimes 1_{\Matrix_2} \\
    \rho(f_{ij})+E_{ij}\otimes \sigma_1(p_1)+E_{ij}\otimes \sigma_2(p_2) &= \tilde{\pi}(f_{ij})
\end{align*}
We have thus expressed $\rho(e_{ij})$ and $\rho(f_{ij})$ as combinations of elements which have already been shown to belong to $M_0$ in Claims \ref{claim:tilde-p is in M0} and \ref{claim:Mn is in M0}.  It follows that $\rho(A)$ is contained in $M_0$.    
\end{proof}

Now, we start mixing the two algebras $\tilde{N_1
}$ and $\tilde{N}_2$ together.

\begin{Claim}\label{claim:generating N1 tensor N2}
$M_0$ contains the algebras $\Mn \otimes \tilde{N} \otimes E_{11}^{(2)}$ and $\Mn \otimes \tilde{N}_1 \otimes E_{22}^{(2)}$.

\end{Claim}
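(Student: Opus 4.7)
The plan is to use the off-diagonal element $\xi:=E_{13}^{(n)}\otimes p\otimes C\in M_0$, established in the proof of Claim \ref{claim:Mn is in M0}, as a source of partial isometries that transfer the subalgebras from Claim \ref{claim:isolating N's} between the two diagonal $\Matrix_2$-blocks. The core move is to multiply $\xi$ by the corner projections $E_{33}^{(n)}\otimes \sigma_i(p_i)\in M_0$ (from Claim \ref{claim:isolating N's}), which simultaneously localize $C\in\Matrix_2$ to a single off-diagonal matrix unit and cut $p\in\tilde{N}$ down to $p_i$ tensored with the identity on the complementary factor.

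For the inclusion $\Mn\otimes\tilde{N}\otimes E_{11}^{(2)}\subseteq M_0$: the identities $p(p_1\otimes 1_{\tilde{N}_2})=p_1\otimes 1_{\tilde{N}_2}$ and $CE_{11}^{(2)}=E_{21}^{(2)}$ give $\xi\cdot(E_{33}^{(n)}\otimes\sigma_1(p_1))=w$ where $w:=E_{13}^{(n)}\otimes p_1\otimes 1_{\tilde{N}_2}\otimes E_{21}^{(2)}\in M_0$. Given any $x_2\in\tilde{N}_2$, the element $y=1_{\Mn}\otimes\sigma_2(x_2)\in\Mn\otimes\sigma_2(\tilde{N}_2)\subseteq M_0$ satisfies $w^{*}yw=E_{33}^{(n)}\otimes p_1\otimes x_2\otimes E_{11}^{(2)}$ by a direct computation. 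Conjugating by the matrix units $E_{ij}^{(n)}\otimes 1\otimes 1\otimes 1$ available from Claim \ref{claim:Mn is in M0} then yields $\Mn\otimes p_1\otimes\tilde{N}_2\otimes E_{11}^{(2)}\subseteq M_0$. Combining this with $\Mn\otimes\sigma_1(\tilde{N}_1)=\Mn\otimes\tilde{N}_1\otimes 1_{\tilde{N}_2}\otimes E_{11}^{(2)}\subseteq M_0$ and applying Lemma \ref{lem:generate tensors}, with the factor $M_1=\Mn\otimes\tilde{N}_1$, $M_2=\tilde{N}_2$, and the non-zero projection $1_{\Mn}\otimes p_1\in M_1$, completes this part.

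For $\Mn\otimes\tilde{N}_1\otimes E_{22}^{(2)}\subseteq M_0$: the symmetric computation using $\sigma_2(p_2)$ in place of $\sigma_1(p_1)$ produces $w':=E_{13}^{(n)}\otimes 1_{\tilde{N}_1}\otimes p_2\otimes E_{12}^{(2)}\in M_0$. For $x_1\in\tilde{N}_1$, one obtains $(w')^{*}(1_{\Mn}\otimes\sigma_1(x_1))w'=E_{33}^{(n)}\otimes x_1\otimes p_2\otimes E_{22}^{(2)}$, and permuting the $\Mn$-slot via Claim \ref{claim:Mn is in M0} yields $\Mn\otimes\tilde{N}_1\otimes p_2\otimes E_{22}^{(2)}\subseteq M_0$. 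Coupled with $\Mn\otimes\sigma_2(\tilde{N}_2)=\Mn\otimes 1_{\tilde{N}_1}\otimes\tilde{N}_2\otimes E_{22}^{(2)}\subseteq M_0$, a second application of Lemma \ref{lem:generate tensors}, now with $M_1=\Mn\otimes\tilde{N}_2$, $M_2=\tilde{N}_1$, and projection $1_{\Mn}\otimes p_2$, in fact gives the stronger $\Mn\otimes\tilde{N}\otimes E_{22}^{(2)}\subseteq M_0$, which in particular contains $\Mn\otimes\tilde{N}_1\otimes E_{22}^{(2)}$.

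The main technical point is to locate the correct \emph{localized} partial isometries inside $M_0$: $\xi$ by itself mixes both off-diagonal positions of $\Matrix_2$ and the full projection $p\in\tilde{N}_1\otimes\tilde{N}_2$, so it cannot be used directly. The crucial maneuver is that right-multiplying by $E_{33}^{(n)}\otimes\sigma_i(p_i)$ collapses both issues simultaneously, selecting one off-diagonal matrix unit out of $C=E_{12}^{(2)}+E_{21}^{(2)}$ and compressing $p$ onto the correct tensor leg, after which the remaining work reduces to bookkeeping with $\Mn$-matrix units from Claim \ref{claim:Mn is in M0} and two standard applications of Lemma \ref{lem:generate tensors}.
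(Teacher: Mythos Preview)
Your proof is correct and follows essentially the same route as the paper. Both arguments use the element $E_{ij}^{(n)}\otimes p\otimes C$ (established during Claim \ref{claim:Mn is in M0}) to transport $\tilde N_2$ into the $E_{11}^{(2)}$ block, then cut down with $\sigma_1(p_1)$ to obtain $p_1\otimes\tilde N_2\otimes E_{11}^{(2)}$, and finish with Lemma \ref{lem:generate tensors}; the only differences are cosmetic (you localize with $\sigma_1(p_1)$ before conjugating rather than after, and you apply Lemma \ref{lem:generate tensors} with $M_1=\Mn\otimes\tilde N_1$ rather than working in the $E_{11}^{(n)}$ corner first), and you correctly observe that the symmetric argument in fact yields the full $\Mn\otimes\tilde N\otimes E_{22}^{(2)}$.
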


\begin{proof}
The proof for both algebras in the claim is the same, so we will argue for the first algebra. Denote $w=p \otimes (E_{12}^{(2)} + E_{21}^{(2)}) $. Then 
 \[
 E_{11}^{(n)} \otimes w =  \left ( E_{11}^{(n)} \otimes p \otimes 1_{\Matrix_2} \right ) \tilde{\pi}(e_{13}) \left( E_{31}^{(n)} \otimes 1_{\tilde{N}} \otimes 1_{\Matrix_2} \right)
 \]
 This element is in $M_0$  because the first multiplicand is in $M_0$ due to Remark \ref{rmk:p'v}, while the third one is in $M_0$ due to Claim \ref{claim:Mn is in M0}. 
 
By Claim \ref{claim:isolating N's}, for every $n_2 \in \tilde{N}_2$, the element  $E_{11}^{(n)}\otimes \sigma_2(n_2)$ is in $M_0$. Now, 
\[
E_{11}^{(n)}\otimes \Ad_w \left( \sigma_2(n_2) \right) = 
E_{11}^{(n)}\otimes \left( 1_{N_1} \otimes p_2n_2p_2 + p_1 \otimes n_2 \right) \otimes E_{11}^{(2)} 
\]
and since $p_1$ is orthogonal to $1_{N_1}$, 
\[
E_{11}^{(n)}\otimes \left[ \sigma_1(p_1) \cdot \Ad_w \left(  \sigma_2(n_2) \right) \right] = E_{11}^{(n)}\otimes p_1 \otimes n_2 \otimes E_{11}^{(2)}. 
\]
Elements of this form (all of which are in $M_0$) constitute the algebra $E_{11}^{(n)}\otimes p_1\otimes \tilde{N}_2\otimes E_{11}^{(2)}$, which together with the algebra $E_{11}^{(n)}\otimes \tilde{N}_1 \otimes 1_{\tilde{N}_2} \otimes E_{11}^{(2)}$ generate the algebra $E_{11}^{(n)}\otimes \tilde{N}_1 \otimes \tilde{N}_2 \otimes E_{11}^{(2)}$ by Lemma \ref{lem:generate tensors}.
Applying Claim \ref{claim:Mn is in M0}, we conclude that $\Mn \otimes \tilde{N}_1 \otimes \tilde{N}_2 \otimes E_{11}^{(2)}$ is contained in $M_0$.
\end{proof}

We are now ready to show that $\tilde{\pi}(A)''=M_0$  is equal to $\tilde{M}=\Mn\otimes \tilde{N}\otimes \Matrix_2$, and thus to complete the proof of Theorem \ref{Second Theorem}.

Retaining the notation $w$ from the previous claim we note that:
\begin{align*}
\left( E_{11}^{(n)}\otimes w \right) \left( E_{11}^{(n)} \otimes 1_{\tilde{N}} \otimes E_{11}^{(2)} \right) 
= E_{11}^{(n)} \otimes p \otimes E_{12}^{(2)}
\end{align*}
Together with the element $E_{11}^{(n)} \otimes p \otimes E_{11}^{(2)}$ we conclude that the algebra $E_{11}^{(n)} \otimes p \otimes \Matrix_2$ is contained in $M_0$, since $\Matrix_2$ is generated by $\{ E_{11}^{(2)}, E_{12}^{(2)} \}$. This algebra, together with the algebra $E_{11}^{(n)} \otimes \tilde{N} \otimes 1_{\Matrix_2}$ (which is in $M_0$ by Claim \ref{claim:generating N1 tensor N2}), generate $E_{11}^{(n)} \otimes \tilde{N} \otimes \Matrix_2$ due to Lemma \ref{lem:generate tensors}. Using Claim \ref{claim:Mn is in M0} we conclude that $M_0 = \tilde{M}$.

\subsection{Factorizable quantum channels}\label{subsec:quantum}
Fix $n\in \N$, and let $\mathcal{FM}(n)$ denote the set of all factorizable quantum channels $T:\Mn\to \Mn$. Then $\mathcal{FM}(n)$ is a compact and convex subset of a finite-dimensional Euclidean space.  In \cite{musat2021factorizable} it is shown how factorizable quantum channels are parameterized by traces on the \csalg $\MnMn$. 
Explicitly, a continuous, surjective and affine map $\Phi: \Traces{\Mn * \Mn} \rightarrow \mathcal{FM}(n)$ is constructed. Using this map, we deduce the statement from the introduction:

\begin{proof}[Proof of Corollary \ref{cor:quantum-channels}] Let $T\in \mathcal{FM}(n)$ and choose $\varphi \in \Phi^{-1}(T)$.  Then by Theorem \ref{Second Theorem}, there exists a sequence of extreme traces $\varphi_k$ that converges to $\varphi$. Consider finite factors $(M_k,\tau_k)$  and representations $\pi_k:\MnMn\to M_k$ with generating images, such that $\varphi_k=\tau_k\circ\pi_k$. Let $\alpha_k,\beta_k$ denote the restriction of $\pi_k$ to the first and second $\Mn$ copies. Then $T_k:=\Phi(\varphi_k)$ factorizes surjectively through $M$ via the maps $\alpha_k,\beta_k$.  Continuity of $\Phi$ implies that the sequence $T_k$  converges to $T$.
\end{proof}

\section{Variety of finite-dimensional representations} \label{section:findim}
In this section we discuss approximations of finite-dimensional traces by extreme traces of arbitrarily high dimension.
Such approximations are needed for the proof of Theorem \ref{Second Theorem}, since we have to dilate the algebra $M$ using a small projection in order to invoke Lemma \ref{lem:embedding N as a corner}. The finite-dimensionality allows us to  use basic tools from algebraic geometry.

Let $V$ be a real algebraic variety. Then we may consider $V$ both
with the Zariski topology and with the real topology induced from
the standard topology on $\R$. We will need the following.
\begin{Lemma}
\label{lem:real-alg-geometry}Let $V$ be an irreducible real algebraic
variety, and let $U$ be a non-empty Zariski-open subset of $V$.
Then $U$ is dense in $V$ in the real topology.
\end{Lemma}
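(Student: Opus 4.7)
The plan is to reduce the statement to a dimension-count in the real topology, using the fact that a proper Zariski closed subvariety of an irreducible variety has strictly smaller dimension, together with the fact that the smooth locus of $V$ is a real analytic manifold of the correct dimension.

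First, write $U = V \setminus W$ where $W$ is a proper Zariski closed subset of $V$. Since $V$ is irreducible, every proper Zariski closed subset has strictly smaller dimension, hence $\dim_\R W < \dim_\R V =: d$. Next, I would pass to the smooth locus $V_{\mathrm{sm}}$ of $V$, i.e.\ the set of nonsingular real points. Under the irreducibility assumption in force (which ensures, in particular, that $V$ is the closure of its smooth real points), $V_{\mathrm{sm}}$ is nonempty, Zariski-open in $V$, and it carries the structure of a real analytic manifold of dimension $d$. Moreover $V_{\mathrm{sm}}$ is dense in $V$ in the real topology, by standard stratification results for real algebraic sets.

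The main step, and the step I expect to be the main obstacle, is the following: show that the set $W \cap V_{\mathrm{sm}}$ has empty interior in $V_{\mathrm{sm}}$ with respect to the real topology. This set is a semi-algebraic (in fact real analytic) subset of the $d$-dimensional real analytic manifold $V_{\mathrm{sm}}$, and $\dim_\R (W \cap V_{\mathrm{sm}}) \leq \dim_\R W < d$. A proper real analytic (or semi-algebraic) subset of strictly smaller dimension cannot contain a nonempty open set of the ambient manifold; this is essentially the implicit function theorem, or alternatively a direct consequence of the structure theorem for semi-algebraic sets. Consequently $U \cap V_{\mathrm{sm}} = V_{\mathrm{sm}} \setminus W$ is dense in $V_{\mathrm{sm}}$ in the real topology.

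Finally, I would combine the two density statements: since $V_{\mathrm{sm}}$ is dense in $V$ in the real topology and $U \cap V_{\mathrm{sm}}$ is dense in $V_{\mathrm{sm}}$, it follows that $U$ is dense in $V$ in the real topology. The delicate point throughout is really the interplay between the two notions of dimension and the assumption that $V$ is irreducible in a sense strong enough to prevent pathological isolated real components (such as the origin in the real locus of $y^2 = x^2(x-1)$); in the concrete setting in which the lemma is applied later, namely representation varieties of finitely presented algebras valued in matrix algebras, this is automatic, and the argument goes through cleanly.
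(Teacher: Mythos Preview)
Your argument is correct and rests on the same core idea as the paper's proof: a dimension comparison via smooth points. The organization differs slightly. The paper argues by contradiction and works with the smooth locus of the \emph{complement} rather than of $V$: assuming $U$ is not real-dense, it finds a nonempty real-open set $W_0\subseteq V$ contained in $f^{-1}(0)$, picks a regular point $x$ of an irreducible component of $f^{-1}(0)$ lying in $W_0$, and concludes that $\dim_x f^{-1}(0)=\dim_x V$, contradicting the irreducibility of $V$. Your version is the direct form of the same count, carried out on $V_{\mathrm{sm}}$ instead; this costs you one extra (standard) step, namely the real-density of $V_{\mathrm{sm}}$ in $V$, which the paper's organization avoids. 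Both proofs ultimately lean on the same delicate input you correctly flag: that the regular real points of an irreducible real variety are real-dense (the paper simply cites this, applied to the components of $f^{-1}(0)$).
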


\begin{proof}
Since $V$ is irreducible, any non-empty Zariski-open subset is Zariski-dense,
and thus (by general topological considerations) is irreducible as well. This in particular applies to
an affine chart of $V$. As $V$ is covered by affine charts,  there is no loss of generality in assuming
that $V$ is affine to begin with. We thus identify $V\subseteq\R^{n}$
with the zero locus of a set of polynomials in $\R[x_{1},...,x_{n}]$.
There is also no loss of generality in assuming that $U$ is a basic
open set of $V$, that is, we assume $U=V\backslash f^{-1}(0)$ for
some polynomial $f\in\R[x_{1},...,x_{n}]$ which is non-zero on $V$. 

Assume by contradiction that $U$ is not dense in the real topology
on $V$. Then there exists a set $W$ which is open in the real topology
on $V$, and which is contained in $f^{-1}(0)$. Take a non-empty intersection of $W$ with an irreducible component of $f^{-1}(0)$. The set of regular
points of this irreducible component of the algebraic variety $f^{-1}(0)$ is a subvariety of the same dimension \cite[Ch. 2, Theorem 4]{onishchik1993lie}, thus, in particular, $W$ contains a smooth
point $x\in W$. This implies that the dimension of $f^{-1}(0)$ at
$x$ is the same as the dimension of $V$ at $x$. Thus $f^{-1}(0)$
is a closed proper algebraic subvariety of $V$ of the same dimension. But
this contradicts the irreducibility of $V$. 
\end{proof}

We will apply this lemma to representation varieties of certain \csalg. 
Let $\C^*\langle x_1,...,x_n \rangle$ be the free $*$-algebra on $n$ generators. Its elements may be considered as polynomials in the $2n$ non-commuting variables $x_1,...,x_n$ and $x_1^*,...,x_n^*$.  Let $A$ be a unital $*$-algebra with a finite generating set $S$ of size $n$, and fix an ordering $S=(s_1,...,s_n)$. The assignment $x_i\mapsto s_i$ extends to an isomorphism $\C^*\langle x_1,...,x_n \rangle/ I\cong A$  for some two-sided ideal $I$.
Fix $k\in \N$, and view $\Matrix_k(\C)\cong \C^{k^2}$ as a \emph{real} algebraic variety.  
 The collection $V_k=\mathrm{Hom}(A,\Matrix_k)$ of all unital $*$-representations $A \to \Matrix_k$ may be identified with the subset of the direct product variety $\Matrix_k^n$ consisting of all $y=(y_1,...,y_n)\in \Matrix_k^n$ for which $p(y)=0$ for $p\in I$. Since these relations of matrices are polynomials in the entries, we can view $V_k$ as the set of zeros of a collection of polynomials in commuting variables. Note that this collection can be taken to be finite by Noetherianity. This gives $V_k$ a structure of an  affine real algebraic variety\footnote{we note that it is not a complex variety since the $*$-operation is not algebraic}. We consider the Zariski topology on $V_k$, as well as the real topology coming from $\C^{k^2}$. The latter is the topology of pointwise convergence i.e. $\pi_i\to \pi$ iff $\pi_i(x)\to \pi(x)$ for all $x\in A$. 

\begin{Prop}\label{prop:surj-morphisms-zariski-open}
Let $A$ be a $*$-algebra and let $S=(s_1,...,s_n)\subseteq A$ be a finite generating set. Fix $k\in \N$ and let $V_k$ denote the variety of all unital $*$-homomorphisms $A \to \Matrix_k$ as defined above. Then the subset $V_k^\mathrm{sur}\subseteq V_k$ consisting of all surjective *-representations is open in the Zariski topology. Assume moreover that $V_k$ is irreducible and that $V_k^\mathrm{sur}$ is non-empty.  Then $V_k^\mathrm{sur}$ is dense in the Zariski topology, as well as in the real topology. 
\end{Prop}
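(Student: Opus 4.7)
My plan rests on the observation that surjectivity onto a finite-dimensional target is, in essence, a rank condition on a matrix whose entries are polynomials in the coordinates of $V_k$.

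First I would establish a uniform bound on the length of generating words. For any $y = (y_1,\ldots,y_n) \in \Matrix_k^n$, let $B(y)$ be the unital $*$-subalgebra of $\Matrix_k$ generated by the $y_i$, and let $B_j(y)$ be the linear span of words of length at most $j$ in the symbols $y_1,\ldots,y_n,y_1^*,\ldots,y_n^*$. The chain $B_0(y) \subseteq B_1(y) \subseteq \cdots$ sits inside $\Matrix_k$, whose dimension is $k^2$, so the sequence $\dim B_j(y)$ stabilizes by $j = L_0 := k^2$. Once $B_{j}(y) = B_{j+1}(y)$, the space $B_j(y)$ is closed under left multiplication by every generator and its adjoint, hence is a unital $*$-subalgebra, and so equals $B(y)$. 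Consequently $B(y) = B_{L_0}(y)$ for every $y$.

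Next I would rephrase surjectivity as a rank condition. Enumerate all formal words of length $\leq L_0$ as $W_1,\ldots,W_N$, and consider the $k^2 \times N$ complex matrix $M(y)$ whose $j$-th column is $W_j(y_1,\ldots,y_n,y_1^*,\ldots,y_n^*)$, identified with a vector in $\C^{k^2}$ via any fixed basis of $\Matrix_k$. Then by the first step, $\rho_y \in V_k^{\mathrm{sur}}$ iff $B(y) = \Matrix_k$ iff $\operatorname{rank}_{\C} M(y) = k^2$. The condition $\operatorname{rank}_{\C} M(y) < k^2$ is cut out by the simultaneous vanishing of all $k^2 \times k^2$ minors of $M(y)$. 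Each such minor is a polynomial in the entries of the $y_i$ with real coefficients (after identifying $\C = \R^2$), so the locus of non-maximal rank is Zariski-closed in $\Matrix_k^n$. Its complement is Zariski-open, and $V_k^{\mathrm{sur}}$ is the intersection of $V_k$ with this open locus; hence $V_k^{\mathrm{sur}}$ is Zariski-open in $V_k$.

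For the density claim, a general topological fact yields that any non-empty open subset of an irreducible space is Zariski-dense, and density in the real topology then follows directly from Lemma \ref{lem:real-alg-geometry}. There is no real obstacle in this argument: the only mildly technical ingredient is the word-length bound in the first step, and even that is a routine stabilization argument in finite dimensions.
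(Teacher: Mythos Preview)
Your proof is correct and rests on the same core idea as the paper's: surjectivity of $\pi$ is equivalent to some collection of monomials $m_i(\pi(S))$ spanning $\Matrix_k$, and linear independence of a fixed finite list of matrix-valued polynomials is an open condition given by the nonvanishing of minors. The density argument is identical to the paper's.

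The execution differs in a mildly interesting way. You first prove a \emph{uniform} word-length bound $L_0=k^2$ via the stabilization argument, which lets you describe $V_k^{\mathrm{sur}}$ globally as the locus where a single $k^2\times N$ matrix $M(y)$ has full rank. The paper instead works locally: for each $\pi\in V_k^{\mathrm{sur}}$ it selects $k^2$ monomials $m_1,\ldots,m_{k^2}$ (depending on $\pi$) whose evaluations at $\pi(S)$ form a basis, and defines $U_\pi$ as the set where that particular $k^2\times k^2$ determinant is nonzero; then $V_k^{\mathrm{sur}}=\bigcup_\pi U_\pi$. Your approach buys a single explicit polynomial description of the open set at the cost of the stabilization lemma; the paper's approach avoids any uniform bound but needs to cover by neighbourhoods. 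Both are entirely standard and equally valid.
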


\begin{proof}
Let $\pi \in V_k^{\mathrm{sur}}$ (the statement clearly holds if $V_k^{\mathrm{sur}}$ is empty). Then each element of $\Matrix_k$ is of the form $p(\pi(S))=p(\pi(s_1),...,\pi(s_n))$ for some polynomial $p\in \C^*\langle x_1,...,x_n \rangle$.
Stated differently, the set of all $m(\pi(S))$, where $m$ runs over the set of all monomials in $\C^*\langle x_1,...,x_n \rangle$, spans $\Matrix_{k}$. It follows that there exist $k^2$ monomials $m_1,...,m_{k^2}$ such that the set $\{m_1(\pi(S)),...,m_{k^2}(\pi(S))\}$ is a linear basis for $\Matrix_k$.
Let $U_{\pi}$ denote the set of all $\pi'\in V_k$ such that $\{m_1(\pi'(S)),...,m_{k^2}(\pi'(S))\}$ is linearly independent.
Then $U_\pi$ is a Zariski-open neighbourhood of $\pi$, as it is determined by the non-vanishing of the determinant. Moreover  $U_{\pi}\subseteq V_k^{\mathrm{sur}}$.
Repeating this for each $\pi\in V_k^{\mathrm{sur}}$, we conclude that $V_k^{\mathrm{sur}}=\bigcup_{\pi} U_\pi$ is Zariski-open, and in particular open in the real topology. 

Assume now that $V_k$ is irreducible. Then, inasmuch as  $V_k^{\mathrm{sur}}$ is not empty, it is dense in the real topology by Lemma \ref{lem:real-alg-geometry}, and in particular in the Zariski topology. 
\end{proof}
Let  $A$  be the universal \csalg generated (as a \csalg) by a finite set $S$ and some set of relations.  Let $A_0$ denote  the dense $*$-subalgebra generated by $S$. The universality property ensures that any $*$-representation of $A_0$ extends uniquely to a $*$-representation of $A$.
The space of $*$-representations $A\to \Matrix_k$ is therefore identified with the the space of $*$-representations $A_0\to \Matrix_k$. Moreover, since $\Matrix_k$ is finite-dimensional, a $*$-representation $A\to \Matrix_k$ is surjective if and only if its restriction to $A_0$ is surjective. 
Proposition \ref{prop:surj-morphisms-zariski-open}  therefore applies to the $C^*$-algebra $A$, that is, the set of $*$-representations $A\to \Matrix_k$ is a real algebraic variety, and the subset of surjective representations is open therein.  \\

Let $\mathrm{T}_{\mathrm{fd}}(A)$ denote the space of all finite-dimensional traces. $\mathrm{T}_{\mathrm{fd}}(A)$ is  always a face of $\Traces{A}$ \cite[Proposition 2.1]{musat2021factorizable}, but it is usually not closed \cite[Remark 2.4]{musat2021factorizable}, so in particular it is not a simplex. The following proposition shows that this face has dense extreme points, assuming certain properties on its representation variety, and as a result, $\overline{\mathrm{T}_{\mathrm{fd}}(A)}$ is a face of $\Traces{A}$ which is a Poulsen simplex.

\begin{Prop}\label{prop:finite-dimensional amplification}
Let $A$ be the universal \csalg generated by some finite set, and some relations. For $k\in \N$ denote by $V_k$ the variety of all $*$-representations $A\to \Matrix_k$.
Assume that for any $k\in \N$ for which  $V_k$ is non-empty, there exists infinitely many multiples $k'$ of $k$ for which $V_{k'}$ is an irreducible variety and contains a surjective representation.
Then for any $m\in \N$, the set of finite-dimensional extreme traces of dimension at least $m$ is dense in $\mathrm{T}_{\mathrm{fd}}(A)$. In particular, the face  $\overline{\mathrm{T}_{\mathrm{fd}}(A)}$ is a Poulsen simplex.
\end{Prop}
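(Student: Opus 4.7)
The plan is to realize any finite-dimensional trace $\varphi$ as the normalized trace of a matrix representation $\pi_0 \colon A \to \Matrix_n(\C)$, then amplify block-diagonally into a variety $V_{n\ell}$ which is irreducible with non-empty surjective locus (available by the hypothesis), and finally use Proposition \ref{prop:surj-morphisms-zariski-open} to perturb the amplification to a nearby surjective representation, which will yield an extreme trace of dimension $n\ell$.

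More precisely, fix $\varphi \in \mathrm{T}_{\mathrm{fd}}(A)$, a finite test set $F \subset A$ and $\eps > 0$. The finite-dimensional GNS von Neumann algebra $\pi_\varphi(A)''$ decomposes as $\bigoplus_i \Matrix_{n_i}(\C)$ with corresponding component representations $\pi_i$ and trace $\sum_i c_i \tr_{n_i}$, so $\varphi = \sum_i c_i \tr_{n_i}\circ\pi_i$. Replacing the $c_i$ by nearby non-negative rationals $c'_i = p_i/N$ summing to $1$ incurs only an $\eps$-error on $F$, since $|\tr_{n_i}(\pi_i(a))| \le \|a\|$. Setting $n = N \cdot \mathrm{lcm}_i(n_i)$, for each $i$ the integer $k_i := p_i n/(N n_i)$ is well-defined, and I let $\pi_0 \colon A \to \Matrix_n(\C)$ be the block-diagonal direct sum consisting of $k_i$ copies of $\pi_i$ over all $i$. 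A direct calculation gives $\tr_n \circ \pi_0 = \sum_i c'_i \tr_{n_i}\circ\pi_i$, which is $\eps$-close to $\varphi$ on $F$.

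By the hypothesis I can choose $\ell \in \N$ with $n\ell \geq m$ such that $V_{n\ell}$ is irreducible and admits a surjective representation. Let $\pi_0^{(\ell)} \colon A \to \Matrix_{n\ell}(\C)$ denote the amplification of $\pi_0$ given by $\ell$ block-diagonal copies; it realizes the same normalized trace as $\pi_0$ and is a point of $V_{n\ell}$. By Proposition \ref{prop:surj-morphisms-zariski-open}, the surjective locus $V_{n\ell}^{\mathrm{sur}}$ is dense in $V_{n\ell}$ in the real (pointwise) topology. Picking a surjective $\tilde\pi \in V_{n\ell}^{\mathrm{sur}}$ so close to $\pi_0^{(\ell)}$ that $|\tr_{n\ell}(\tilde\pi(a)) - \tr_{n\ell}(\pi_0^{(\ell)}(a))| < \eps$ for every $a \in F$, the resulting trace $\tilde\varphi := \tr_{n\ell}\circ\tilde\pi$ is extreme by Theorem \ref{thm:extremal-factor} (the image $\Matrix_{n\ell}(\C)$ being a factor), has dimension $n\ell \geq m$, and satisfies $\sup_{a\in F}|\tilde\varphi(a) - \varphi(a)| < 2\eps$; this proves the density claim.

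For the Poulsen conclusion, $\mathrm{T}_{\mathrm{fd}}(A)$ is a face of the Choquet simplex $\Traces{A}$, so its closure is a closed face and thus itself a Choquet simplex. The finite-dimensional extreme traces just constructed are extreme in $\Traces{A}$ and lie in $\overline{\mathrm{T}_{\mathrm{fd}}(A)}$, hence are extreme in it, and their density in $\mathrm{T}_{\mathrm{fd}}(A)$ extends to density in the closure, making $\overline{\mathrm{T}_{\mathrm{fd}}(A)}$ a Poulsen simplex. The only genuinely delicate point is the amplification step, where the hypothesis on the representation varieties $V_{k'}$ is essential: it ensures that an irreducible variety containing a surjective representation can always be reached by amplifying to a sufficiently large dimension, which is precisely what unlocks the density of surjective representations from Proposition \ref{prop:surj-morphisms-zariski-open}.
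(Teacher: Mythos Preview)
Your proof is correct and follows essentially the same approach as the paper's: realize the (rationally approximated) finite-dimensional trace as $\tr_n\circ\pi_0$ for a block-diagonal representation, amplify into a variety $V_{n\ell}$ that is irreducible with non-empty surjective locus, and invoke Proposition~\ref{prop:surj-morphisms-zariski-open} to perturb to a nearby surjective (hence factorial) representation. The paper organizes the argument by first treating the extreme case and then reducing the general case to it, whereas you handle the general case directly, but the content is the same.
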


\begin{proof}
Let $\varphi$ be a finite-dimensional trace and assume first that it is extreme. Then there is  $k\in \N$ and a  unital $*$-representation $\pi:A\to \Matrix_k$, so that $\varphi=\tr_k \circ \pi$. Fix $m\in \N$. By assumption, there exists  $k'\geq m$ which is a multiple of $k$ such that $V_k$ is irreducible and includes a surjective homomrophism. Fix a unital $*$-embedding $\Matrix_k\hookrightarrow \Matrix_{k'}$, say the diagonal embedding, and denote its composition with $\pi$ by $\tilde{\pi}:A \to \Matrix_{k'}$. Note that $\tr_{k'}\circ \tilde{\pi} =\tr_k \circ \pi$. By Proposition \ref{prop:surj-morphisms-zariski-open},
there exists a sequence of surjective unital $*$-representations $\tilde{\pi}_i:A\to \Matrix_{k'}$ such that $\tilde{\pi}_i(x)\to \tilde{\pi}(x)$ for all $x\in A$. The corresponding traces $\varphi_i=\tr_{k'}\circ\tilde{\pi}_i$ are extreme, since $\tilde{\pi}_i(A)=\Matrix_{k'}$ is a factor, and they converge pointwise to $\varphi$.

Now, if $\varphi$ is finite-dimensional but perhaps not extreme, it can still be written as a finite convex combination of extreme finite-dimensional traces. Moreover, for our density statement, we may assume that the coefficients of this convex combination are rational.
It follows once again that $\varphi=\tr_k\circ\pi$ for some $k\in \N$, and some (perhaps non-surjective) unital $*$-representation $\pi:A\to \Matrix_k$, see for example the argument in  \cite[Lemma 3.7]{hadwin2018stability}. We can then proceed as in the above paragraph.
\end{proof}

\begin{Cor}\label{cor: finite-dimensional reps of Mn are poulsen}
Suppose $A$ is either $C^*(F_d)$ for $2\leq d < \infty$, or a $d$-fold free product $\Mn * ... * \Mn$ with $n\geq 3, d \geq 2$.  Then for any $m\in \N$, the set of finite-dimensional extreme traces of dimension at least $m$ is dense in $\mathrm{T}_{\mathrm{fd}}(A)$. In particular, the face $\overline{\mathrm{T}_{\mathrm{fd}}(A)}$ is the Poulsen simplex.
\end{Cor}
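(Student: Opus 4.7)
The plan is to invoke Proposition \ref{prop:finite-dimensional amplification}, which reduces the corollary to verifying its combinatorial hypothesis in both classes of algebras: namely, that for every $k$ with $V_k$ non-empty there exist infinitely many multiples $k'$ of $k$ such that $V_{k'}$ is irreducible as a real algebraic variety and contains a surjective $*$-representation to $\Matrix_{k'}$.

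For $A = C^*(F_d)$ with $2 \leq d < \infty$, the identification $V_k \cong U(k)^d$ is immediate. I would observe that $U(k)$ is a smooth connected real algebraic group, so $U(k)^d$ is a smooth connected real algebraic variety and hence irreducible; existence of a surjective representation for every $k$ is easy, e.g.\ by taking two of the generators to be a diagonal unitary with distinct eigenvalues and the cyclic permutation matrix, which together generate $\Matrix_k$. Both conditions then hold for every $k' = jk$, so this case is essentially routine.

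The case $A = \Mn^{*d}$ with $n \geq 3$ and $d \geq 2$ is the main content. Writing $k = nm$, I would use the fact that every unital $*$-homomorphism $\Mn \to \Matrix_{nm}$ is unitarily equivalent to the standard amplification $x \mapsto x \otimes I_m$, so $\mathrm{Hom}(\Mn, \Matrix_{nm})$ is a single conjugation orbit of $U(nm)$ with stabilizer $I_n \otimes U(m)$. Thus $V_{nm} \cong (U(nm)/U(m))^d$, a smooth connected homogeneous space of a connected Lie group; it follows that $V_{nm}$ is irreducible as a real algebraic variety, being the surjective algebraic image of the irreducible variety $U(nm)^d$.

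The step I expect to be the main obstacle is the production of surjective representations $\Mn^{*d} \to \Matrix_{nml}$ for arbitrarily large $l$. To handle this, I plan to exploit Proposition \ref{prop:amalgamated}: composing the canonical surjection $\Mn * \Mn \twoheadrightarrow \Mn *_D \Mn \cong \Mn(C^*(F_{n-1}))$ with the amplification of any surjection $C^*(F_{n-1}) \twoheadrightarrow \Matrix_{ml}$ (which exists for $n - 1 \geq 2$, since two appropriately chosen unitaries generate any matrix algebra) yields a surjection $\Mn * \Mn \twoheadrightarrow \Matrix_{nml}$. When $d \geq 3$, I would extend this to $\Mn^{*d}$ by sending the remaining $d - 2$ copies of $\Mn$ through any fixed unital $*$-embedding into $\Matrix_{nml}$, which leaves the image unchanged. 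Combining these steps verifies the hypothesis of Proposition \ref{prop:finite-dimensional amplification} and hence delivers both the density and Poulsen conclusions.
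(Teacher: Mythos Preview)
Your proposal is correct and follows essentially the same route as the paper: both verify the hypotheses of Proposition~\ref{prop:finite-dimensional amplification} by using connectedness of $U(k)^d$ (resp.\ transitivity of the $U(k)$-conjugation action on $\mathrm{Hom}(\Mn,\Matrix_k)$) for irreducibility, and by producing surjections via two generating unitaries of $\Matrix_{k/n}$. Your invocation of Proposition~\ref{prop:amalgamated} for the surjection is exactly the construction the paper points to when it cites Lemma~\ref{lem:reps of Mn, genearotrs and relations} ``and the paragraph following it,'' so the two arguments coincide.
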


\begin{proof}
We will show that in each case $A$ satisfies the conditions of Proposition \ref{prop:finite-dimensional amplification}. 

In the case where $A=C^*(F_d)$ the representation variety $V_k$ is clearly the product $U(k)^d$ where $U(k)$ is the real algebraic group of unitary matrices of dimension $d$.  This space is connected and therefore irreducible \cite[\S1.2]{borel2012linear}. Moreover, it consists of a surjective representation since $\Matrix_k$ is generated by two unitaries. 

In the case where $A=\Mn * ... * \Mn$, the representation variety is a product $V_k={V'_k}^d$ where $V'_k$ is the representation variety of $\Mn$. Assume that $n$ divides $k$ for otherwise $V_k=V_k'=\emptyset$.  
Consider the real algebraic action of $U(k)$ on $V'_k$ by conjugation. This action is transitive, as any two representations are conjugate \cite[Lemma III.2.1]{davidson1996c}. Thus $V'_k$ is a quotient of an irreducible variety, and hence irreducible in itself. Therefore $V_k$ is also irreducible.
As long as $n$ divides $k$, $V^\mathrm{sur}_k$ is non-empty. Indeed, $\Matrix_k$ is generated by two unitaries which gives rise to a surjective $*$-homomorphism $\Mn * ... * \Mn \to \Matrix_n(\Matrix_{k/n})\cong \Matrix_k$ - see Lemma \ref{lem:reps of Mn, genearotrs and relations} and the paragraph following it. 
\end{proof}

\begin{Rmk}\label{rmk:approximating a single extremal finite-dimensional}
The first part of the proof of Proposition \ref{prop:finite-dimensional amplification} shows the following. Let $A = \Mn * ... * \Mn$ and let $\varphi$ be a finite-dimensional extreme trace of dimension $k$. Then for every multiple $k'$ of $k$, one can approximate $\varphi$ by a sequence of extreme traces $\varphi_n$, all of which are of dimension $k'$. This fact is used in the proof of Theorem \ref{Second Theorem} in \S\ref{sec:matrix algebras}.
\end{Rmk}

\section{Faces of the trace simplex}\label{sec:faces}

A closer look at the proofs of Theorem \ref{Thm:free products of commutative algebras}  and Theorem \ref{Second Theorem} shows that the constructed approximating traces of the algebra $A$ satisfy the following more refined property.\\

{\bf Refined approximation property:} Let $\varphi_1$ and $\varphi_2$ be two extreme traces on the unital \csalg $A$, and let $M_1$ and $M_2$ denote  their corresponding von Neumann algebras. Then the trace $\varphi =\frac{1}{2}(\varphi_1+\varphi_2)$ can be approximated arbitrarily well by a trace $\varphi'$ whose von Neumann algebra is an amplification of  the tensor product $M_1\otimes M_2$.\\

We recall that an amplification of a von Neumann algebra $M$ is any von Neumann algebra of the form $p\Mn(M)p$ where $n\in \N$ and $p\ne 0$ is a projection of $\Mn(M)$.

Given a unital \csalg $A$ and a property $\mathcal{P}$ of finite factors we denote 
\[
    \TracesP{A}{\PP}= \mathrm{conv}\{ \varphi\in \Traces{A}: M_\varphi \text{ is a finite factor satisfying } \PP\}
\]
where $M_\varphi$ denotes the von Neumann algebra (defined uniquely up to isomorphism) associated with $\varphi$. It is not hard to see that the closed convex hull of a collection of extreme points in a metrizable simplex is always a closed face and is a metrizable simplex as well.  As such, the closure $\overline{\TracesP{A}{\PP}}$ is a closed face of $\Traces{A}$, and is a simplex.

We are particularly interested in properties $\PP$ of finite factors which are closed under the following operations:
\begin{enumerate}
    \item \label{item:her-corner} If $M$ satisfies $\PP$ then so does any amplification of $M$.
    \item \label{item:her-tensor} If $M_1, M_2$ satisfy $\PP$ then so does their tensor product $M_1\otimes M_2$.
\end{enumerate}

\begin{Prop} \label{prop:poulsen-face}
    Let $A$ be a unital \csalg satisfying the refined approximation property. Let $\PP$ be a property of finite factors which is closed under taking tensor products and amplifications. Then the closed face $\overline{\TracesP{A}{\PP}}$ is a Poulsen simplex.
\end{Prop}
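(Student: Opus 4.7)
The plan is to reduce the statement to a single application of Lemma \ref{lem:approximating-sums-of-two-extreme-points} combined with a diagonal argument, keeping careful track of which finite factors appearing in the construction inherit property $\PP$.

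First, I would set $F := \overline{\TracesP{A}{\PP}}$ and let $E_\PP$ denote the collection of extreme traces on $A$ whose associated von Neumann algebra is a finite factor satisfying $\PP$. Every element of $E_\PP$ is extreme in the ambient simplex $\Traces{A}$ (by Theorem \ref{thm:extremal-factor}), hence extreme in any face containing it, so $E_\PP \subseteq \partial_e F$. Conversely, $F$ is by definition the closed convex hull of $E_\PP$ inside the metrizable Choquet simplex $\Traces{A}$, and Choquet-theoretic generalities then force $\partial_e F \subseteq \overline{E_\PP}$. Thus showing that $F$ is a Poulsen simplex is equivalent to showing that $E_\PP$ itself is dense in $F$.

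By Lemma \ref{lem:approximating-sums-of-two-extreme-points} it suffices to check that for any two extreme points $\psi_1, \psi_2 \in \partial_e F$ the midpoint $\tfrac{1}{2}(\psi_1+\psi_2)$ lies in $\overline{\partial_e F}$. Since $\partial_e F \subseteq \overline{E_\PP}$, a routine $\eps/2$-diagonal argument further reduces this to the case where $\psi_1, \psi_2$ are themselves in $E_\PP$, say $\psi_i = \varphi_i$ with associated finite factors $M_i$ both satisfying $\PP$.

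At this point I would invoke the refined approximation property applied to $(\varphi_1, \varphi_2)$: it produces traces $\varphi'$ arbitrarily close to $\tfrac{1}{2}(\varphi_1+\varphi_2)$ whose von Neumann algebra $M_{\varphi'}$ is an amplification of the tensor product $M_1 \otimes M_2$. Now a tensor product of finite factors is again a finite factor, and any amplification $p\Mn(N)p$ of a finite factor $N$ is also a finite factor, so $M_{\varphi'}$ is a finite factor, which by Theorem \ref{thm:extremal-factor} means $\varphi' \in \partial_e \Traces{A}$. By the two assumed closure properties of $\PP$ (tensor products, then amplifications), $M_{\varphi'}$ satisfies $\PP$, so $\varphi' \in E_\PP$. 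This gives the required approximation of $\tfrac{1}{2}(\varphi_1+\varphi_2)$ by elements of $E_\PP$ and closes the argument.

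The main conceptual hurdle is the first step: understanding the extreme points of $F$ as the closure of $E_\PP$ rather than $E_\PP$ itself. Once this is handled, the rest is essentially a bookkeeping exercise, since the refined approximation property was designed precisely to produce an approximating extreme trace whose factor is built from the original factors via the two operations that $\PP$ is hypothesised to respect.
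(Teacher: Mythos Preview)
Your proof is correct and follows the same overall line as the paper: apply Lemma~\ref{lem:approximating-sums-of-two-extreme-points} to the face $F=\overline{\TracesP{A}{\PP}}$, then use the refined approximation property together with the closure of $\PP$ under tensor products and amplifications to see that the approximating trace $\varphi'$ lands back in $E_\PP\subseteq\partial_eF$.

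The one place where you are more careful than the paper is worth noting. Lemma~\ref{lem:approximating-sums-of-two-extreme-points} asks you to handle midpoints of arbitrary $\psi_1,\psi_2\in\partial_eF$, and you correctly observe (via Milman's converse to Krein--Milman) that $\partial_eF\subseteq\overline{E_\PP}$, not necessarily $\partial_eF=E_\PP$; you then reduce to the case $\psi_i\in E_\PP$ by an explicit diagonal approximation. The paper's proof instead passes directly to $\varphi_1,\varphi_2\in E_\PP$, in effect invoking the \emph{argument} behind Lemma~\ref{lem:approximating-sums-of-two-extreme-points} (dyadic convex combinations of $E_\PP$ are dense in $F$ once midpoints are handled) rather than its literal statement. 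Both routes are valid; yours makes the bookkeeping explicit.
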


\begin{proof}
    By applying Lemma \ref{lem:approximating-sums-of-two-extreme-points} to $\overline{\TracesP{A}{\PP}}$ we see that it is enough to approximate traces of the form $\varphi=\frac{1}{2}(\varphi_1+\varphi_2)$ where $\varphi_1,\varphi_2\in \TracesP{A}{\PP} $ are extreme traces. The fact that $\overline{\TracesP{A}{\PP}}$ is a face implies that $\varphi_1$ and $\varphi_2$ are also extreme when considered as points in the whole simplex $\Traces{A}$. Now, the von Neumann algebras $M_1$ and $M_2$ associated with $\varphi_1$ and $\varphi_2$ are in $\mathcal{P}$. The refined approximation property ensures that $\varphi$ may be approximated by a trace $\varphi'$ whose von Neumann algebra is an amplification  $N$ of $M_1\otimes M_2$. Note that $N$ is a factor and so $\varphi'$ is an extreme trace. As $\PP$ is closed under tensor products and amplifications,  it contains $N$, and so $\varphi'\in \TracesP{A}{\PP}$.
\end{proof}

\begin{Cor}\label{cor:Poulsen-faces}
 Consider a \csalg of the form $A=A_0*A_1*A_2$ where $A_1$ and $A_2$ are \csalgs of continuous functions on compact metrizable spaces without isolated points, and $A_0$ is any unital separable \csalg. Let $\PP$ be a property of finite factors which is closed under taking tensor products and amplifications. Then the closed face $\overline{\TracesP{A}{\PP}}$ of $\Traces{A}$ is a Poulsen simplex.
 The same is true when $A$ is taken to be $\MnMn$ with $n\geq 4$.
\end{Cor}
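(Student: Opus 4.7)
The approach is to invoke Proposition \ref{prop:poulsen-face} directly, so the plan reduces to verifying the \emph{refined approximation property} for each of the two families of \csalgs $A$ appearing in the statement. Both checks are a matter of revisiting the constructions carried out in \S\ref{sec:free group} and \S\ref{sec:matrix algebras}: the approximating trace $\tilde\varphi$ built there is always an extreme trace whose associated von Neumann algebra is an amplification of $M_1 \otimes M_2$, where $M_i$ is the factor of $\varphi_i$.

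For the commutative case $A = A_0 * A_1 * A_2$, this is transparent from \S\ref{sec:free group}. In the notation used there, $M_i = N_i$, and the representation $\tilde\pi$ is shown to generate the entire algebra $M = \Mn(N_1 \otimes N_2) = \Mn(M_1 \otimes M_2)$, which is by definition an amplification of $M_1 \otimes M_2$. Hence the refined approximation property holds in this case.

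For the matrix case $A = \MnMn$ with $n\geq 4$, the factor of $\varphi_i$ is $M_i = \Mn \otimes N_i$, and the representation $\tilde\pi$ is shown in \S\ref{sec:matrix algebras} to generate $\tilde M = \Mn \otimes \tilde N_1 \otimes \tilde N_2 \otimes \Matrix_2$. Each $\tilde N_i$ was constructed via Lemma \ref{lem:embedding N as a corner} as an amplification of $N_i$ (see the remark following that lemma). Writing amplifications in the usual $(\cdot)^t$ notation, a direct calculation gives
\[
\tilde M \;\cong\; \Matrix_{2n} \otimes \tilde N_1 \otimes \tilde N_2 \;\cong\; (N_1 \otimes N_2)^{2 n t_1 t_2} \;\cong\; (M_1 \otimes M_2)^{2 t_1 t_2 / n},
\]
where $t_i$ denotes the amplification index of $\tilde N_i$ over $N_i$. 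Hence $\tilde M$ is an amplification of $M_1 \otimes M_2$, as required.

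The only real obstacle is this bookkeeping step for the matrix case: one has to confirm that the auxiliary corner-enlargements $\tilde N_i$ and the extra $\Matrix_2$ tensor factor, both introduced for technical reasons in \S\ref{sec:matrix algebras}, do not drag $\tilde M$ outside the class of amplifications of $M_1 \otimes M_2$. Since the class of amplifications of a given finite factor is closed under further amplifications and under tensoring with full matrix algebras, the chain of isomorphisms above makes this manifest. Once the refined approximation property is established in both cases, Proposition \ref{prop:poulsen-face} immediately yields that $\overline{\TracesP{A}{\PP}}$ is a Poulsen simplex.
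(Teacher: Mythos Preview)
Your proposal is correct and follows exactly the approach the paper intends: the corollary is an immediate consequence of Proposition~\ref{prop:poulsen-face} once one observes that the constructions in \S\ref{sec:free group} and \S\ref{sec:matrix algebras} yield approximating factors that are amplifications of $M_1\otimes M_2$, which is precisely what the paper asserts at the start of \S\ref{sec:faces} without spelling out the bookkeeping you provide.

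One small point worth noting: in the matrix case you treat $\tilde M$ as built directly from the original $N_1,N_2$, but the proof in \S\ref{sec:matrix algebras} includes a preliminary step (via \S\ref{section:findim}) in which finite-dimensional $N_i$ are first replaced by higher-dimensional matrix algebras before the main construction proceeds. This does not affect your conclusion, since those replacements are themselves amplifications of the original $N_i$ (the dimension is a multiple of the old one by Remark~\ref{rmk:approximating a single extremal finite-dimensional}), and amplifications compose; but strictly speaking your chain of isomorphisms should accommodate this extra layer. The paper itself leaves this implicit as well.
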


Here are a few particularly interesting examples of properties of finite factors which are closed under amplifications and tensor products, thus giving rise to certain faces which are Poulsen. For simplicity, we restrict our attention to von Neumann algebras on separable Hilbert spaces.
\begin{enumerate}
    \item Let $\PP=\mathrm{fd}$ be the property of being finite-dimensional. The face $\TracesP{A}{\mathrm{fd}}$ is the space of all finite-dimensional traces, previously discussed in \S\ref{section:findim}. 
    \item Let $\PP$ be the property of von Neumann algebras being \emph{amenable} (see \cite{anantharaman2017introduction}). By \cite[Theorem 3.2.2]{brown06}, $\TracesP{A}{\PP}$ coincides with the set of \emph{uniformly amenable traces} (see \cite[Def. 3.2.1]{brown06}). 
    \item A trace $\varphi$ on $A$ is called \emph{amenable} if it admits an extension to a state on $\BH$, for some embedding $A\subseteq \BH$, such that $\varphi(uTu^*)=\varphi(T)$ for all unitary elements $u\in A$ and operators $T\in \BH$ \cite[Def. 3.1.1]{brown06}. For \csalgs satisfying the \emph{local lifting property} (see \cite{pisier2020tensor}), a trace $\varphi$ is amenable if and only if its von Neumann algebra is Connes-embeddable, i.e there exists an embedding of $\pi_\varphi(A)''$ into the ultrapower $\mathcal R^\omega$ of the hyperfinite $\twoone$-factor $\mathcal R$ \cite[Prop. 6.3.4]{brown06}. Commutative \csalgs and nuclear \csalgs satisfy the local lifting property \cite[Cor. 7.12]{pisier2020tensor} and so also free products of such \cite[Theorem. 9.44]{pisier2020tensor}. Thus, for such algebras, the space of amenable traces coincides with $\TracesP{A}{\PP}$ where $\PP$ is the property of being Connes-embeddable.
\end{enumerate}
Other interesting properties of finite factors which are closed under tensor products and amplifications include McDuff, Property (T), Haagerup, Properly proximal, Gamma, weak amenability, and weak exactness. For the sake of conciseness, we omit the list of definitions and proofs that these properties are indeed closed in the appropriate sense.\footnote{We thank Cyril Houdayer for suggesting these properties.} 

\section{Poulsen simplex obstructions}\label{sec:obstructions}
In this last section, we mention a few necessary conditions for the trace space of a group to be a Poulsen simplex. With certain modifications, similar conditions may be stated for \csalgs. We will denote here the set of traces on a group $G$ by $\Tr{G}$. The space of extreme traces, i.e. characters, as $\Ch{G} = \partial_e \Tr{G}$, as is common for groups.

\subsection{Property (ET)}
The obstruction we formulate is based on the following group property:

\begin{Def}
A group $G$ is said to have property (ET) if the trivial character $1_G$ is isolated in $\Ch{G}$.
\end{Def}
Clearly, finite groups have property (ET). More generally, any group with Kazhdan's property (T) has property (ET); this is easily seen when recalling that one of the equivalent definitions of property (T) is that the trivial representation is isolated in the unitary dual with the Fell topology \cite[Theorem 1.2.5]{bekka_harp}. Other types of examples are ones of the form $\mathrm{SL}_2(\Z[1/p])$ for $p$ prime, see \cite{levit2023characterlimits}.

We start by observing that property (ET) is inherited by quotients:

\begin{Lemma} \label{lem:quotient of ET}
If a group $G$ has property (ET) then so does any quotient of $G$.
\end{Lemma}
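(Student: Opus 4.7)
The plan is to exploit the natural pullback of traces induced by the quotient map $q\colon G \to G/N$. Define $q^*\colon \mathrm{Tr}(G/N) \to \mathrm{Tr}(G)$ by $q^*(\varphi) = \varphi \circ q$. This map is continuous in the pointwise convergence topology and injective (since $q$ is surjective), and it sends $1_{G/N}$ to $1_G$.

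The key observation is that $q^*$ restricts to a map $\mathrm{Ch}(G/N) \to \mathrm{Ch}(G)$. Indeed, if $\varphi$ is a character on $G/N$, then the GNS representation of $\varphi \circ q$ is simply the composition of $q$ with the GNS representation of $\varphi$, so the associated von Neumann algebras coincide. In particular, one is a factor iff the other is, so $\varphi \circ q$ is extreme whenever $\varphi$ is extreme (cf. Theorem \ref{thm:extremal-factor}).

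Now assume $G$ has property (ET), and choose an open neighborhood $U$ of $1_G$ in $\mathrm{Tr}(G)$ with $U \cap \mathrm{Ch}(G) = \{1_G\}$. Then $V := (q^*)^{-1}(U)$ is an open neighborhood of $1_{G/N}$ in $\mathrm{Tr}(G/N)$. Any $\psi \in V \cap \mathrm{Ch}(G/N)$ satisfies $q^*\psi \in U \cap \mathrm{Ch}(G) = \{1_G\}$, so $\psi \circ q = 1_G$, and by surjectivity of $q$ this forces $\psi = 1_{G/N}$. Hence $1_{G/N}$ is isolated in $\mathrm{Ch}(G/N)$, proving property (ET) for $G/N$.

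There is no real obstacle here; the argument is a direct functoriality/pullback verification. The only thing worth being careful about is confirming that pulling back an extreme trace yields an extreme trace, which follows immediately from the identification of GNS von Neumann algebras above.
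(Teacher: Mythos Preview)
Your proof is correct and follows essentially the same approach as the paper: both use the pullback $\varphi\mapsto\varphi\circ q$, observe that it sends characters to characters, and deduce isolation of $1_{G/N}$ from isolation of $1_G$. The paper phrases this via sequences rather than open neighborhoods and simply asserts that characters pull back to characters (``it is not hard to see''), whereas you supply the GNS justification; these are cosmetic differences.
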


\begin{proof}
Let $q:G\to H$ be  a quotient map between groups. For any trace $\varphi$ on $H$, the pullback $\varphi \circ q $ is a trace on $G$.  Moreover, it is not hard to see that if $\varphi$ is a character then so is $\varphi\circ q$. Thus,  if $\varphi_n$ are characters of $H$ converging to the trivial character $1_H$, then $\varphi_n \circ q$ are characters of $G$ converging to the trivial trace $1_G=1_H\circ q$.  Since $G$ has property (ET) the sequence $\varphi_n\circ q$ eventually stabilizes on $1_G$. It follows that $\varphi_n$ eventually stabilizes on $1_H$.   
\end{proof}

\begin{Lemma}\label{lem:property-ET-traces}
Let $G$ be a group with property (ET). Let $\varphi_n$ be a sequence of traces on $G$, and let $\mu_n$ be the corresponding Borel probability measures on $\Ch{G}$ whose barycenter is $\varphi_n$. If $\varphi_n$ converge to the trivial character $1_G$, then $\mu_n(\{1_G\})\to 1$.
\end{Lemma}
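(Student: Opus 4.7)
The plan is to argue by contradiction. Suppose $\lambda_n := \mu_n(\{1_G\})$ does not converge to $1$, so after passing to a subsequence $\lambda_n \leq 1-\eps$ for some fixed $\eps > 0$. I would decompose $\mu_n = \lambda_n \, \delta_{1_G} + (1-\lambda_n)\sigma_n$, where $\sigma_n$ is the normalized restriction of $\mu_n$ to $\Ch{G}\setminus\{1_G\}$. Using weak-$*$ compactness of the Borel probability measures on the compact set $\Tr{G}$, pass to a further subsequence along which $\lambda_n \to \lambda \in [0, 1-\eps]$ and $\sigma_n \to \sigma$ weak-$*$ to a Borel probability measure $\sigma$ on $\Tr{G}$.

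The first key step will be to show that $\sigma(\{1_G\}) = 0$. Property (ET) gives an open set $V \subseteq \Tr{G}$ with $V \cap \Ch{G} = \{1_G\}$. The closed set $F := \Tr{G}\setminus V$ then contains $\Ch{G}\setminus\{1_G\}$ and hence the support of every $\sigma_n$, so $\sigma_n(F) = 1$. The portmanteau theorem for closed sets then yields $\sigma(F) \geq \limsup_n \sigma_n(F) = 1$, so $\sigma$ is concentrated on $F$, which avoids $1_G$.

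The second step is to derive a contradiction. Writing $\psi_n$ for the barycenter of $\sigma_n$, we have $\varphi_n = \lambda_n \cdot 1_G + (1-\lambda_n)\psi_n$, so passing to the limit using continuity of the barycenter map together with $\varphi_n \to 1_G$ gives that $\psi := \mathrm{bary}(\sigma) = 1_G$. Evaluating at an arbitrary $g \in G$ then yields $\int \varphi(g)\, d\sigma(\varphi) = 1$, while $|\varphi(g)| \leq 1$ for every trace $\varphi$. The equality case of the triangle inequality for complex numbers forces $\varphi(g) = 1$ for $\sigma$-a.e. $\varphi$. Since $G$ is countable, intersecting over all $g \in G$ will give $\sigma(\{1_G\}) = 1$, contradicting the first step.

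The main conceptual point — and where property (ET) enters essentially — is the first step, which uses the isolation of $1_G$ in $\Ch{G}$ to separate the supports of the $\sigma_n$ from $1_G$ uniformly in $n$. The final maximum-principle argument via countability of $G$ is standard and simply reflects that $1_G$ is the unique common maximizer of all functionals $\varphi \mapsto \mathrm{Re}(\varphi(g))$ on $\Tr{G}$.
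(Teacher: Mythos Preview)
Your argument is correct (with the implicit assumption that $G$ is countable, which is the standing context of the paper and is needed for your final countable-intersection step). It is, however, a genuinely different route from the paper's proof.

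The paper argues directly and quantitatively: from property~(ET) one extracts a finite set $F\subseteq G$ and $\eps>0$ such that $\max_{g\in F}\bigl(1-\Re\chi(g)\bigr)\geq\eps$ for every nontrivial $\chi\in\Ch{G}$, and then a one-line integral estimate gives
\[
\mu_n\bigl(\Ch{G}\setminus\{1_G\}\bigr)\;\leq\;\frac{1}{\eps}\,\max_{g\in F}\lvert 1-\varphi_n(g)\rvert \longrightarrow 0.
\]
Your proof instead proceeds by contradiction, passes to weak-$*$ limits of the conditioned measures $\sigma_n$, uses the portmanteau theorem to keep the limit $\sigma$ away from $1_G$, and then applies a maximum-principle argument to force $\sigma(\{1_G\})=1$. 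The paper's approach is shorter, avoids any compactness/subsequence machinery, yields an explicit rate, and does not rely on countability of $G$. Your approach, on the other hand, isolates the topological content of property~(ET) (as a separation statement in $\Tr{G}$) rather than its quantitative content, which is a nice conceptual complement.
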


\begin{proof}
We recall that traces take values in the unit disk of $\C$. The assumption that $G$ has property (ET) implies that there exists $\eps>0$, and a finite set $F\subseteq G$ such that for all $1\ne \chi\in \Ch{G}$:
\[
    \max_{g\in F}(1- \Re(\chi(g)))\geq \eps
\]
Therefore
\begin{align*}
    \max_{g\in F}|1-\varphi_n(g)| &=\max_{g\in F}\lvert\int_{\Ch{G}} 1-\chi(g) d\mu_n(\chi)\rvert \\ &\geq
    \max_{g\in F}\Re \left( \int_{\Ch{G}} 1-\chi(g) d\mu_n(\chi)\right) \\
    &\geq \eps \cdot \mu_n(\chi\in \Ch{G}\mid \chi\ne 1)
\end{align*}
The statement follows as the left hand side converges to $0$.
\end{proof}

In contrast to property (T), property (ET) passes to free products:
\begin{Prop}\label{prop:property-ET-free-products}
If the groups $G_1,...,G_m$ have property (ET) then so does their free product $G=G_1*...*G_m$.
\end{Prop}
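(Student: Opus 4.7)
The plan is to argue by contradiction: assume $\chi_n\in\Ch{G}$ converge to $1_G$ with $\chi_n\ne 1_G$ for infinitely many $n$. For each such $n$, the GNS tracial representation yields a finite factor $(M_n,\tau_n)$ with $M_n\ne\C$, together with $\pi_n\colon G\to M_n$ satisfying $\chi_n=\tau_n\circ\pi_n$. The main idea is to extract projections $P_{n,i}\in M_n$ with $\tau_n(P_{n,i})\to 1$ and then show, via Kaplansky's inequality, that their lattice meet cannot be nonzero.

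Concretely, let $P_{n,i}$ be the orthogonal projection onto $\mathrm{Fix}(\pi_n(G_i))\subseteq H_n$. I would first verify that $P_{n,i}\in M_n$: every $T\in M_n'=\pi_n(G)'$ preserves $\mathrm{Fix}(\pi_n(G_i))$ (since $\pi_n(g)Tv=T\pi_n(g)v=Tv$ whenever $g\in G_i$ and $v$ is fixed by $\pi_n(G_i)$), hence $T$ commutes with $P_{n,i}$ and $P_{n,i}\in M_n''=M_n$. Next, using the standard correspondence between the central decomposition of $N_{n,i}:=\pi_n(G_i)''$ and the Choquet decomposition $\mu_{n,i}$ of the restricted trace $\chi_n|_{G_i}$, one identifies $P_{n,i}$ with the central projection in $N_{n,i}$ supporting the trivial-character part; its range is precisely the subspace on which $N_{n,i}$ acts trivially, so $\tau_n(P_{n,i})=\mu_{n,i}(\{1_{G_i}\})$.

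Since $\chi_n\to 1_G$ forces $\chi_n|_{G_i}\to 1_{G_i}$ as traces, and each $G_i$ has property (ET), Lemma \ref{lem:property-ET-traces} gives $\mu_{n,i}(\{1_{G_i}\})\to 1$, hence $\tau_n(P_{n,i})\to 1$. Setting $P_n:=P_{n,1}\wedge\cdots\wedge P_{n,m}\in M_n$, iterated use of Kaplansky's inequality $\tau(p\wedge q)\ge\tau(p)+\tau(q)-1$ yields
\[
\tau_n(P_n)\ge\sum_{i=1}^m\tau_n(P_{n,i})-(m-1)\xrightarrow{n\to\infty}1,
\]
so $P_n\ne 0$ for all sufficiently large $n$. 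But the range of $P_n$ equals $\bigcap_i\mathrm{Fix}(\pi_n(G_i))=\mathrm{Fix}(\pi_n(G))$, and this space is $\{0\}$ whenever $M_n\ne\C$: a nonzero $G$-fixed vector $v$ spans a one-dimensional $\pi_n(G)$-invariant subspace whose orthogonal projection lies in $\pi_n(G)'=M_n'$, so $M_n$ preserves $\C v$ and induces a unital $*$-homomorphism $M_n\to\C$, which is impossible for a factor distinct from $\C$. This contradiction completes the argument. The delicate point I anticipate is the identification $\tau_n(P_{n,i})=\mu_{n,i}(\{1_{G_i}\})$ via the central decomposition of $N_{n,i}$; everything else is a clean combination of Kaplansky's inequality, the key lemma, and the simplicity of nontrivial factors.
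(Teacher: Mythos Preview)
Your approach coincides with the paper's: restrict the characters to each $G_i$, apply Lemma~\ref{lem:property-ET-traces} to push the mass at the trivial character toward $1$, identify this mass with $\tau_n(P_{n,i})$, use Kaplansky's formula to force $P_n\ne 0$, and derive a contradiction from factoriality of $M_n$. The paper reduces to $m=2$ by induction while you handle general $m$ directly, but this is cosmetic; the identification $\tau_n(P_{n,i})=\mu_{n,i}(\{1_{G_i}\})$ that you flag as delicate is likewise used implicitly in the paper.

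One small slip: your assertion that $\chi_n\ne 1_G$ forces $M_n\ne\C$ is false---any nontrivial one-dimensional unitary character of $G$ (and such exist whenever some $G_i$ has nontrivial abelianization) yields $M_n=\C$ with $\chi_n\ne 1_G$. The fix, which is also the paper's endgame, is cleaner than your route through simplicity: since $P_n$ commutes with $\pi_n(G)$ it lies in $M_n'\cap M_n=Z(M_n)$, so factoriality gives $P_n\in\{0,1\}$; but $P_n=1$ would make $\pi_n$ trivial and hence $\chi_n=1_G$, so $P_n=0$, contradicting $\tau_n(P_n)>0$.
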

\begin{proof}
By induction, it suffices to prove the statement for $m=2$.
Assume by contradiction  we have characters $\varphi_n\in \Ch{G}$ that converge to $1_G$, and yet none of them is equal to $1_G$.
For $i=1,2$, the restrictions $\varphi_n\mid_{G_i}$ are traces on $G_i$, and by the Lemma \ref{lem:property-ET-traces}, we have that $\lim_n \mu_{i,n}(\{1_{G_1}\})= 1$ where $\mu_{i,n}$ is the probability measure supported on $\Ch{G_i}$ whose barycenter is $\varphi_n\restrict{G_i}$.
It follows that for a sufficiently large $n$, which we now fix, both $\mu_{1,n}(\{1_{G_1}\}), \mu_{2,n}(\{1_{G_2}\})$ are strictly greater than $\frac{1}{2}$. We denote $\varphi=\varphi_n$. We will reach a contradiction by showing that $\varphi=1_G$. 

Let $\pi:G\to \mathcal{U}(\mathcal{H})$ be a tracial representation corresponding to $\varphi$. Let $p_i:\mathcal{H}\to \mathcal{H}^{G_i}$ denote the projection onto the subspace of $G_i$-fixed vectors. Then $p_i$ is clearly in $\pi(G_i)''$ and in particular in the von Neumann algebra $M:=\pi(G)''$.   By the above, we have that $\tau_M(p_1),\tau_M(p_2)> \frac{1}{2}$. Denote $p=p_1\wedge p_2$, that is, the maximal projection which is contained in both $p_1$ and $p_2$. Then $p$ is the projection onto the subspace of all the $G$-fixed vectors in $\mathcal{H}$, and it is in particular in the center of $M$.  By Kaplansky's formula \cite[Proposition 2.4.5]{anantharaman2017introduction}
\[
    \tau_M(p) \geq \tau(p_1)+\tau(p_2)-\tau(1_M) >0,
\]
and so $p\ne 0$.
But $M$ is a factor, hence  $p=1_M$ and so $\varphi=1_G$.
\end{proof}

Let $H$ be a group endowed with homomorphisms  $f_1:H\to G_1$ and $f_2:H\to G_2$ into two other groups $G_1$ and $G_2$. The amalgamated product  $G_1*_H G_2$ is the quotient of the free product of $G_1$ and $G_2$ by the normal subgroup generated by elements of the form $f_1(h)f_2(h)^{-1}$ for $h\in H$. Amalgamated products of finite groups constitute a rich class of virtually free groups. Using Proposition \ref{prop:property-ET-free-products} together with Lemma \ref{lem:quotient of ET} we immediately get: 
\begin{Cor}\label{cor: amalgam-ET}
If the groups $G_1$ and $G_2$ have property (ET), then any amalgamated product of the form $G_1*_H G_2$ has property (ET) as well. 
\end{Cor}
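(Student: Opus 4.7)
The plan is to combine the two results immediately preceding the corollary. By Proposition \ref{prop:property-ET-free-products}, the free product $G_1 * G_2$ has property (ET), since both factors do. By the definition of the amalgamated product recalled just before the corollary statement, the amalgamated product $G_1 *_H G_2$ is a quotient of $G_1 * G_2$ (namely, by the normal subgroup generated by $\{f_1(h)f_2(h)^{-1} : h \in H\}$). Applying Lemma \ref{lem:quotient of ET} to this quotient map then yields that $G_1 *_H G_2$ has property (ET).

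There is really no obstacle here: the corollary is a direct two-line deduction from the preceding proposition and lemma, as the paper itself indicates with the phrase ``we immediately get''. The conceptual content lies entirely in the proof of Proposition \ref{prop:property-ET-free-products}, which handles the nontrivial (and genuinely surprising, given the contrast with property (T)) behavior under free products; the passage to amalgamated products is then a formal quotient argument.

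If one were writing this out fully, the only stylistic choice is whether to bother spelling out the quotient. I would phrase it as: ``By Proposition \ref{prop:property-ET-free-products}, $G_1 * G_2$ has property (ET). Since $G_1 *_H G_2$ is a quotient of $G_1 * G_2$, Lemma \ref{lem:quotient of ET} implies that $G_1 *_H G_2$ has property (ET) as well.'' No further computation is needed.
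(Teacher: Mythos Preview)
Your proof is correct and matches the paper's approach exactly: apply Proposition \ref{prop:property-ET-free-products} to get that $G_1*G_2$ has property (ET), then use Lemma \ref{lem:quotient of ET} together with the fact that $G_1*_H G_2$ is a quotient of $G_1*G_2$.
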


Next, we consider a relative setting of traces and characters. For a normal subgroup $N\leq G$ we denote by $\relTr{G}{N}:=\Tr{N}^G$ the set of traces on $N$ which are fixed by the conjugation action of $G$. This is a compact and convex set, and we denote by $\relCh{G}{N}=\partial_e\relTr{G}{N}$ its set of extreme points. Clearly, $1_N\in \relCh{G}{N}$.

\begin{Prop}\label{prop:property-ET-normal-subgroups}
Let $N$ be a normal subgroup of $G$. If $N$ has property (ET), then $1_N$ is an isolated point of $\relCh{G}{N}$
\end{Prop}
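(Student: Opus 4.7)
The plan is to argue by contradiction: assume a sequence $\varphi_k \in \relCh{G}{N}$ satisfies $\varphi_k \to 1_N$ with $\varphi_k \ne 1_N$ for every $k$, and derive a contradiction using property (ET) of $N$. Since $\Tr{N}$ is a Choquet simplex, each $\varphi_k$, viewed simply as a trace on $N$, is the barycenter of a unique Borel probability measure $\mu_k$ supported on $\Ch{N}$. The whole argument revolves around these representing measures $\mu_k$.

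The key structural observation I would establish next is that each $\mu_k$ is not only $G$-invariant but $G$-ergodic. The conjugation action of $G$ on $N$ induces a continuous affine action on $\Tr{N}$ preserving $\Ch{N}$ and fixing $1_N$. Since $\varphi_k$ is $G$-invariant and its Choquet representing measure is unique, the pushforward $g_*\mu_k$ also represents $\varphi_k$ and hence equals $\mu_k$; thus $\mu_k$ is $G$-invariant. Extremality of $\varphi_k$ inside $\relTr{G}{N}$ then upgrades this to $G$-ergodicity: any non-trivial convex decomposition of $\mu_k$ into $G$-invariant probability measures would, upon taking barycenters, yield a non-trivial decomposition of $\varphi_k$ inside $\relTr{G}{N}$, contradicting extremality.

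Property (ET) of $N$ now enters in two ways. First, it guarantees that $\{1_N\}$ is isolated—in particular clopen and hence Borel—in $\Ch{N}$, and $\{1_N\}$ is obviously $G$-invariant. Second, applying Lemma \ref{lem:property-ET-traces} to the sequence $\varphi_k \to 1_N$ yields $\mu_k(\{1_N\}) \to 1$. Combining this with $G$-ergodicity of $\mu_k$, which forces $\mu_k(\{1_N\}) \in \{0,1\}$, I conclude that $\mu_k(\{1_N\})=1$ for all sufficiently large $k$, so $\mu_k = \delta_{1_N}$ and therefore $\varphi_k = 1_N$, contradicting the choice of the sequence.

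The one technical point I would want to double-check carefully is the equivalence between extremality of $\varphi_k$ in $\relTr{G}{N}$ and $G$-ergodicity of $\mu_k$. This is the standard "unique ergodic decomposition of invariant measures" pattern, but it relies crucially on the uniqueness of the Choquet representation, i.e.\ on the simplex structure of $\Tr{N}$; I view it as the main (if routine) obstacle in the argument.
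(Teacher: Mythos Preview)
Your argument is correct and rests on the same two ingredients as the paper's: Lemma~\ref{lem:property-ET-traces} to force $\mu_k(\{1_N\})\to 1$, and extremality of $\varphi_k$ in $\relTr{G}{N}$ to conclude. The difference is in how extremality is invoked. You pass through the general correspondence \emph{extreme $G$-invariant trace $\leftrightarrow$ $G$-ergodic boundary measure}, then apply the $0$--$1$ law to the invariant singleton $\{1_N\}$. The paper bypasses this machinery entirely: once $\alpha:=\mu(\{1_N\})>0$, it simply writes
\[
\varphi=\alpha\,1_N+(1-\alpha)\psi,\qquad \psi:=(1-\alpha)^{-1}(\varphi-\alpha\,1_N),
\]
observes that $\psi\in\relTr{G}{N}$ (it is the barycenter of the normalized restriction of $\mu$ to $\Ch{N}\setminus\{1_N\}$, and $G$-invariance is immediate), and concludes $\varphi=1_N$ by extremality. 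So the step you single out as ``the main (if routine) obstacle'' is exactly what the paper handles in one line by writing down the specific convex decomposition rather than proving ergodicity in general. Your route is a bit more conceptual; the paper's is shorter and avoids the need to verify the barycenter map is an affine bijection onto $G$-invariant boundary measures.
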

\begin{proof}
Let $\varphi\in \relCh{G}{N}$. Viewing  $\varphi$ as a trace on $N$, we denote by $\mu$ the corresponding probability measure supported on $\Ch{N}$ whose barycenter is $\varphi$. As $\varphi$ varies and approaches $1_N\in \Tr{N}$, the value $\alpha:=\mu(\{1_G\})$ tends to $1$ - this is the content of Lemma \ref{lem:property-ET-traces}. In particular, we may restrict to a sufficiently small neighbourhood of $1_N\in \Tr{N}$ so that $\alpha$ is positive. It is left to show that in that case $\varphi$ must be equal to $1_N$. This is clearly true if $\alpha=1$. Otherwise, we may write  $\varphi=\alpha 1_N +(1-\alpha)\psi$ where  $\psi:=(1-\alpha)^{-1}(\varphi -\alpha 1_N)\in \relTr{G}{N}$. The extremity of $\varphi$ implies that $\psi$, and therefore also $\varphi$, is equal to $1_N$.
\end{proof}

\subsection{The topology of the space of extreme points}
We recall that the set of extreme points of a metrizable Poulsen simplex is homeomorphic to an infinite-dimensional separable Banach space by \cite{los_78}. We immediately obtain: 
\begin{Prop}\label{prop:non-connected->not-Poulsen}
Let $A$ be a unital separable \csalg and assume that the set of extreme points of $\Traces{A}$ is not connected. Then $\Traces{A}$ is not a Poulsen simplex.
\end{Prop}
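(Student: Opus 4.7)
The plan is to argue by contradiction, assuming that $\Traces{A}$ is a Poulsen simplex and deriving from this that the set of extreme points must be connected, contradicting the hypothesis.

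First, I would observe that a disconnected space has at least two points, so under the assumption the simplex $\Traces{A}$ has at least two distinct extreme points and is in particular not a singleton. Combined with the defining property of a Poulsen simplex (density of extreme points), this forces $\Traces{A}$ to be infinite-dimensional: any finite-dimensional simplex has a closed finite set of extreme points (its vertices), which cannot be dense in the simplex unless the simplex is a single point. Moreover, since $A$ is separable, the weak-$*$ compact convex set $\Traces{A}$ is metrizable, so we are in the setting of an infinite metrizable Poulsen simplex.

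Next, I would invoke the Lindenstrauss--Olsen--Sternfeld uniqueness theorem \cite{los_78} recalled in the introduction: every infinite metrizable Poulsen simplex is affinely homeomorphic to the universal Poulsen simplex, whose set of extreme points is homeomorphic to a separable infinite-dimensional Banach space such as $\ell^2$. Since any normed vector space is path-connected (via straight-line paths), the set $\partial_e \Traces{A}$ would have to be connected, yielding the desired contradiction.

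I do not anticipate any real obstacle: once the result of \cite{los_78} on the topology of extreme points is taken as a black box, the only detail worth verifying is the trivial reduction ruling out the degenerate case of a finite-dimensional simplex, handled in the first paragraph above.
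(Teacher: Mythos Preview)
Your proposal is correct and follows essentially the same approach as the paper: the paper simply recalls, immediately before the proposition, that the extreme boundary of a metrizable Poulsen simplex is homeomorphic to an infinite-dimensional separable Banach space by \cite{los_78}, and states that the proposition follows immediately. Your argument is the same, only more explicit in ruling out the degenerate finite-dimensional case before invoking \cite{los_78}.
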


We deduce the following obstructions. 
\begin{Thm}\label{thm:free product of ET is not Poulsen}
Let $G_1,...,G_m$ be groups with property (ET), and let $G=G_1*...*G_m$ be their free product. Then $\Tr{G}$ is not a Poulsen simplex. The same is true if $G$ is  an amalgamated free product of the form $G=G_1*_H G_2$. 
\end{Thm}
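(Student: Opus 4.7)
The plan is to exploit the (ET) inheritance properties established earlier in this section, together with the topological obstruction from Proposition \ref{prop:non-connected->not-Poulsen}: since the extreme boundary of a Poulsen simplex is homeomorphic to an infinite-dimensional separable Banach space, it is in particular connected. So it suffices to show that $\Ch{G}$ is disconnected.

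First I would apply Proposition \ref{prop:property-ET-free-products} in the free-product case, or Corollary \ref{cor: amalgam-ET} in the amalgamated-product case, to conclude that $G$ itself has property (ET). By definition, this means $1_G$ is isolated in $\Ch{G}$, so $\{1_G\}$ is clopen.

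The remaining task is to exhibit at least one other point in $\Ch{G}$, for which I would invoke the canonical trace $\delta_e$ coming from the left-regular representation, namely $\delta_e(e)=1$ and $\delta_e(g)=0$ for $g\ne e$. Assuming that $G$ is non-trivial (the only non-vacuous case of the theorem, which is automatic provided the $G_i$ are non-trivial and, in the amalgamated setting, $H$ is a proper subgroup of each $G_i$), we have $\delta_e \ne 1_G$. Since $\Tr{G}$ is a Choquet simplex, $\delta_e$ is the barycenter of some probability measure on $\Ch{G}$, and this measure cannot be concentrated at $\{1_G\}$. Hence there exists $\chi \in \Ch{G}$ with $\chi \ne 1_G$.

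Combining the two ingredients, $\{1_G\}$ is a non-empty proper clopen subset of $\Ch{G}$, so $\Ch{G}$ is disconnected, and Proposition \ref{prop:non-connected->not-Poulsen} concludes the proof. I do not anticipate a significant technical obstacle: both the stability of (ET) under (amalgamated) free products and the connectedness criterion have already been established, and the only delicate point is bookkeeping the non-triviality hypotheses required to guarantee that $\Ch{G}$ contains a second point.
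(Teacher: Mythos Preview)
Your proposal is correct and follows essentially the same approach as the paper: apply Proposition~\ref{prop:property-ET-free-products} (or Corollary~\ref{cor: amalgam-ET}) to obtain property (ET) for $G$, and then invoke Proposition~\ref{prop:non-connected->not-Poulsen} via the isolated point $1_G$. Your explicit verification that $\Ch{G}$ contains a second point (using $\delta_e$) is a detail the paper leaves implicit, but it is indeed needed for the disconnectedness argument to go through in the stated generality.
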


\begin{proof}
By Proposition \ref{prop:property-ET-free-products} (or Corollary \ref{cor: amalgam-ET} in the second case)  $G$ has property (ET). Thus $\Ch{G}$ admits an isolated point and so $\Tr{G}$ is not Poulsen by Proposition \ref{prop:non-connected->not-Poulsen}. 
\end{proof}
Theorem \ref{thm:free product of ET is not Poulsen} in particular implies Theorem \ref{thm:intro amalgam not poulsen} from the introduction. We finish with the following additional obstruction applies.

\begin{Prop}\label{prop:normal-subgroup ET -> not Poulsen}
If a group $G$ admits a non-trivial normal subgroup with property (ET), then $\Tr{G}$ is not a Poulsen simplex.
\end{Prop}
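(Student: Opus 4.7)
The plan is to apply Proposition~\ref{prop:non-connected->not-Poulsen} by exhibiting a non-trivial clopen partition of $\Ch{G}$. Consider the subset
\[
X = \{\varphi \in \Ch{G} : \varphi|_N = 1_N \}
\]
of characters that factor through $G/N$. This is the set of extreme points of the closed face $Y = \{\varphi \in \Tr{G} : \varphi|_N = 1_N\}$ of $\Tr{G}$ ($Y$ is a face because $1$ is extreme in the closed unit disc). Several easy observations: $1_G \in X$ so $X$ is non-empty; $X$ is closed in $\Ch{G}$ by continuity of the restriction map $\varphi \mapsto \varphi|_N$; and $X$ is a proper subset of $\Ch{G}$, for otherwise Choquet integration would force every trace of $G$ to restrict to $1_N$ on $N$, contradicting the fact that the regular trace $\delta_e$ sends any non-identity element of the non-trivial $N$ to $0$.

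The essential step is showing that $X$ is \emph{open} in $\Ch{G}$. The plan is to reduce this to the following key lemma: for every $\varphi \in \Ch{G}$, the restriction $\varphi|_N$ lies in $\relCh{G}{N}$. Granted this, if $\varphi_k \in \Ch{G}$ converges to some $\varphi \in X$, then $\varphi_k|_N \in \relCh{G}{N}$ converges to $\varphi|_N = 1_N$. Since $1_N$ is isolated in $\relCh{G}{N}$ by Proposition~\ref{prop:property-ET-normal-subgroups}, we get $\varphi_k|_N = 1_N$ for all large $k$, whence $\varphi_k \in X$, as required.

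The main obstacle is the key lemma. To prove it, let $(\pi, \mathcal{H}, \xi)$ be the GNS representation of $\varphi$, so that $M := \pi(G)''$ is a finite factor with trace $\tau$, and set $N_0 := \pi(N)'' \subseteq M$. Normality of $N$ makes $N_0$ invariant under conjugation by $\pi(G)$, and factoriality of $M$ forces
\[
Z(N_0)^G \subseteq Z(N_0) \cap \pi(G)' \subseteq M \cap \pi(G)' = \mathbb{C}.
\]
Given a decomposition $\varphi|_N = \alpha \psi_1 + (1-\alpha)\psi_2$ with $\psi_i \in \relTr{G}{N}$ and $\alpha \in (0,1)$, each $\psi_i \le \alpha^{-1}\varphi|_N$ extends by normality to a $G$-invariant normal trace $\tilde{\psi}_i$ on $N_0$ dominated by $\alpha^{-1}\tau|_{N_0}$. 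The tracial Radon--Nikodym theorem then yields positive $a_i \in Z(N_0)$ with $\tilde{\psi}_i = \tau(a_i\, \cdot\,)$; uniqueness of the derivative combined with the $G$-invariance of $\tilde{\psi}_i$ places $a_i$ in $Z(N_0)^G = \mathbb{C}$, and the normalization $\tilde{\psi}_i(1) = 1$ gives $a_i = 1$. Hence $\psi_1 = \psi_2 = \varphi|_N$, proving that $\varphi|_N \in \relCh{G}{N}$ and thus completing the argument.
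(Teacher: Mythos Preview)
Your proof is correct and follows the same overall architecture as the paper: show that the preimage of $\{1_N\}$ under the restriction map $\Ch{G}\to\relCh{G}{N}$ is a non-trivial clopen subset, then invoke Proposition~\ref{prop:non-connected->not-Poulsen}. The paper obtains the key lemma (that $\varphi|_N\in\relCh{G}{N}$ for $\varphi\in\Ch{G}$) by citing Thoma \cite[Lemma~14]{thoma1964unitare}, and uses surjectivity of the restriction map \cite[Lemma~16]{thoma1964unitare} to see that the preimage is proper; you instead supply a self-contained GNS/Radon--Nikodym proof of the key lemma and replace the surjectivity citation by a direct Choquet argument against $\delta_e$. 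Both routes are equivalent in spirit, with yours trading external references for an explicit operator-algebraic argument.
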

\begin{proof}
Suppose $G$ admits such a normal subgroup $N$.
The restriction of $\varphi\in \Ch{G}$ to $N$ is an element of $\relCh{G}{N}$ \cite[Lemma 14]{thoma1964unitare}. We thus get a continuous map $r:\Ch{G}\to \relCh{G}{N}$, and this map is moreover surjective \cite[Lemma 16]{thoma1964unitare}. By Proposition \ref{prop:property-ET-normal-subgroups}, $1_N$ is an isolated point of $\relCh{G}{N}$. Since $N$ is non-trivial, $1_N$ is not the only element of $\relTr{G}{N}$ (e.g $\delta_e$), and thus by Krein-Milman, it is not the only element of $\relCh{G}{N}$. It follows that the preimage of $1_N$ under the map $r$ is a non-trivial clopen subset of $\Ch{G}$, and so $\Tr{G}$ is not Poulsen by Proposition \ref{prop:non-connected->not-Poulsen}. 
\end{proof}

\bibliography{Poulsen}
\bibliographystyle{alpha}

\end{document}